\documentclass{amsart}
\usepackage{mathrsfs}
\usepackage{fullpage}
\usepackage{caption}
\usepackage{latexsym,amsfonts,amssymb,epsfig,verbatim}
\usepackage{amsmath, latexsym, graphics, textcomp}
\usepackage{pinlabel}

\usepackage{url}
\usepackage{color}
\usepackage[colorlinks,citecolor=blue]{hyperref}
\usepackage{accents}
\usepackage{graphicx,enumerate}
\usepackage{tikz}
\usetikzlibrary{matrix,arrows,decorations.pathmorphing}
\usetikzlibrary{cd}

\theoremstyle{definition}
\newtheorem{theorem}[equation]{Theorem}
\newtheorem{proposition}[equation]{Proposition}

\newtheorem{corollary}[equation]{Corollary}
\newtheorem{lemma}[equation]{Lemma}
\newtheorem{claim}[equation]{Claim}
\newtheorem{definition}[equation]{Definition}

\newtheorem{remark}[equation]{Remark}

\newtheorem{question}[equation]{Question}

\numberwithin{equation}{section}

\newcommand{\set}[1]{\{ #1 \}}
\newcommand{\gen}[1]{\langle #1 \rangle}
\newcommand{\mcg}[1]{\text{Mod}(#1)}
\newcommand{\ZZ}{\mathbb{Z}}
\newcommand{\HH}{\mathbb{H}}
\newcommand{\cc}[1]{\mathcal{C}(#1)}
\newcommand{\ccs}[1]{\mathcal{C}^s(#1)}
\newcommand{\acc}[1]{\mathcal{AC}(#1)}
\newcommand{\ac}[1]{\mathcal{A}(#1)}
\newcommand{\hh}{\mathfrak{H}}
\newcommand{\ssp}[2]{\pi_{#1}(#2)}
\newcommand{\dec}[1]{\Delta_{#1}}

\newcommand{\frakh}{\mathfrak{h}}
\newcommand{\calH}{\mathcal{H}}
\newcommand{\calM}{\mathcal{M}}
\newcommand{\calN}{\mathcal{N}}
\newcommand{\calR}{\mathcal{R}}
\newcommand{\calS}{\mathcal{S}}
\newcommand{\zeroto}[1]{\{0,\ldots,#1\}}
\newcommand{\oneto}[1]{\{1,\ldots,#1\}}
\newcommand{\II}{\mathbb{I}}
\DeclareMathOperator{\id}{id}
\DeclareMathOperator{\diam}{diam}
\DeclareMathOperator{\len}{length}

\newcommand{\Aut}{\mathrm{Aut}}

\newcommand{\Homeo}{\mathrm{Homeo}}

\newcommand{\Stab}{\mathrm{Stab}}

\title{Pseudo-Anosov subgroups of surface bundles over tori}
\author{Junmo Ryang}
\address{Department of Mathematics, Rice University, Houston, TX}
\email{jr95@rice.edu}
\thanks{The author was supported in part by the NSF grant DMS-1745670.}

\begin{document}

\begin{abstract}
	We show that finitely generated, purely pseudo-Anosov subgroups of the fundamental groups of surface bundles over tori are convex cocompact as subgroups of the mapping class group via the Birman exact sequence. This generalizes the fact that similar groups within fibered 3-manifold groups are convex cocompact, which is a combination of results due to Dowdall, Kent, Leininger, Russell, and Schleimer.
\end{abstract}

\maketitle

\section{Introduction}
Farb and Mosher first defined convex cocompactness in mapping class groups \cite{MR1914566}, and this property has since been studied from a variety of perspectives \cite{hamenstadt2005word,MR2349677,MR2465691,MR2437226,MR3000500,MR3426695,MR4069890}. Convex cocompact subgroups must necessarily be finitely generated and purely pseudo-Anosov, but a major open question is whether the converse holds \cite[Question 1.5]{MR1914566}. If the converse does hold, then work of Farb-Mosher \cite{MR1914566} and Hamenst\"{a}dt \cite{hamenstadt2005word} implies that the fundamental groups of all compact atoroidal surface bundles are word hyperbolic; see also \cite{MR3000500}. If the converse does not hold, then one obtains some interesting examples of finitely generated groups with no Baumslag-Solitar subgroups that fail to be hyperbolic; see \cite[Section 8]{MR2342811}, \cite[Question 1.1]{bestvinaquestions}, \cite{MR1724853}, \cite{MR4526820}. There exist several partial results which show the converse does hold when restricted to particular subgroups of the mapping class group \cite{MR2599078,MR3314946,MR3695858,MR4308279,MR4632569,MR4684593,chesser2023purely}. In particular, the converse holds within fibered 3-manifold groups embedded in the mapping class group of a punctured surface via the Birman exact sequence \cite{MR2599078,MR3314946,MR4632569}. In the current work, we generalize the result to the fundamental groups of surface bundles fibering over an $n$-torus, where the $n=1$ case is precisely the fibered $3$-manifold case.

We give a more precise description. Let $S$ be a connected, orientable, finite-type surface of negative Euler characteristic, and let $E$ be a fiber bundle over $T^n$ with fiber $S$. Given a point $z \in S$, we write $S^z = S \setminus \set{z}$. The short exact sequence of fundamental groups associated to the fiber bundle $E$ maps into the Birman exact sequence \cite{MR0243519} via the monodromy representation $\mu$,

\[\begin{tikzcd}
    1 \ar{r} & \pi_1S \ar{r} \ar{d}{=} & \pi_1E \ar{r} \ar{d}{\mu^z} & \ZZ^n \ar{r} \ar{d}{\mu} & 1\phantom{.} \\
    1 \ar{r} & \pi_1S \ar{r} & \mcg{S^z,z} \ar{r}{\Phi_*} & \mcg{S} \ar{r} & 1.
\end{tikzcd}\]

\noindent See \cite[Section 1.2]{MR1914566}.

Let $\Gamma$ denote the image of $\pi_1E$ in $\mcg{S^z,z}$, the subgroup of $\mcg{S^z}$ consisting of mapping classes fixing the $z$-puncture. We now state the main theorem.
\begin{theorem}\label{thm:main}
    Suppose $\chi(S) < 0$ and $E$ is an $S$-bundle over $T^n$. A subgroup $G < \Gamma = \mu^z(\pi_1E)$ is convex cocompact if and only if it is finitely generated and purely pseudo-Anosov.
\end{theorem}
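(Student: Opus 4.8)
The forward implication uses nothing about $\Gamma$: convex cocompact subgroups of $\mcg{S^z}$ are finitely generated and purely pseudo-Anosov by definition and by the basic theory (Farb--Mosher, Kent--Leininger). So let $G < \Gamma$ be finitely generated and purely pseudo-Anosov; we must show $G$ is convex cocompact. By the characterization of convex cocompactness in terms of orbit maps to the curve complex (Kent--Leininger; Hamenst\"{a}dt) it suffices to show that an orbit map $G \to \cc{S^z}$ is a quasi-isometric embedding. As $\cc{S^z}$ is Gromov hyperbolic (Masur--Minsky), $G$ is finitely generated, and the orbit map is automatically coarsely Lipschitz, the whole problem is the lower bound $|g|_G \preceq d_{\cc{S^z}}(\alpha, g\alpha)$ for a fixed curve $\alpha$.

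I would study this bound through the exact sequence $1 \to \pi_1 S \to \Gamma \xrightarrow{\Phi_*} Q \to 1$, with $Q = \mu(\ZZ^n) < \mcg{S}$ finitely generated abelian, together with the $\Phi_*$-equivariant coarsely Lipschitz ``fill in the puncture'' map $\cc{S^z} \to \cc{S}$. There is a clean dichotomy on $Q$. If $Q$ contains a pseudo-Anosov then $Q$ is virtually cyclic (the centralizer of a pseudo-Anosov being virtually cyclic), the monodromy essentially factors through one $\ZZ$, and, up to finite index, $\Gamma = \pi_1 S \rtimes \gen{\tilde\psi}$ with $\tilde\psi$ projecting to a pseudo-Anosov --- i.e.\ $\Gamma$ is a fibered $3$-manifold group embedded via Birman, and the theorem is the known case recalled in the introduction (the surface-group case $Q = 1$ being likewise known). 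So the essential situation is that $Q$ is infinite and purely reducible: after passing to a finite-index subgroup it preserves its canonical reduction system $\sigma \subset S$ and acts on each complementary piece $W$ of $S \setminus \sigma$ with finite or virtually cyclic pseudo-Anosov image.

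In that case I would cut along $\sigma$ and reassemble. The puncture $z$ lies in one piece $W_0$; the relevant subsurfaces of $S^z$ are $S^z$ itself, the pieces $\hat W_i$ (with $W_0$ carrying the extra puncture $z$), and the annuli about the lifts of the curves of $\sigma$, and the plan is to control the $G$-orbit in $\cc{S^z}$ by the subsurface projections to these, via a hierarchy/combination argument in the spirit of Masur--Minsky and Mj--Sardar. On $\hat W_0$ the structure $\Gamma$ induces is again of the present type --- a $W_0$-bundle over a torus with a puncture --- but now with rank-at-most-one monodromy, hence a fibered $3$-manifold (or surface) group to which the known case applies; on the other pieces $\Gamma$ acts through $Q$ alone (point-pushing in $W_0$ having bounded projection to $\hat W_i$ for $i \ne 0$), with image virtually cyclic pseudo-Anosov or finite; and on the reduction annuli it acts by twisting. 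Pure pseudo-Anosovness of $G$ is exactly what forbids any of these coordinates from collapsing: were the lower bound to fail, one would extract --- after rescaling a family of ever-longer $G$-geodesics with sublinear image in $\cc{S^z}$ --- a non-filling lamination fixed by a nontrivial element of $G$, namely (by Kra's theorem) a non-filling pushing curve in the $\hat W_0$-fiber, a reducing curve persisting in $S^z$, or a twist region of unbounded length, contradicting the hypothesis.

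The main obstacle is this reassembly in the purely reducible case. In the fibered $3$-manifold case one leans on hyperbolic $3$-manifold technology --- tameness, Canary's covering theorem, geometric finiteness of Kleinian groups --- both to rule out ``virtual fiber'' behaviour of $G$ and to translate between $\HH^3$-geometry and curve-complex geometry; for $n \ge 2$ the total space is no longer a $3$-manifold, so this has to be rebuilt inside $\mcg{S^z}$ from subsurface projections, the bounded geodesic image theorem, the Kent--Leininger--Schleimer ``tree'' description of the point-pushing fibers of $\cc{S^z} \to \cc{S}$, and the reduction-system structure of $Q$. Two points need the most care: proving the combination theorem with uniformity sufficient to run the rescaling argument, and showing that the fiber intersection $G \cap \pi_1 S$ (equivalently, the kernel of $G$ onto its shadow in the base) is finitely generated --- or circumventing this by strengthening the conclusion to a statement about limit sets in $\mathcal{PML}(S^z)$. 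Granting these, checking that the resulting estimates assemble into a quasi-isometric embedding, and deducing convex cocompactness from it, are routine.
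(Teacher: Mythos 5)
Your forward implication and your reduction of the problem to the lower bound for the orbit map into $\cc{S^z}$ match the paper (Theorem \ref{thm:qi-ccc}), as does the dichotomy on the base: a free abelian $H$ of rank $\geq 2$ cannot contain a pseudo-Anosov, so the essential case is purely reducible monodromy with canonical reduction system $\alpha$, and the rank $\leq 1$ cases are the known results recalled in the introduction. But the core of your plan --- ``cut along $\sigma$ and reassemble'' via a combination theorem over the complementary pieces $\hat W_i \subset S^z$ --- has a structural flaw. The decomposition of $S^z$ along $\sigma$ is not $G$-invariant: the kernel of $\Gamma \to Q$ is all of $\pi_1 S$ acting by point-pushes, and pushing $z$ around a loop that crosses $\sigma$ changes the isotopy classes of the lifts of $\sigma$ in $S^z$. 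The preimage $\Phi^{-1}(\alpha) \subset \cc{S^z}$ is an infinite Bass--Serre tree $T$ (Theorem \ref{thm:tree-curve-complex}) on which $G$ acts freely with unbounded orbits, so there is no fixed finite list of subsurfaces of $S^z$ whose projections you can combine; in particular there is no well-defined ``$W_0$-bundle over a torus with a puncture'' substructure containing $G$, and no bounded-projection statement for point-pushes confined to $W_0$. What actually has to be proved is a diameter bound, uniform over all simplices $u \subset \cc{S^z}$, on the intersection $T^u \cap T^G$ of the $G$-invariant subtree with the subtree associated to the stabilizer $K_u$; this is Proposition \ref{prop:boundupstairs}, and it is the entire content of the paper.

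Two further points you flag as ``needing care'' are genuine obstructions that your outline does not resolve. First, $G_0 = G \cap \pi_1 S$ is typically infinitely generated ($\hh_G/G_0$ is an infinite-type surface carrying a cocompact $\ZZ^n$-action); the paper circumvents this not by strengthening the conclusion but by translating $T^u \cap T^G$ into the quotient $\sigma_0 = T^G/G_0$, constructing a compact core $\Sigma_1$ with finitely generated $G_1 = \pi_1\Sigma_1$, and reducing to ``deep'' simplices whose intersection projects into $\Sigma_1$. Second, your proposed rescaling/limiting argument (extracting a non-filling lamination fixed by an element of $G$ when the lower bound fails) does not obviously produce the uniform multiplicative and additive constants a quasi-isometric embedding requires; the paper instead proves explicit uniform bounds by decorating edges and vertices of $T^G$ with arcs in the annuli $A_j$ and subsurfaces $Y_k$, exploiting the coarsely diagonal, partly loxodromic action of $H$ on $\prod_j \ac{A_j} \times \prod_k \acc{Y_k}$ (Lemmas \ref{lem:diagonalaction} and \ref{lem:pjkbounded}), and showing $\sigma_0$ is quasi-isometric to a product of quotient graphs in which the relevant edge sets are bounded. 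As written, your proposal identifies the right ingredients but leaves the theorem's actual content unproved.
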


\subsection{Surface group extensions}
Although Theorem \ref{thm:main} is stated from a geometric point of view about surface bundles $E$, the content of the result applies most naturally to surface group extensions $\Gamma$ in punctured mapping class groups.

Given a subgroup $H < \mcg{S}$, let $\Gamma_H$ denote its full preimage under the puncture forgetting map $\Phi_*: \mcg{S^z,z} \to \mcg{S}$. This $\Gamma_H$ is a $\pi_1S$-extension which embeds naturally into the Birman exact sequence; as above.

\[
\begin{tikzcd}
    1 \ar{r} & \pi_1S \ar{r} \ar[phantom]{d}[marking]{=} & \Gamma_H \ar{r} \ar[phantom]{d}[marking]{<} & H \ar{r} \ar[phantom]{d}[marking]{<} & 1 \\
    1 \ar{r} & \pi_1S \ar{r} & \mcg{S^z,z} \ar{r}{\Phi_*} & \mcg{S} \ar{r} & 1
\end{tikzcd}
\]

When $H$ is an infinite cyclic group generated by a pseudo-Anosov element, Thurston's hyperbolization theorem gives that $\Gamma_H$ is isomorphic to the fundamental group of a finite volume, hyperbolic, fibered 3-manifold. In this setting, \cite{MR3314946} show that finitely generated, purely pseudo-Anosov subgroups $G < \Gamma_H$ are convex cocompact. If $H$ is instead generated by an infinite order reducible element, $\Gamma_H$ is then the fundamental group of a closed, non-hyperbolic, fibered 3-manifold. Despite this difference, \cite{MR4632569} show that finitely generated, purely pseudo-Anosov subgroups are convex cocompact in this setting as well. The case that $H$ is finite follows from \cite{MR2599078}. The following natural question then arises.

\begin{question} \label{q:main}
    For which groups $H$ do the extensions $\Gamma_H$ have the property that all finitely generated, purely pseudo-Anosov subgroups are convex cocompact?
\end{question}

We colloquially refer to this property as `the converse property'. As stated above, \cite{MR4632569} concluded that surface-by-cyclic $\Gamma_H$ have the converse property. We prove that surface-by-free-abelian extensions also have the converse property.

\begin{theorem} \label{thm:main2}
    Suppose $\chi(S) < 0$ and $H$ is a free abelian subgroup of $\mcg{S}$. A subgroup $G < \Gamma_H$ is convex cocompact if and only if it is finitely generated and purely pseudo-Anosov.
\end{theorem}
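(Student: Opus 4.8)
The plan is to prove the nontrivial (``if'') direction; the converse is standard, combining Farb--Mosher's observation that convex cocompact subgroups are finitely generated and purely pseudo-Anosov \cite{MR1914566} with the curve-complex criterion for convex cocompactness of Kent--Leininger \cite{MR2465691} and Hamenst\"adt \cite{hamenstadt2005word}. That criterion also reframes the goal: given $G < \Gamma_H$ finitely generated and purely pseudo-Anosov, it suffices to show that some orbit map $G \to \cc{S^z}$ is a quasi-isometric embedding.

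First I would normalize $H$. By the Birman--Lubotzky--McCarthy structure theory for abelian subgroups of $\mcg{S}$, and since convex cocompactness and the hypotheses on $G$ all pass to finite-index subgroups, we may assume $H$ fixes its canonical reduction multicurve $\sigma$ componentwise, fixes each component of $S \setminus \sigma$, restricts on each such component either to the trivial group or to an infinite cyclic group generated by a pseudo-Anosov of that component, and has its remaining generators supported as powers of Dehn twists along $\sigma$. In particular $\Gamma_H$ is the fundamental group of an $S$-bundle over $T^n$, so Theorem~\ref{thm:main2} recovers Theorem~\ref{thm:main} (a monodromy image with torsion being absorbed by the commensurability invariance of convex cocompactness).

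The argument is then an induction on $n = \operatorname{rank}(H)$. The cases $n \le 1$ are exactly the results assembled in the introduction: $\Gamma_H$ is virtually $\pi_1 S$ when $H$ is finite \cite{MR2599078}, and is a fibered $3$-manifold group when $H$ is infinite cyclic, treated in the pseudo-Anosov case by \cite{MR3314946} and in the reducible case by \cite{MR4632569}. For $n \ge 2$, choose $H' < H$ with $H' \cong \ZZ^{n-1}$; since $H$ is abelian $H' \lhd H$, so $\Gamma_{H'} \lhd \Gamma_H$ with $\Gamma_H / \Gamma_{H'} \cong \ZZ$, and lifting a generator gives a splitting $\Gamma_H = \Gamma_{H'} \rtimes \ZZ$ in which the new factor is generated by some $\tau \in \Gamma_H$. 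Composing $G \hookrightarrow \Gamma_H \to \ZZ$: if the image is trivial then $G < \Gamma_{H'}$ and the induction hypothesis applies directly; otherwise $G_1 := G \cap \Gamma_{H'} = \ker(G \to \ZZ)$ is purely pseudo-Anosov in $\Gamma_{H'}$, so it is convex cocompact by the induction hypothesis whenever it is finitely generated, and $G \cong G_1 \rtimes \ZZ$.

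The crux is to promote convex cocompactness of $G_1$ to that of $G$, which I would handle as the $n = 1$ step is handled in \cite{MR3314946, MR4632569}: convex cocompactness of $G_1$ in $\mcg{S^z}$ is equivalent, via \cite{MR1914566, hamenstadt2005word}, to Gromov hyperbolicity --- in the appropriate relative sense, because $S^z$ is punctured --- of the associated $\pi_1(S^z)$-extension $E_{G_1}$, and one must show that adjoining the automorphism induced by $\tau$ preserves this. By a combination theorem of Bestvina--Feighn / Mj--Sardar type, this reduces to an exponential flaring estimate in the $\tau$-direction, and its failure would force $G$ to contain a $\ZZ^2$, or an infinite-order element fixing an essential proper subsurface of $S^z$ --- either of which contradicts that $G$ is purely pseudo-Anosov. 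I expect two principal difficulties. First, for $n \ge 2$ there is a whole lattice $\ZZ^n$ of commuting flow directions rather than a single one, so the flaring input must be produced uniformly across all of them; this is where the normal form of $H$ relative to $\sigma$ enters, through control of subsurface projections along $T^n$-orbits in $\mcg{S^z}$ and the product-like geometry over the components of $S \setminus \sigma$. Second, $\ker(G \to \ZZ)$ --- and ultimately the fiber subgroup $G \cap \pi_1 S$ --- can genuinely be infinitely generated, so the induction does not apply to it verbatim; as in the cyclic case this is circumvented by exhausting $G$ by finitely generated subgroups with uniformly controlled convex-cocompactness constants, together with the classification of finitely generated purely filling subgroups of $\pi_1 S$ from \cite{MR2599078}.
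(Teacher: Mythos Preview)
Your proposal has a genuine structural gap: the induction on $n=\operatorname{rank}(H)$ does not close. The kernel $G_1 = G \cap \Gamma_{H'}$ of your map $G \to \ZZ$ is typically \emph{infinitely generated} (already in the $n=1$ base case, $G \cap \pi_1 S$ is almost never finitely generated), so the induction hypothesis simply does not apply to it. You acknowledge this and propose to exhaust $G_1$ by finitely generated subgroups ``with uniformly controlled convex-cocompactness constants,'' but the induction hypothesis only hands you constants depending on the particular finitely generated subgroup; obtaining \emph{uniform} constants across the exhaustion is precisely the content of the theorem, not a consequence of it. Likewise, the combination-theorem step presupposes that the $\pi_1(S^z)$-extension $E_{G_1}$ is (relatively) hyperbolic, which via Farb--Mosher/Hamenst\"adt is equivalent to $G_1$ being convex cocompact --- again requiring $G_1$ finitely generated. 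The claimed dichotomy ``failure of flaring forces a $\ZZ^2$ or a reducible element in $G$'' is asserted but not proved, and establishing it in this generality is tantamount to the main result.

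The paper's proof is not inductive. It works directly for all $n\ge 2$ by following the tree-intersection strategy of \cite{MR4632569}: one shows the orbit map $G \to \cc{S^z}$ is a quasi-isometric embedding by uniformly bounding $\diam(T^u \cap T^G)$ in the Bass--Serre tree $T$ dual to $p^{-1}(\alpha)$. The new idea for $n\ge 2$ is that the quotient spine $\sigma_0 = T^G/G_0$ (with $G_0 = G\cap\pi_1 S$) is quasi-isometric to a \emph{product} $\prod_{j\in J}\sigma_j \times \prod_{k\in K}\sigma_{k'}$ of rank-one quotients, and the subsurface-projection/decoration machinery bounds the image of $T^u\cap T^G$ factor by factor. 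This replaces the inductive step you are missing with a direct product decomposition that handles all $\ZZ^n$ directions simultaneously.
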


Given Theorem \ref{thm:main2}, finite index considerations allow us to immediately extend the result to surface-by-abelian and surface-by-solvable extensions.

\begin{lemma} \label{lem:finiteindex}
    Suppose $\chi(S) < 0$, $H$ is a subgroup of $\mcg{S}$, and $H' < H$ is a finite index subgroup. The extension group $\Gamma_H$ has the converse property if and only if $\Gamma_{H'}$ has the converse property.
\end{lemma}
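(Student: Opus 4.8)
The plan is to deduce the lemma from two ingredients: that $\Gamma_{H'}$ is a finite-index subgroup of $\Gamma_H$, and that convex cocompactness of a subgroup of $\mcg{S^z}$ is unaffected by passing between it and a finite-index overgroup. For the first, recall that $\Phi_*\colon\mcg{S^z,z}\to\mcg{S}$ is surjective with kernel $\pi_1 S$ and that $\Gamma_H=\Phi_*^{-1}(H)$, $\Gamma_{H'}=\Phi_*^{-1}(H')$ by definition. Since $1\in H'\leq H$, we get $\pi_1 S\leq\Gamma_{H'}\leq\Gamma_H$ with $\Gamma_H/\pi_1 S\cong H$ and $\Gamma_{H'}/\pi_1 S\cong H'$, whence $[\Gamma_H:\Gamma_{H'}]=[H:H']<\infty$.

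For the second ingredient I would invoke the orbit-map characterization of convex cocompactness of Kent-Leininger and Hamenst\"{a}dt: a finitely generated subgroup $G<\mcg{S^z}$ is convex cocompact if and only if some orbit map $G\to\cc{S^z}$ is a quasi-isometric embedding (equivalently, by Durham-Taylor, $G$ is a stable subgroup). This condition is manifestly a commensurability invariant. Indeed, if $G_0\leq G_1<\mcg{S^z}$ with $[G_1:G_0]<\infty$, then the inclusion $G_0\hookrightarrow G_1$ is a quasi-isometry, every element of $G_1$ lies within bounded word-distance of $G_0$, and the $G_0$- and $G_1$-orbits of a fixed vertex lie at finite Hausdorff distance in $\cc{S^z}$; hence the $G_1$-orbit map is a quasi-isometric embedding precisely when the $G_0$-orbit map is, and so $G_0$ is convex cocompact if and only if $G_1$ is. Alongside this I would record the elementary hereditary facts that subgroups of purely pseudo-Anosov groups are purely pseudo-Anosov, and that both finite-index subgroups and finite-index overgroups of finitely generated groups are finitely generated.

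Granting these, the proof is bookkeeping. For the forward direction, assume $\Gamma_H$ has the converse property and let $G<\Gamma_{H'}$ be finitely generated and purely pseudo-Anosov; then $G$ is also a finitely generated, purely pseudo-Anosov subgroup of $\Gamma_H$, hence convex cocompact, so $\Gamma_{H'}$ has the converse property. For the reverse direction, assume $\Gamma_{H'}$ has the converse property and let $G<\Gamma_H$ be finitely generated and purely pseudo-Anosov; set $G'=G\cap\Gamma_{H'}$, which has finite index in $G$ because $[\Gamma_H:\Gamma_{H'}]<\infty$, so $G'$ is finitely generated, and $G'$ is purely pseudo-Anosov as a subgroup of $G$. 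By the converse property for $\Gamma_{H'}$, the group $G'$ is convex cocompact, and therefore its finite-index overgroup $G$ is convex cocompact; hence $\Gamma_H$ has the converse property. The only non-formal step here is the commensurability invariance of convex cocompactness --- and in fact only its ``overgroup'' half is used --- so that is where I expect the substantive work to lie; everything else is the Birman exact sequence together with standard inheritance of finite generation. I would also flag the routine but worth-stating point that convex cocompactness is an intrinsic property of a subgroup of $\mcg{S^z}$, and does not refer to how the subgroup sits inside $\Gamma_H$ or $\Gamma_{H'}$, so transporting a subgroup between the two extensions does not change whether it is convex cocompact.
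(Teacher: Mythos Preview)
Your proof is correct and follows essentially the same approach as the paper: both argue that $[\Gamma_H:\Gamma_{H'}]<\infty$, reduce the nontrivial direction to the intersection $G'=G\cap\Gamma_{H'}$, and invoke the curve complex characterization of convex cocompactness to pass from $G'$ back to its finite-index overgroup $G$. You supply more detail than the paper (explicitly computing the index via the Birman sequence and spelling out why the orbit-map criterion is a commensurability invariant), but the logical skeleton is identical.
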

\begin{proof}
    Since $\Gamma_{H'} < \Gamma_{H}$, one direction of implication is obvious. For the other direction, we suppose that $\Gamma_{H'}$ has the converse property and then show that $\Gamma_{H}$ has it as well.
    
    Let $G < \Gamma_{H}$ be a finitely generated, purely pseudo-Anosov subgroup. Using the curve complex characterization of convex cocompactness, it is easy to see that $G$ is convex cocompact if and only if any finite index subgroup is convex cocompact; see Section \ref{sec:ccc}. Consider the subgroup $G' = G \cap \Gamma_{H'}$. Since $H' < H$ is finite index, we know $\Gamma_{H'} < \Gamma_{H}$ is finite index, and so $G' < G$ is finite index. Since $\Gamma_{H'}$ has the converse property, $G'$ is convex cocompact. Thus, $G$ is convex cocompact, as desired.
\end{proof}

\begin{corollary} \label{cor:main3}
    Suppose $\chi(S) < 0$ and $H$ is an abelian (or solvable) subgroup of $\mcg{S}$. A subgroup $G < \Gamma_H$ is convex cocompact if and only if it is finitely generated and purely pseudo-Anosov.
\end{corollary}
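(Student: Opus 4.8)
The plan is to deduce this corollary formally from Theorem~\ref{thm:main2} and Lemma~\ref{lem:finiteindex}, supplying as the only extra ingredient the structure theory of abelian and solvable subgroups of the mapping class group due to Birman--Lubotzky--McCarthy. As always, the forward implication (convex cocompact $\Rightarrow$ finitely generated and purely pseudo-Anosov) is Farb--Mosher and requires nothing new, so I would concentrate on the converse property. For that it is enough to find, inside any abelian or solvable $H < \mcg{S}$, a finite index \emph{free} abelian subgroup $H'$: Theorem~\ref{thm:main2} then gives that $\Gamma_{H'}$ has the converse property, and Lemma~\ref{lem:finiteindex} promotes this to $\Gamma_H$.

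First I would treat the abelian case. Birman--Lubotzky--McCarthy show that an abelian subgroup of $\mcg{S}$ is finitely generated, with rank bounded in terms of $\chi(S)$; hence $H \cong \ZZ^r \times F$ with $F$ finite, so the free part $H' \cong \ZZ^r$ has finite index in $H$. Then I would invoke Lemma~\ref{lem:finiteindex} for the pair $H' < H$ together with Theorem~\ref{thm:main2} for $\Gamma_{H'}$ to conclude that $\Gamma_H$ has the converse property.

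For the solvable case, I would again appeal to Birman--Lubotzky--McCarthy: every solvable subgroup of $\mcg{S}$ is finitely generated and virtually abelian. So I would pick a finite index abelian $H_1 < H$, then a finite index free abelian $H' < H_1$ exactly as above; since $H' < H$ is then finite index, Lemma~\ref{lem:finiteindex} and Theorem~\ref{thm:main2} finish the argument. I do not expect a genuine obstacle here — the entire geometric content sits in Theorem~\ref{thm:main2}, and the one point that could be delicate, namely that convex cocompactness is unaffected by passing to or from a finite index subgroup, is precisely what the proof of Lemma~\ref{lem:finiteindex} already establishes through the curve complex characterization. The only thing to be careful about is citing the correct form of the Birman--Lubotzky--McCarthy results (finite generation and virtual abelianness of solvable subgroups, not merely the Tits alternative).
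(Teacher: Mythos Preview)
Your proposal is correct and follows essentially the same argument as the paper: reduce to a finite index free abelian subgroup via the Birman--Lubotzky--McCarthy structure theory, then apply Theorem~\ref{thm:main2} and Lemma~\ref{lem:finiteindex}. The only cosmetic difference is that you separate the abelian and solvable cases while the paper treats them in one stroke.
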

\begin{proof}
    The results of \cite{MR0726319} and \cite{MR1195787} assert that every solvable group contains a finite index abelian subgroup, and moreover every abelian subgroup is finitely generated. Thus, if $H$ is solvable or abelian, then $H$ must contain a free abelian subgroup $H'$ of finite index. Theorem \ref{thm:main2} states that $\Gamma_{H'}$ has the converse property, so Lemma \ref{lem:finiteindex} gives that $\Gamma_{H}$ also has the converse property.
\end{proof}

Theorem \ref{thm:main} follows from Theorem \ref{thm:main2} and Corollary \ref{cor:main3}. To see the connection, note that $S$-bundles over an $n$-torus have surface-by-free-abelian fundamental groups. If such a bundle $E$ has injective monodromy representation, then $\pi_1E$ embeds into $\mcg{S^z,z}$ and its image $\Gamma$ has the converse property by Theorem \ref{thm:main2}. If $E$ instead has non-injective monodromy, then $\pi_1E$ maps with some nontrivial kernel to a surface-by-abelian $\Gamma < \mcg{S^z,z}$, which has the converse property by Corollary \ref{cor:main3}. For the remainder of the paper, we focus on proving Theorem \ref{thm:main2}.

\subsection{Proof Summary} \label{sec:proofsummary}
To prove Theorem \ref{thm:main2}, we fix a finitely generated, purely pseudo-Anosov subgroup $G < \Gamma_H$, and show that the orbit map of $G$ to the curve complex $\cc{S^z}$ is a quasi-isometric embedding. From \cite{MR2349677,MR2465691}, this is equivalent to $G$ being convex cocompact; see Theorem \ref{thm:qi-ccc}. So, the central task is to find a way to relate distances in $G$ to distances in $\cc{S^z}$. The works of \cite{MR2599078,MR3314946,MR4632569} provide examples of successful approaches to this kind of problem. These approaches also answer subcases of our question when $H$ has low rank.

Let $n$ be the rank of the free abelian group $H$. If $n=0$, $\Gamma_H$ is exactly the Birman kernel $\pi_1S$, so we recover the setting of \cite{MR2599078}, in which $G < \pi_1S$. In this setting, the authors of \cite{MR2599078} relate distances in $G$ to distances in $\cc{S^z}$ by examining $K_u$, the stabilizer in $\pi_1S < \mcg{S^z}$ of a simplex $u \subset \cc{S^z}$. Using the isometric action of $\pi_1S$ by deck transformations on the universal cover $p: \HH^2 \to S$, we define $\hh_u$ to be the convex hull of the limit set of $K_u$ in $\partial \HH^2$. Because $G < \pi_1S$, $G$ also has a convex hull $\hh_G$. Since $G$ acts on $\hh_G$ geometrically, $\hh_G$ serves as a geometric model for $G$. A key result in \cite{MR2599078} states that hull intersections $\hh_u \cap \hh_G$ have uniformly bounded diameter, independent of $u$. The simplices that make up a geodesic edge path between $G$-orbit points in $\cc{S^z}$ then give rise to a chain of bounded diameter sets in $\hh_G$. The total diameter of this chain bounds distance as a linear function of the distance in $\cc{S^z}$, as required for the quasi-isometric condition.

If instead $n=1$ with $H$ generated by a pseudo-Anosov mapping class, we recover the setting of \cite{MR3314946}, in which $\Gamma_H$ is isomorphic to the fundamental group of a hyperbolic $3$-manifold. The authors of \cite{MR3314946} adopt a similar approach to \cite{MR2599078} by taking convex hulls of $K_u$ and $G$ in $\HH^3$ rather than $\HH^2$. Again, the key result is that hull intersections $\hh_u \cap \hh_G$ are uniformly bounded. The same argument using a chain of bounded diameter sets in $\hh_G$ gives the quasi-isometric condition.

If again $n=1$ but $H$ is generated by a reducible mapping class, we recover the setting of \cite{MR4632569}, in which $\Gamma_H$ is isomorphic to the fundamental group of a non-hyperbolic $3$-manifold. In this setting, the authors of \cite{MR4632569} again find appropriate objects to take the place of the universal cover $\HH^2$ and convex hulls $\hh_u$ and $\hh_G$, and continue with a similar approach. Because $H$ is reducible, it has an associated canonical reduction system $\alpha$ on $S$. In place of $\HH^2$, \cite{MR4632569} use the tree $T$ dual to the lifts of $\alpha$ in $\HH^2$. Since $H$ is cyclic, it can be realized by homeomorphisms, so $\Gamma_H$ splits as a semi-direct product $\pi_1S \rtimes \gen{f}$ where $f$ is the representative of the generator of $H$. The isometric action of $\pi_1S$ on $\HH^2$ thus extends to a non-isometric action of $\Gamma_H$ by lifting $f$ to $\widetilde{f}:\HH^2 \to \HH^2$, which induces an isometric action of $\Gamma_H$ on the dual tree $T$. In place of convex hulls $\hh_u$ and $\hh_G$, \cite{MR4632569} use $K_u$-invariant subtrees $T^u \subset T$ and a $G$-invariant subtree $T^G \subset T$. Being purely pseudo-Anosov implies that $G$ acts freely on $T$, and a short argument proves $T^G$ is a geometric model of $G$. The key result is now that tree intersections $T^u \cap T^G$ are uniformly bounded, independent of $u$. The simplices of a geodesic edge path in $\cc{S^z}$ now correspond to a chain of bounded diameter sets in $T^G$, and so the quasi-isometric condition follows.

Now consider the remaining case where $n \geq 2$. If $H$ were irreducible, it would contain a pseudo-Anosov element whose centralizer in $\mcg{S}$ must contain $H$. This contradicts the fact that the centralizer of a pseudo-Anosov is virtually cyclic \cite{mccarthy1982normcent}. Thus, $H$ must necessarily be a reducible subgroup of $\mcg{S}$. Further, $H$ can also be realized by homeomorphisms; see Section \ref{sec:realizable}. Since the group $H$ in our setting maintains these key characteristics as in the setting of \cite{MR4632569}, we begin with a similar approach using the dual tree $T$ and the subtrees $T^u$ and $T^G$. However, the increased rank of $H$ demands that our argument diverge when proving the key result that tree intersections $T^u \cap T^G$ are uniformly bounded.

\subsubsection{Bounding $T^u \cap T^G$}
To understand $T^u \cap T^G$, we return to examining convex hulls. Although the action of $\Gamma_H$ on $\HH^2$ is non-isometric, it still induces an action on $\partial \HH^2$ by homeomorphisms. Thus, we again define the convex hull $\hh_G$, and note that $\hh_G$ admits an isometric action by $G_0 = G \cap \pi_1S$. Further, there is a $G_0$-equivariant inclusion $T^G \to \hh_G$, since $G_0$ acts freely on $T^G$. The quotient $p_0: \hh_G \to \hh_G / G_0 = \Sigma_0$ is an infinite-type surface, and $T^G / G_0 = \sigma_0$ is a spine. The surface $\Sigma_0$ admits a cocompact, non-isometric action by the quotient group $G/G_0$. While the action of $G/G_0$ on $\Sigma_0$ is not isometric, the induced action on the spine $\sigma_0$ is isometric; see Section \ref{sec:Gquotient}. In the $n=1$ case, $G/G_0 \cong H \cong \ZZ$, so $\Sigma_0$ admitting a cocompact $\ZZ$-action showed it was two-ended. In the $n \geq 2$ case, $G/G_0 \cong H \cong \ZZ^n$, so $\Sigma_0$ is one-ended.

To bound $T^u \cap T^G$ in the $n=1$ case, \cite{MR4632569} construct a compact subsurface $\Sigma_1 \subset \Sigma_0$, and focus on those simplices $u \subset \cc{S^z}$ with the property that $p_0(T^u \cap T^G) \subset \Sigma_1$. A bound on the tree intersections of these `deep' simplices extends to a bound on all simplices by leveraging the action of $G/G_0$. After reducing to deep simplices, \cite{MR4632569} divide $T^u \cap T^G$ into two subsets and bound each separately. The first subset, called the `hull subtree', is spanned by vertices dual to regions intersecting $\hh_u \cap \hh_G$. The hull subtree is bounded by appealing to \cite{MR2599078}, after noting that the compactness of $\Sigma_1$ implies $G_1$ is finitely generated. The second subset consists of components called `parallel subtrees'. It is shown that a long geodesic in a parallel subtree gives rise to both a long segment of a simple closed geodesic in $S$ and a long segment of a filling geodesic in $S$, and these two long segments must run parallel to each other. If the parallel segments are too long, then this is a contradiction, so geodesics in the parallel subtrees must necessarily be short, furnishing the bound.

In the $n \geq 2$ case, we mimic the 3-step process of 1) constructing $\Sigma_1$, 2) reducing to deep simplices, and 3) bounding the hull and parallel subtrees. The main difficulty lies in the first step. If $\Sigma_1$ is to be compact, we must show that $p_0(T^u \cap T^G)$ is uniformly bounded, independent of $u$. The corresponding argument in the $n=1$ case relies on the two-ended shape of $\Sigma_0$, so we need a new argument for the $n \geq 2$ case, which we outline in the following subsection. The second step of reducing to deep simplices only requires minor updates to accommodate the change from a $\ZZ$-action to a $\ZZ^n$-action on $\Sigma_0$. The third step requires no change, as the arguments in the $n=1$ case are agnostic to the rank of $H$ and the shape of $\Sigma_0$. See Section \ref{sec:secondreduction} for more details.

\subsubsection{Bounding $p_0(T^u \cap T^G)$}
To bound $p_0(T^u \cap T^G)$, the main tool used is the subsurface projection machinery of Masur and Minsky \cite{MR1791145}. For simplicity, we describe the idea in the case where $H$ is rank $2$, generated by two right-handed Dehn twists $h_1,h_2$ about disjoint, non-isotopic, simple, closed curves $\alpha_1,\alpha_2$.

Let $A_1, A_2 \to S$ be the annular covers whose core curve are $\alpha_1,\alpha_2$, respectively. For every simplex $u \subset \cc{S^z}$, let $\ssp{j}{v}$ be the subsurface projection of $v = \Phi(u) \subset \cc{S}$ to the arc graph $\ac{A_j}$ of the annular cover $A_j$; see Section \ref{sec:ssp}. For every edge $e \subset T^G$, there is a dual geodesic $\widetilde{\alpha}_e \subset p^{-1}(\alpha_j)$ for either $j=1 \text{ or } 2$, and we identify the annulus $A_j$ with the quotient $A_e = \HH^2 / \Stab_{\pi_1S}(\widetilde{\alpha}_e)$. There are two boundary components of $\partial \hh_G$ that non-trivially intersect $\widetilde{\alpha}_e$, and we let $\dec{e}$ denote their image in $A_j$, viewed as a subset of the arc graph $\ac{A_j}$; see Section \ref{sec:decorations}. These decorations $\dec{e}$ associated to edges $e \subset T^G$ are $G$-equivariant; see Lemma \ref{lem:decaction}.

An important fact proved by Leininger-Russell is that any edge $e$ with large distance between $\dec{e}$ and $\ssp{j}{v}$ in $\ac{A_j}$ cannot lie in the interior of a hull or parallel subtree. Such edges must serve as `dead ends' at the leaves of the hull or parallel subtrees or else lie outside of $T^u \cap T^G$; see Lemmas \ref{lem:Bparallel} and \ref{lem:Bhull}, or \cite[Lemmas 5.6 and 5.9]{MR4632569} for details. The proof of this fact is independent of the rank of $H$, so the fact remains true in our case. As a consequence, interior edges of $T^u \cap T^G$ have small distance between $\dec{e}$ and $\ssp{j}{v}$, with the exception of those edges for which $\ssp{j}{v} = \emptyset$. We collect the edges with small $d(\dec{e},\ssp{j}{v})$ in a set $\widetilde{\mathcal{E}} \subset T^G$ and consider its $p_0$-image $\mathcal{E} \subset \sigma_0$.

In the $n=1$ case, Leininger and Russell show that $\mathcal{E}$ is uniformly bounded, independent of $v$. Since parallel subtrees contain no edges with empty subsurface projection, this gives a bound on the $p_0$-image of parallel subtrees. The hull subtree may contain `gaps' of edges with empty subsurface projection, but these gaps can also be uniformly bounded to give a bound on the $p_0$-image of the hull subtree. Combining these bounds results in a bound on $p_0(T^u \cap T^G)$. See \cite[Section 5]{MR4632569}.

Returning to our case, $\mathcal{E}$ is no longer a bounded set, so the above approach cannot work. Instead, we partition $\mathcal{E}$ into the subsets $\mathcal{E}_1, \mathcal{E}_2$ which contain only edges dual to $\alpha_1, \alpha_2$, respectively. Further, we define subgroups $H_1,H_2 < H \cong G/G_0$ which act coarsely as the identity on $\ac{A_1},\ac{A_2}$, respectively, and we define quotient graphs $\sigma_1 = \sigma_0 / H_1$ and $\sigma_2 = \sigma_0 / H_2$ with quotient maps $p_1: \sigma_0 \to \sigma_1$ and $p_2: \sigma_0 \to \sigma_2$. While our partition sets $\mathcal{E}_j \subset \sigma_0$ are still unbounded, their images $p_j(\mathcal{E}_j) \subset \sigma_j$ are bounded. The bounds are obtained by leveraging the loxodromic actions of elements in $H/H_j$ on $\ac{A_j}$; see Lemma \ref{lem:pjkbounded}. Moreover, $\sigma_0$ is quasi-isometric to the product of graphs $\sigma_1 \times \sigma_2$; see Lemma \ref{lem:qitosigma0}. The final ingredient is the fact that geodesics in $T^G$ must periodically visit both $\alpha_1$-dual edges and $\alpha_2$-dual edges; see Corollary \ref{cor:Lbound}. Thus, geodesics in the $p_0$-image of parallel subtrees must be contained in a bounded neighborhood of the intersection $\mathcal{E}_1 \cap \mathcal{E}_2$. The image of the intersection in $\sigma_1 \times \sigma_2$ is bounded, and so $\mathcal{E}_1 \cap \mathcal{E}_2$ is bounded in $\sigma_0$. Geodesics in the hull subtree can be similarly bounded after an additional argument bounding the `gaps' of edges with empty subsurface projection. Combining the hull and parallel subtree bounds gives a bound on $p_0(T^u \cap T^G)$.

Our proof in the general case for $H$ with arbitrary rank and arbitrary reducible generators follows the same basic idea, using subsurface projections to complementary components $S \setminus \alpha$ to build decorations on vertices of $T^G$ in addition to the decorations on edges coming from the annular covers. The argument is complicated by the fact that arbitrary generators may not obviously align with a subsurface on which it acts loxodromically; see Section \ref{sec:main} for details.

\subsection{Acknowledgments}
The author would like to express great and heartfelt thanks to Chris Leininger and Jacob Russell for many enlightening conversations and support throughout the work. The author would also like to thank George Domat, Khanh Le, and Brian Udall for helpful conversations. Finally, the author would like to give general thanks to the fellow graduate students at Rice University for providing community and support.

\section{Preliminaries}
Let $S$ be a connected, orientable, finite-type surface with $\chi(S) < 0$. Fix a complete hyperbolic metric of finite area on $S$ that identifies $\HH^2$ with the universal cover $p: \HH^2 \to S$. Given a point $z \in S$, let $S^z$ denote the surface obtained by puncturing $S$ at $z$. The surface $S^z$ similarly admits a complete hyperbolic metric of finite area.

\subsection{Curve complexes and arc complexes}
The \emph{curve complex of $S$} is the flag simplicial complex $\cc{S}$ whose vertices are isotopy classes of essential, simple, closed curves on $S$ with two isotopy classes joined by an edge if they have disjoint representatives. Each vertex of $\cc{S}$ has a unique geodesic representative, and two vertices will be joined by an edge if and only if these geodesic representatives are disjoint. Hence, each simplex of $\cc{S}$ corresponds to a multicurve on $S$, which has a unique geodesic representative. Whenever convenient, we will assume that a simplex/multicurve $v \subset \cc{S}$ is represented in $S$ as a geodesic multicurve.

Given a surface with boundary $Y$, the \emph{arc and curve complex of $Y$} is the flag simplicial complex $\acc{Y}$ whose vertices are isotopy classes of both essential, simple, closed curves on $Y$ and essential arcs on $Y$ meeting the boundary $\partial Y$ precisely at their endpoints. As with the curve complex, two vertices of $\acc{Y}$ are joined by an edge if there are disjoint representatives for the isotopy classes.

When $S$ is a once-punctured torus or four-punctured sphere, one usually makes an alternate definition of $\cc{S}$, but we \underline{do not} do that here. In particular, we maintain that these curve complexes are discrete, countable sets. On the other hand, if $Y$ is a torus with one boundary component or a sphere with at least one boundary component and the sum of the boundary components and punctures equal $4$, then we \underline{do} take the usual alternate definition of $\acc{Y}$: vertices are now joined by an edge if they intersect once or twice (rather than zero times) for these two types of surfaces with boundary, respectively. The reason is that for $\cc{S}$, we need Theorem \ref{thm:tree-curve-complex} to hold, while for $\acc{Y}$, we will use coarse geometric properties in Section \ref{sec:main}.

If $A \to S$ is an annular cover, let $\overline{A}$ denote the compact annulus obtained from $A$ by adding its ideal boundary from the hyperbolic metric on $S$. This compactification is independent of the choice of metric. The \emph{arc complex} $\ac{A}$ is the flag simplicial complex whose vertices are isotopy classes of essential arcs on $\overline{A}$, where unlike other surfaces with boundary, isotopies of $\overline{A}$ are required to be the identity on $\partial \overline{A}$. Edges of $\ac{A}$ correspond to pairs of isotopy classes with representatives having disjoint interiors. The annuli of primary interest come from curves $\beta \in \cc{S}$. More precisely, every such curve $\beta$ determines a conjugacy class of cyclic subgroups of $\pi_1S$ and hence an annular cover (unique up to isomorphism) $A_\beta \to S$ for which $\beta$ lifts to the core curve.

We will often view curve complexes as metric spaces by making every simplex a regular Euclidean simplex with edge lengths $1$. We treat arc and curve complexes and arc complexes similarly. The $1$-skeleton of a curve complex is called the \emph{curve graph}. Analogously, we define the \emph{arc and curve graph} and the \emph{arc graph}.

\subsection{Mapping class group and the Birman exact sequence}
We recall that the mapping class group of $S$ is the group of orientation preserving homeomorphisms (or diffeomorphisms) of $S$, modulo the normal subgroup of those homeomorphisms that are isotopic to the identity,

\[
    \mcg{S} = \Homeo^{+}(S)/\Homeo_0(S).
\]
\noindent Every element of $\mcg{S}$ is thus the isotopy class of a homeomorphism.

Recall that we have fixed a basepoint $z \in S$, and $S^z = S \setminus \set{z}$. We write $\Phi: S^z \to S$ for the inclusion map. We refer to the puncture of $S^z$ that accumulates on $z$ via $\Phi$ as the \emph{$z$-puncture}, and $\Phi$ can be thought of as the map that `fills the $z$-puncture back in'.

Consider the finite index subgroup $\mcg{S^z,z} < \mcg{S^z}$ consisting of isotopy classes of homeomorphisms that fix the $z$-puncture. Any homeomorphism $\varphi: S^z \to S^z$ defining an element of $\mcg{S^z,z}$ uniquely determines a homeomorphism $\varphi': S \to S$ extending over the point $z$ by sending $z$ to itself and by the formula $\varphi' \circ \Phi = \Phi \circ \varphi$ on $S^z$. When the context makes the meaning clear, we abuse notation and use the same symbol $\varphi$ to denote the mapping class in $\mcg{S^z,z}$, a representative homeomorphism of $S^z$, as well as the unique extension to a homeomorphism of $S$.

The extension of a homeomorphism of $(S^z,z)$ over the point $z$ via the map $\Phi$ defines a surjective homomorphism $\Phi_*: \mcg{S^z,z} \to \mcg{S}$, and the \emph{Birman exact sequence} \cite{MR0243519} gives an isomorphism of the kernel of $\Phi_*$ with $\pi_1S$,

\[
\begin{tikzcd}
    1 \ar{r} & \pi_1S \ar{r} & \mcg{S^z,z} \ar{r}{\Phi_*} & \mcg{S} \ar{r} & 1.
\end{tikzcd}
\]

It will be useful to describe explicitly the isomorphism of the kernel of $\Phi_*$ with $\pi_1S$. If $\varphi: S^z \to S^z$ represents an element of the kernel, then the extension $\varphi: S \to S$ over the point $z$ is isotopic to the identity via an isotopy that need not preserve $z$. If $\varphi_t: S \to S$ is the isotopy so that $\varphi_0 = \varphi$ and $\varphi_1 = \id_S$, then defining $\gamma(t) = \varphi_t(z)$ gives a loop $\gamma$ based at $z$. The isomorphism of the kernel with $\pi_1S$ assigns the homotopy class of $\gamma$ to $\varphi \in \mcg{S^z,z}$. Alternatively, we can think of producing a homeomorphism $\varphi: S^z \to S^z$ by `pushing' $z$ around the loop $\gamma^{-1}$ by an isotopy on $S$; we call this the \emph{point push around $\gamma^{-1}$}.

Another perspective is useful in our setting. Fix a lift $\widetilde{z} \in p^{-1}(z)$. Any mapping class representative $\varphi: S^z \to S^z$ has a unique lift $\widetilde{\varphi}: \HH^2 \to \HH^2$ fixing $\widetilde{z}$. The lift $\widetilde{\varphi}$ is a quasi-isometry, and so has a unique extension to a homeomorphism $\partial \HH^2 \to \partial \HH^2$. Any other representative of the isotopy class of $\varphi$ in $\mcg{S^z,z}$ has the same extension to the boundary, since the lift of the isotopy moves all points a bounded hyperbolic distance. Thus, we obtain an action of $\mcg{S^z,z}$ on $\partial \HH^2$.

Next, observe that if $\varphi_0: S^z \to S^z$ represents an element in the kernel of $\Phi_*$, and $\varphi_t: S \to S$ is the isotopy to the identity. This isotopy lifts to an isotopy $\widetilde{\varphi}_t$ from the lift $\widetilde{\varphi}_0$ fixing $\widetilde{z}$ to a lift of the identity. The resulting lift of the identity $\widetilde{\varphi}_1$ is thus a covering transformation, namely the one associated to the homotopy class of $\gamma$ (where $\gamma(t) = \varphi_t(z)$ as defined above). Thus, we have the following proposition.

\begin{proposition}[\cite{MR2851869,MR4632569}]
    The restriction of the action of $\mcg{S^z,z}$ on $\partial \HH^2$ to $\pi_1S$ agrees with the extension of the isometric covering action of $\pi_1S$ on $\HH^2$.
\end{proposition}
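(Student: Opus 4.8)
The plan is to read off the $\mcg{S^z,z}$-action on $\partial\HH^2$, restricted to the kernel of $\Phi_*$, directly from the lifted-isotopy picture set up in the two paragraphs preceding the statement, and to recognize the resulting boundary homeomorphism as the one coming from a deck transformation. Concretely, fix $\varphi_0:S^z\to S^z$ representing a class in $\ker\Phi_*$, let $\varphi_t:S\to S$ be an isotopy with $\varphi_1=\id_S$, set $\gamma(t)=\varphi_t(z)$, and let $[\gamma]\in\pi_1 S$ be the element assigned to $\varphi_0$ by the Birman isomorphism. By definition, the action of $\varphi_0$ on $\partial\HH^2$ is the boundary extension of the unique lift $\widetilde\varphi_0$ fixing $\widetilde z$.

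First I would lift the isotopy $\varphi_t$ to an isotopy $\widetilde\varphi_t$ of $\HH^2$ beginning at $\widetilde\varphi_0$. Since $\varphi_1=\id_S$, the endpoint $\widetilde\varphi_1$ is a lift of the identity, hence a covering transformation $T$; and the arc $t\mapsto\widetilde\varphi_t(\widetilde z)$ runs from $\widetilde z$ to $T(\widetilde z)$ and projects to $\gamma$, so under the identification of $\pi_1(S,z)$ with the deck group determined by the basepoint $\widetilde z$, the transformation $T$ is exactly the covering transformation of $[\gamma]$. This is precisely the observation recorded in the paragraph before the statement; here I would simply make the basepoint conventions explicit.

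Second I would invoke that the lifted isotopy $\widetilde\varphi_t$ moves every point of $\HH^2$ a uniformly bounded hyperbolic distance — the same fact used above to see that the boundary action is well defined on mapping classes, and available here because elements of $\ker\Phi_*$ are point pushes, which admit isotopies to the identity with uniformly bounded tracks. In particular $\widetilde\varphi_0$ and $\widetilde\varphi_1=T$ lie within bounded distance of one another, so they induce the same homeomorphism of $\partial\HH^2$. Hence the action of $\varphi_0$ on $\partial\HH^2$ equals the boundary extension of the deck transformation $T$ of $[\gamma]$, which is exactly the value at $[\gamma]$ of the extended covering action. Letting $\varphi_0$ range over $\ker\Phi_*\cong\pi_1 S$ then gives the asserted equality of the two $\pi_1 S$-actions on $\partial\HH^2$.

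I expect the only real obstacle to be bookkeeping. The two appearances of ``$\pi_1 S$'' — the Birman kernel, and the deck group — must be identified with $\pi_1(S,z)$ compatibly, and one must decide whether to track $\gamma$ or $\gamma^{-1}$ and whether each identification is a homomorphism or an anti-homomorphism; the parenthetical \emph{point push around $\gamma^{-1}$} is precisely the normalization making the two actions agree on the nose rather than differ by inversion. None of this is deep, but it is where a sign slip would hide, so I would pin down these conventions before asserting the identification of $T$ with the deck transformation of $[\gamma]$ in the second step.
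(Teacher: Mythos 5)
Your proposal is correct and follows essentially the same route as the paper, which establishes this proposition precisely by lifting the isotopy $\varphi_t$ to $\widetilde{\varphi}_t$, identifying $\widetilde{\varphi}_1$ as the covering transformation of $[\gamma]$, and using the fact that the lifted isotopy moves points a bounded hyperbolic distance to conclude that $\widetilde{\varphi}_0$ and this deck transformation have the same boundary extension. Your explicit attention to the basepoint and $\gamma$ versus $\gamma^{-1}$ conventions is a reasonable refinement of the same argument, not a different one.
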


Kra's Theorem \cite{MR0611385} describes precisely which elements of $\pi_1S$ represent pseudo-Anosov elements of $\mcg{S^z,z}$. Recall that a loop is \emph{filling} if it cannot be homotoped to be disjoint from any essential simple closed curve. (Thus, it is a property of the homotopy class.)

\begin{theorem}[\cite{MR0611385}] \label{thm:kra}
    An element of $\pi_1S$ represents a pseudo-Anosov element of $\mcg{S^z,z}$ if and only if it is represented by a filling loop.
\end{theorem}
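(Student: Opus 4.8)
The plan is to establish both implications via the Nielsen–Thurston classification applied to the mapping class $P_\gamma \in \mcg{S^z,z}$ obtained by point-pushing $z$ around a loop $\gamma$, using the $\pi_1 S$-action on $\partial\HH^2$ recorded in the proposition above. For the contrapositive of the ``only if'' direction, suppose $\gamma \in \pi_1 S$ is \emph{not} filling. Then $\gamma$ can be homotoped off some essential simple closed curve $c$ on $S$. The point-push around $\gamma^{-1}$ can then be supported in the complement of (a representative of) $c$, so the isotopy class of $c$, regarded now as a curve on $S^z$, is fixed by $P_\gamma$. A mapping class fixing an essential simple closed curve is reducible (or the curve is a puncture-parallel curve enclosing $z$, in which case $P_\gamma$ is a multiple of a point-push that is again reducible), hence not pseudo-Anosov. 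One must be slightly careful with the degenerate cases where $\gamma$ is peripheral or a proper power; if $\gamma$ bounds a once-punctured disk or is trivial then $P_\gamma$ is trivial or a puncture-twist, and if $\gamma$ is a proper power then $P_\gamma$ has a power fixing a curve — in all cases $P_\gamma$ fails to be pseudo-Anosov.

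For the ``if'' direction, suppose $\gamma$ is filling; I would show $P_\gamma$ has no periodic reduction system and no finite-order behavior, hence is pseudo-Anosov. Since $\gamma$ is filling, it is in particular primitive and non-peripheral, so $P_\gamma$ has infinite order (the isomorphism $\ker\Phi_* \cong \pi_1 S$ sends $P_\gamma$ to $\gamma$, which has infinite order). Suppose toward a contradiction that $P_\gamma$ is reducible, with reduction multicurve $\sigma$ on $S^z$. Some power $P_\gamma^k = P_{\gamma^k}$ fixes each component of $\sigma$. Filling together with primitivity passes to $\gamma^k$ being filling as well, so without loss of generality $P_\gamma$ itself fixes a component $c$ of $\sigma$. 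The key point is to translate ``$P_\gamma(c) = c$ on $S^z$'' into a statement on $S$ and derive a contradiction with fillingness. Here I would use the action on $\partial \HH^2$: lift $c$ to $\HH^2$; its endpoints are the fixed points of a primitive hyperbolic element $\delta \in \pi_1 S$ (a lift of $c$). That $P_\gamma$ fixes $c$ up to isotopy means the lift $\widetilde{P_\gamma}$ fixing $\widetilde z$ conjugates the pair $\{\delta\text{-axis endpoints}\}$ to the endpoints of another lift of $c$, i.e. to the fixed-point pair of $g\delta g^{-1}$ for some $g \in \pi_1 S$. But by the proposition the boundary action of $P_\gamma$ restricted along the kernel is realized by... — more directly, the geometric intersection number is an invariant: $i(P_\gamma(c), c) = 0$. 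Computing $i(P_\gamma(c),c)$ in terms of $\gamma$ and $c$: pushing $z$ around $\gamma$ drags $c$ across $z$ each time $\gamma$ crosses $c$, so $i(P_\gamma(c),c)$ is bounded below by (and morally equal to) twice the geometric intersection number $i(\gamma, c)$ on $S$. Since $\gamma$ is filling, $i(\gamma,c) > 0$ for every essential simple closed curve $c$, forcing $i(P_\gamma(c),c) > 0$, contradicting that $c$ is fixed by $P_\gamma$. The same intersection-number computation rules out $P_\gamma$ being finite order (a finite-order map would fix, up to isotopy, some multicurve or have all orbits of curves with controlled intersection, again impossible when $\gamma$ is filling). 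By Nielsen–Thurston, $P_\gamma$ is pseudo-Anosov.

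The main obstacle I anticipate is making the intersection-number estimate $i(P_\gamma(c), c) \geq 2\,i(\gamma, c)$ rigorous, including handling the subtlety that after filling the puncture back in a curve which is essential on $S^z$ might become inessential on $S$ (namely a loop around $z$), and conversely isotopies on $S$ through $z$ versus on $S^z$ are genuinely different. The clean way to handle this is to work entirely on $S^z$ with a hyperbolic metric, represent $c$ by its geodesic, represent $P_\gamma(c)$ by pushing along a geodesic representative of $\gamma$, and count intersections directly; alternatively one can invoke the explicit formula relating point-pushing to intersection number (as in Farb–Margalit's treatment of point-pushing maps) that for the push $P_\gamma$ one has $i(P_\gamma(a), a) = \text{(something positive whenever } \gamma \text{ crosses } a)$. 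I would cite the standard reference for this point-pushing intersection computation rather than rederive it, then feed the inequality into the fillingness hypothesis. The degenerate cases ($\gamma$ peripheral, trivial, or a proper power) should be dispatched at the outset so the main argument runs cleanly with $\gamma$ primitive, non-peripheral, and filling.
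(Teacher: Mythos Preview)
The paper does not prove Kra's theorem; it is stated as a cited result from \cite{MR0611385}, followed only by the one-sentence remark that ``the point pushing description of Birman's isomorphism suggests a proof'' and a pointer to \cite{MR2850125}. Your proposal is therefore far more detailed than anything the paper offers, and it follows exactly the heuristic the paper gestures at.

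Your ``only if'' direction is correct and standard. For the ``if'' direction, your overall strategy via Nielsen--Thurston is sound, but the specific inequality $i(P_\gamma(c), c) \geq 2\,i(\gamma, c)$ is not a standard formula and is unlikely to hold as stated (you rightly flag this as the main obstacle). A cleaner route---and the one implicit in the paper's own setup around Theorem~\ref{thm:tree-curve-complex}---is the stabilizer description: for a surviving curve $c \subset \ccs{S^z}$, the stabilizer $K_c < \pi_1 S$ is exactly the fundamental group of the component of $S \setminus \Phi(c)$ containing $z$, so $P_\gamma$ fixes $c$ if and only if $\gamma$ can be homotoped disjoint from $\Phi(c)$, impossible when $\gamma$ is filling. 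For a non-surviving curve $c$ (bounding a twice-punctured disk in $S^z$ containing the $z$-puncture and some puncture $p$ of $S$), the stabilizer in $\pi_1 S$ is the cyclic group generated by the peripheral loop around $p$, which is never filling. This handles both cases without any intersection-number estimate.
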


Since being pseudo-Anosov is equivalent to not having any isotopy classes of periodic simple closed curves, the point pushing description of Birman's isomorphism suggests a proof of Theorem \ref{thm:kra}; see \cite{MR2850125}.

\subsection{Fibers and trees}
Let $\ccs{S^z} \subset \cc{S^z}$ denote the subcomplex spanned by curves whose image under $\Phi: S^z \to S$ is an essential curve on $S$. We call the vertices of $\ccs{S^z}$ the surviving curves of $S^z$. Since $\Phi$ maps disjoint curves to disjoint curves, it induces a simplicial, surjective map, which we also denote $\Phi: \ccs{S^z} \to \cc{S}$ by an abuse of notation. Give any simplex $v \subset \cc{S}$, we let $\Phi^{-1}(v)$ denote the preimage of the barycenter of $v$.

For any simplex $\alpha \subset \cc{S}$, let $T_\alpha$ denote the \emph{Bass-Serre tree dual to $p^{-1}(\alpha)$ in $\HH^2$}. More precisely, $T_\alpha$ contains a vertex for each component of $\HH^2 \setminus p^{-1}(\alpha)$, and this vertex and component of $\HH^2 \setminus p^{-1}(\alpha)$ are said to be \emph{dual} to each other. Two vertices $t_1$ and $t_2$ are connected by an edge if and only if the closures of the components dual to $t_1$ and $t_2$ intersect along some component of $p^{-1}(\HH^2)$, and this edge and component of $p^{-1}(\alpha)$ are said to be \emph{dual} to each other.

We have the following useful theorem relating fibers of $\Phi$ and Bass-Serre trees.
\begin{theorem}[\cite{MR2599078}] \label{thm:tree-curve-complex}
    For any simplex $v \subset \cc{S}$, there is a $\pi_1S$-equivariant homeomorphism from the Bass-Serre tree $T$ dual to $v$ to $\Phi^{-1}(v) \subset \ccs{S^z}$. The image of a vertex $t \in T$ under this homeomorphism is the barycenter of a simplex $u_t \subset \ccs{S^z}$ for which $\Phi(u_t) = v$ and $\Phi|_{u_t}$ is injective. Moreover, $t,t' \in T$ are joined by an edge if and only if $u_t \cup u_{t'}$ span a simplex of $\ccs{S^z}$.
\end{theorem}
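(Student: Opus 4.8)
The plan is to build the required homeomorphism explicitly, vertex by vertex on $T$, reducing the real content to the case of a single curve. Realize $v$ by its geodesic multicurve $\hat v$ and, after a small isotopy if necessary, arrange that $z\notin\hat v$; fix the lift $\widetilde z\in p^{-1}(z)$ used to define the action of $\mcg{S^z,z}$ on $\partial\HH^2$, let $C_t$ be the component of $\HH^2\setminus p^{-1}(\hat v)$ dual to a vertex $t$ of $T$, and let $t_0$ be the vertex with $\widetilde z\in C_{t_0}$. I would first treat $v=\gamma$ a single curve: here $T_\gamma$ is the Bass--Serre tree of the one-edge splitting of $\pi_1S$ along $\gamma$, and the point-push subgroup $\pi_1S=\ker\Phi_*$ acts on the set of surviving curves of $S^z$ lying over $\gamma$. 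Starting from the obvious such curve $\gamma$ itself (which is essential in $S^z$), orbit--stabilizer identifies this set $\pi_1S$-equivariantly with the vertex set of $T_\gamma$: the stabilizer of $\gamma$ is exactly the vertex group $\pi_1(S\setminus\gamma)$ of the complementary component containing $z$, and the remaining $\pi_1S$-orbit, when $\gamma$ separates, accounts for the vertices on the other side. One then checks that two surviving curves over $\gamma$ are disjoint in $S^z$ precisely when the corresponding vertices are adjacent in $T_\gamma$. For a general simplex $v$, each component $\gamma$ of $v$ gives a collapse map $T\to T_\gamma$, and I set $u_t=\{\delta^t_\gamma\}$, where $\delta^t_\gamma$ is the surviving curve over $\gamma$ named by the image of $t$ in $T_\gamma$; then $\Phi(u_t)=v$ with $\Phi|_{u_t}$ injective, so $\mathrm{bary}(u_t)\in\Phi^{-1}(v)$, and the assignment is $\pi_1S$-equivariant by naturality of all the ingredients.

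The remaining steps are verifications. (i) The curves $\delta^t_\gamma$, as $\gamma$ ranges over the components of $v$, are pairwise disjoint, so $u_t$ spans a simplex of $\ccs{S^z}$; this reduces to the single-curve disjointness statement together with the observation that modifying one component near $z$ creates no intersection with another. (ii) The map $t\mapsto\mathrm{bary}(u_t)$ is injective, since distinct components $C_t$ already give distinct images in some $T_\gamma$. (iii) If $t,t'$ are adjacent in $T$, their connecting edge is dual to a lift of a single component $\gamma_*$, so $\delta^t_\gamma=\delta^{t'}_\gamma$ for $\gamma\neq\gamma_*$ while $\delta^t_{\gamma_*},\delta^{t'}_{\gamma_*}$ are adjacent vertices of $T_{\gamma_*}$ and hence disjoint; thus $u_t\cup u_{t'}$ is a multicurve whose vertices span a simplex, and the subset of that simplex mapping to $\mathrm{bary}(v)$ is a single arc, which I declare the image of the edge $tt'$. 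Assembling (i)--(iii) and checking that the simplices of $\ccs{S^z}$ with image $v$ are exactly the $u_t$ (the injective case) and the $u_t\cup u_{t'}$ (the once-doubled case) shows that the image of $T$ fills all of $\Phi^{-1}(v)$ and that the map is a simplicial isomorphism onto it; continuity of the map and its inverse is then automatic.

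The main obstacle is the converse half of (iii) together with surjectivity: that $u_t\cup u_{t'}$ spanning a simplex of $\ccs{S^z}$ forces $t,t'$ adjacent, and that every surviving multicurve over $v$ is some $u_t$. Both reduce to the single-curve statement that two surviving curves over $\gamma$ lying at $T_\gamma$-distance at least two must intersect essentially in $S^z$ --- and this is where the puncture genuinely enters, since all such curves are isotopic in $S$. I would prove it by lifting to the regular cover $\HH^2\setminus p^{-1}(z)\to S^z$: a surviving curve over $\gamma$ lifts to a properly embedded line invariant under a $\pi_1S$-conjugate of $\langle\gamma\rangle$ prescribed by its $T_\gamma$-vertex, and if two such lines correspond to non-adjacent vertices then the points of $p^{-1}(z)$ trapped between them obstruct any isotopy taking one projected curve off the other; for a multicurve, an edge-path of length $\ge 2$ from $t$ to $t'$ similarly forces two constituent curves of $u_t$ and $u_{t'}$ to cross. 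With this in hand, an arbitrary surviving multicurve $u$ over $v$ decomposes as $u=\{\delta_\gamma\}$ by injectivity of $\Phi|_u$, each $\delta_\gamma$ names a vertex of $T_\gamma$, and pairwise disjointness of the $\delta_\gamma$ is exactly what makes these choices jointly realized by a single vertex $t$ of $T$, giving $u=u_t$. (For the once-punctured torus and four-punctured sphere the discrete convention on $\cc{S}$ is used precisely to keep this bookkeeping valid.)
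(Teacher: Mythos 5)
First, a caveat on the comparison: the paper does not prove Theorem~\ref{thm:tree-curve-complex} --- it is quoted from \cite{MR2599078}, and the surrounding text only sketches the machinery that proof uses (the stabilizers $K_u$, the hulls $\hh_u$, and the characterization of $K_u$ as the point pushes along loops contained in the puncture-side component $U$). Your architecture --- reduce to one curve, define $u_t$ via the collapse maps $T\to T_\gamma$, then verify adjacency and surjectivity --- is viable and close in spirit to \cite{MR2599078}, but the step you correctly identify as the crux is exactly where your argument is not yet a proof.

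The main gap is the claim that two surviving curves over $\gamma$ at $T_\gamma$-distance at least $2$ must intersect essentially, which you propose to establish by counting points of $p^{-1}(z)$ ``trapped between'' the two lifts. When $\gamma$ is separating, with $z$ in the component $S_1$ of $S\setminus\gamma$, every component of $\HH^2\setminus p^{-1}(\gamma)$ lying over the other component $S_2$ contains \emph{no} point of $p^{-1}(z)$, and $T_\gamma$ is bipartite between the two types of vertex; a length-two geodesic joins two $S_1$-type vertices through an $S_2$-type region, so nothing is trapped and your obstruction vanishes precisely in the first nontrivial case. The statement is still true, but it needs a different mechanism. One that works uniformly: if $\delta,\delta'$ are disjoint and non-isotopic in $S^z$, they cobound an annulus in $S$ containing $z$, and a boundary-parallel circle of that annulus missing $z$ lies in $U_\delta\cap U_{\delta'}$, so $K_\delta\cap K_{\delta'}$ contains a nontrivial element conjugate to a power of $[\gamma]$; this element fixes both vertices of $T_\gamma$, hence every edge of the geodesic between them, but distinct components of $p^{-1}(\gamma)$ have infinite cyclic stabilizers (distinct axes) meeting trivially, so that geodesic has at most one edge. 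This simultaneously gives ``non-adjacent $\Rightarrow$ intersecting'' and pins down the adjacency relation.

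Two further points need real arguments rather than assertions. Your orbit--stabilizer identification silently assumes the point-push action is transitive on the surviving curves over $\gamma$ lying ``on each side,'' and your surjectivity step then invokes that same bijection (``each $\delta_\gamma$ names a vertex of $T_\gamma$''), so transitivity is never actually established; it must be proved directly (e.g., by dragging $z$ along the trace of an ambient isotopy of $S$ carrying one curve to the other and closing up inside the common complementary component). And in step (i), the pairwise disjointness of $\delta^t_\gamma$ and $\delta^t_{\gamma'}$ for distinct components $\gamma\neq\gamma'$ of $v$ is not a matter of ``modifying one component near $z$'': each $\delta^t_\gamma$ is a point-push image of $\gamma$ and can be an arbitrarily complicated curve, and two curves individually isotopic in $S$ to disjoint curves need not be disjoint in $S^z$. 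This joint realizability is most naturally obtained from the hull description of the $K_{\delta^t_\gamma}$ that the paper sketches, which is a reason the proof in \cite{MR2599078} is organized around $\hh_u$ rather than around the single-curve trees alone.
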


The proof of Theorem \ref{thm:tree-curve-complex} involves some ideas that will be useful for us, which we briefly describe. Given a simplex $u \subset \cc{S^z}$, we let $K_u$ denote the stabilizer of $u$ in $\pi_1S < \mcg{S^z,z}$ and let $\hh_u \subset \HH^2$ denote the convex hull of the limit set of $K_u$ in $\partial \HH^2$ (if it is non-empty). If $u \subset \ccs{S^z}$, $v = \Phi(u)$, and $\Phi \mid_u$ is injective, then $p: \HH^2 \to S$ maps the interior $\hh_u^\circ \subset \hh_u$ to a component of $S \setminus v$ (where $v$ is realized by its geodesic representative). Up to isotopy, $p(\hh_u^\circ)$ is the $\Phi$-image of the component $U \subset S^z \setminus u$ containing the $z$-puncture. One way to think about this fact is that point pushing around a loop preserves $u$ precisely when the loop is disjoint from $u$, that is, when the loop (intersected with $S^z$) is contained in $U$. When $\Phi \mid_u$ is not injective, the component of $S^z \setminus u$ containing the $z$-puncture is a once-punctured annulus, making $K_u$ an infinite cyclic group. In any case, the stabilizer of $\hh_u$ is exactly $K_u$; see \cite{MR2599078}.

\subsection{Convex cocompactness} \label{sec:ccc}
Farb and Mosher originally defined convex cocompactness in the mapping class group using the action on Teichm\"{u}ller space; see \cite{MR1914566}. For our purposes, it will be more convienient to use the following alternative formulation due to Kent-Leininger and independently Hamenst\"{a}dt.

\begin{theorem}[\cite{MR2465691,MR2349677}] \label{thm:qi-ccc}
    A finitely generated subgroup $G$ of $\mcg{S}$ is convex cocompact if and only if the orbit map $G \to G \cdot u$ is a quasi-isometric embedding into the curve complex $\cc{S}$.
\end{theorem}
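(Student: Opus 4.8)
The plan is to deduce this from three pieces of standard machinery: the coarse geometry of $\cc{S}$ due to Masur and Minsky (Gromov hyperbolicity; the systole projection $\pi\colon\mathcal T(S)\to\cc{S}$ is coarsely Lipschitz and carries Teichm\"uller geodesics to unparametrized quasigeodesics), Klarreich's identification of $\partial\cc{S}$ with the space of arational laminations, and Farb and Mosher's original formulation, in which $G$ is convex cocompact exactly when some orbit in Teichm\"uller space is quasiconvex, equivalently when $G$ acts cocompactly on the weak hull $\mathcal H(\Lambda G)$ of its limit set $\Lambda G\subset\mathcal{PML}(S)$ and this hull lies in the $\epsilon$-thick part $\mathcal T_\epsilon(S)$ for some $\epsilon>0$. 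I would prove the two implications separately.

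For the forward implication, suppose $G$ is convex cocompact. Then the orbit map $G\to\mathcal T(S)$ is a quasi-isometric embedding with quasiconvex, uniformly $\epsilon$-thick image, and every Teichm\"uller geodesic joining orbit points stays $\epsilon$-thick. Postcomposing with $\pi$ and using that $\pi$ is coarsely Lipschitz makes the orbit map $G\to\cc{S}$ coarsely Lipschitz; for the matching lower bound I would show that the restriction of $\pi$ to a uniformly $\epsilon$-thick Teichm\"uller geodesic is a quasigeodesic in $\cc{S}$ with coarsely linear parametrization---a long subsegment on which $\pi$ is coarsely constant would, via Minsky's product-region estimates or Rafi's distance formula, force a curve or an annulus core to become short, contradicting thickness---and then combine this with quasiconvexity of the orbit in $\mathcal T(S)$. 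I expect this direction to be essentially routine.

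The reverse implication is the heart of the matter. Suppose the orbit map $G\to\cc{S}$ is a quasi-isometric embedding. Then $G$ is word-hyperbolic since $\cc{S}$ is hyperbolic, and $G$ is purely pseudo-Anosov: any infinite-order reducible element, or a torsion element, would generate a cyclic subgroup whose orbit in $\cc{S}$ is bounded (it stays near a preserved multicurve), whereas an undistorted infinite cyclic subgroup of a hyperbolic group must have unbounded image under a quasi-isometric embedding---a contradiction. The Gromov boundary $\partial G$ maps $G$-equivariantly and continuously into $\partial\cc{S}$, which by Klarreich is the space of arational laminations, and the first real step is to promote this to a $G$-equivariant embedding $\partial G\hookrightarrow\mathcal{PML}(S)$ with image $\Lambda G$ consisting of uniquely ergodic arational laminations, using the quasi-isometry hypothesis to control the projective convergence of the approximating curves. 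The crux---and the main obstacle---is then to show $G$ acts cocompactly on $\mathcal H(\Lambda G)$ and that $\mathcal H(\Lambda G)$ is uniformly thick, the difficulty being that $\mathcal T(S)$ is not hyperbolic, so a quasi-isometric embedding into $\cc{S}$ does not automatically promote to one into $\mathcal T(S)$. I would resolve this with subsurface-projection techniques: a word geodesic in $G$ maps to a path in $\mathcal T(S)$ which one shows fellow-travels the hull and is a quasigeodesic, by verifying through Rafi's combinatorial distance formula that no proper subsurface accumulates large projection---otherwise the associated curve would be short and its annular projection would produce a long detour invisible in $\cc{S}$, again contradicting the quasi-isometric embedding---and this simultaneously yields thickness. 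Once the hull action is cocompact and thick, Farb and Mosher's criterion gives convex cocompactness; alternatively one can run Hamenst\"adt's argument directly on the space of measured laminations via train tracks, but either way ruling out these subsurface detours is where the work lies.
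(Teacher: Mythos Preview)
The paper does not prove this theorem at all: it is stated as a background result, attributed to Kent--Leininger \cite{MR2465691} and Hamenst\"adt \cite{MR2349677}, and is used as a black box throughout (see Section~\ref{sec:ccc}). So there is no ``paper's own proof'' to compare your proposal against.

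That said, your sketch is a reasonable outline of how those original papers proceed, and it correctly identifies where the difficulty lies (the reverse implication, since $\mathcal T(S)$ is not hyperbolic). A couple of remarks if you intend to flesh it out. First, your argument that $G$ is purely pseudo-Anosov has a small gap: you rule out infinite-order reducible and finite-order elements, but you should also note that a quasi-isometric embedding of a finitely generated group into a hyperbolic space forces the group itself to be hyperbolic, hence torsion elements have finite order and the argument goes through. Second, in the reverse direction, Kent--Leininger's actual proof does not quite run the subsurface-projection argument you describe; rather, they show directly that the weak hull $\mathcal H(\Lambda G)$ is cobounded by analyzing limits of Teichm\"uller geodesics between orbit points and invoking Masur's criterion (unique ergodicity from recurrence to the thick part), together with a compactness argument. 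Hamenst\"adt's proof is closer in spirit to what you outline, working with train tracks and the space of geodesic laminations. Either approach is substantial, and your proposal would need considerably more detail to stand on its own---but for the purposes of the present paper, none of this is needed: the result is simply quoted.
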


We will apply this to the case of subgroups of $\mcg{S^z,z}$. We note that since the inclusion of a finite index subgroup is a quasi-isometry, convex cocompactness is shared amongst groups which differ only by finite index.

\section{Setup}
Let $H < \mcg{S}$ be a free abelian subgroup of rank $n$, and let $\Gamma_H < \mcg{S^z,z}$ be the full preimage of $H$ under $\Phi_*: \mcg{S^z,z} \to \mcg{S}$. As the low rank cases have been answered by \cite{MR2599078,MR3314946,MR4632569}, we restrict our attention to $H$ of rank $n \geq 2$. Take a finitely generated and purely pseudo-Anosov subgroup $G < \Gamma_H$. Our task is to show that $G$ must necessarily be convex cocompact. To begin, we reduce the problem to only consider subgroups $G < \Gamma_H$ for which the restricted homomorphism $\Phi_*|_G: G \to H$ is surjective onto $H$.

If $G < \Gamma_H$ is a subgroup such that $\Phi_*(G) \neq H$, then $H' = \Phi_*(G)$ must be some proper subgroup of $H$. Since subgroups of free abelian groups are free abelian, $H'$ is also a free abelian subgroup of $\mcg{S}$. Further, $G < \Gamma_{H'}$ and $G$ surjects onto $H'$ via $\Phi_*$. Thus, it suffices to prove Theorem \ref{thm:main2} for subgroups of $\Gamma_H$ that surject onto $H$ via $\Phi_*$.

Now assuming $\Phi_*|_G$ is surjective, we know that $\Phi_*|_G$ cannot also be injective. If it were, then $G$ would be isomorphic to a free abelian group of rank $n \geq 2$ and such a group cannot be purely pseudo-Anosov, or even irreducible. Thus, $\Phi_*|_G$ must have some nontrivial kernel, and we call this nontrivial normal subgroup $G_0 = \ker{\Phi_*|_G} = G \cap \pi_1S$. Let $\phi: G/G_0 \to H$ denote the isomorphism from the quotient to $H$.

\subsection{Realizing $H$ by homeomorphisms} \label{sec:realizable}
We will see that $H$ has a finite index normal subgroup that can be realized by homeomorphisms. Keeping Lemma \ref{lem:finiteindex} in mind, this is enough for our argument. We use to great effect the reduction system machinery of Birman-Lubotsky-McCarthy and Ivanov; see \cite[Chapter 7]{MR1195787} for more details.

Since $H$ is free abelian of rank $n \geq 2$, it must be reducible as a subgroup of $\mcg{S}$. This is because if $H$ were irreducible, it would have to contain a pseudo-Anosov mapping class whose centralizer in $\mcg{S}$ must contain $H$. However, this contradicts the fact that the centralizer of a pseudo-Anosov is virtually cyclic \cite{mccarthy1982normcent}. Thus, $H$ must be reducible and have a non-empty canonical reduction system. Let $\alpha$ denote this canonical reduction system. We assume throughout that $\alpha$ is realized as a geodesic multicurve in $S$ with respect to our fixed hyperbolic metric. Write $\alpha_1, \cdots, \alpha_m$ to denote the components of $\alpha$, and let $A_j \to S$ be the annular cover with core curve a lift of $\alpha_j$. We reserve the symbol $j$ to index objects associated to the reducing curve $\alpha_j$.

A \emph{complementary subsurface} to the canonical reduction system $\alpha$ is defined as the path metric completion $Y$ of a component $Y^\circ \subset S \setminus \alpha$. Such a complementary subsurface $Y$ is a hyperbolic surface with geodesic boundary, and the inclusion $Y^\circ \to S \setminus \alpha$ extends to an immersion $Y \to S$ which is injective on the interior and at most $2$-to-$1$ on $\partial Y$. By an abuse of notation, we often write $Y \subset S$ or refer to the map $Y \to S$ as the inclusion. Write $Y_1, \ldots, Y_l$ to denote the subsurfaces complementary to $\alpha$. We reserve the symbol $k$ to index objects associated to the complementary subsurface $Y_k$. Given a complementary surface $Y_k$, we continue to abuse notation and identify each component $\delta \subset \partial Y_k$ with the reducing curve $\alpha_j \subset S$ that is the image of $\delta$ under the immersion $Y_k \to S$. With this notation, we may write $\alpha_j \subset \partial Y_k$. In cases where $Y_k \to S$ is injective on the boundary, distinct components of $\partial Y_k$ map to distinct $\alpha_j$. However, in cases where $Y_k \to S$ is $2$-to-$1$ on $\alpha_j$, there may be some pairs of distinct components $\delta, \delta' \subset \partial Y_k$ which are identified with the same $\alpha_j$. In any circumstance where this notation might cause confusion, we explicitly clarify the situation.

Let $H' < H$ be the finite index normal subgroup of $H$ obtained by taking the intersection of $H$ with the kernel of the natural homomorphism $\mcg{S} \to \Aut(H_1(S,\ZZ/3\ZZ))$. This subgroup $H'$ has the same canonical reduction system as $H$ plus a number of other useful properties, as shown by Ivanov \cite[Chapter 1]{MR1195787}. Every element of $H'$ has a \emph{pure} representative which fixes the reducing system $\alpha$ pointwise and preserves each complementary subsurface $Y_k$. Thus, we have well defined homomorphisms $\rho_k: H' \to \mcg{Y_k}$ given by taking the restriction of a pure representative $h \mapsto h|_{Y_k}$. The additional property of being pure helps us to realize $H'$ by explicit homeomorphisms. However, we require $H'$ to have another extra property for our purposes, so we will take a further finite index subgroup to achieve this property before proceeding.

Consider for each subsurface $Y_k$ the image subgroups $\rho_k(H') < \mcg{Y_k}$. Ivanov proves that $\rho_k(H')$ must be either trivial or irreducible \cite[Theorem 7.18]{MR1195787} and torsion-free \cite[Lemma 1.6]{MR1195787}. Further, $\rho_k(H')$ must be abelian, since $H'$ is abelian. The only irreducible, torsion-free, abelian subgroups of mapping class groups are infinite cyclic groups generated by a pseudo-Anosov mapping class. For each $Y_k$ with nontrivial $\rho_k(H')$, let $\psi_k$ be a pseudo-Anosov mapping class generating $\rho_k(H')$, and call $Y_k$ a \emph{pseudo-Anosov component}. For each $Y_k$ with trivial $\rho_k(H')$, let $\psi_k$ be the identity mapping class, and call $Y_k$ an \emph{identity component}.

Given a pseudo-Anosov component $Y_k$, we realize the pseudo-Anosov mapping class $\psi_k$ by a representative homeomorphism $\psi_k$ that preserves a pair of transverse measured geodesic laminations $(\Lambda^s, \mu^s)$ and $(\Lambda^u, \mu^u)$ called the stable and unstable laminations. Taking either lamination to be $\Lambda$, any component of $Y_k \setminus \Lambda$ containing a component of $\partial Y_k$ is a semi-open annulus whose path metric completion is a \emph{crown}; see \cite[Section 4]{MR964685}. Given a component $\delta \subset \partial Y_k$, let $C_\delta$ denote the crown obtained from the component of $Y_k \setminus \Lambda$ containing $\delta$. The boundary $\partial C_\delta$ consists of $\delta$ and a finite number $d$ of bi-infinite geodesics $\lambda_1, \ldots, \lambda_d$, labeled so that the index of $\lambda_i$ increases (modulo $d$) in the direction of the boundary orientation of $\delta$. Let $P_i \subset C_\delta$ be the geodesic ray beginning on and orthogonal to $\delta$ which extends out infinitely along the `spike' of the crown between $\lambda_i$ and $\lambda_{i+1}$ (modulo $d$). We call these $P_i$ \emph{prongs}. By modifying $\psi_k$ via an isotopy in the complement of $\Lambda$ if necessary, we can arrange to have $\psi_k$ preserve the set of prongs $\set{P_i}$. Note that $\psi_k$ may rotate the prongs. More precisely, if $x_i = P_i \cap \delta$, then there exists an integer $c, ~0 \leq c < d$ so that $\psi_k(x_i) = x_{i+c}$ (modulo $d$) for all $i$. We call this value $c = c(\psi_k,\delta) \in \ZZ/d\ZZ$ the \emph{crown shift} of the pseudo-Anosov homeomorphism $\psi_k$ along $\delta \subset \partial Y_k$. For our argument, we want to modify the $\psi_k$ to have a crown shift of $0$ along every boundary component.

\begin{remark} \label{rmk:fracdehntwist}
    This crown shift $c(\psi_k,\delta)$ is very much related to the \emph{fractional Dehn twist coefficient} of a pseudo-Anosov mapping class $f \in \mcg{Y_k, \partial Y_k}$ relative to $\delta \subset \partial Y_k$. In particular, if $f$ is a rel boundary mapping class with representatives free isotopic to $\psi_k$, then its fractional Dehn twist coefficient relative to $\delta$ must be congruent to $\frac{c}{d}$ modulo $1$; see \cite[Section 3.2]{MR2318562}.
\end{remark}

For each pseudo-Anosov component $Y_k$, consider each component $\delta \subset \partial Y_k$ for which the image of $\delta$ under the inclusion $Y_k \subset S$ is a reducing curve. For each of these $\delta$, observe the crown and note down the number of prongs $d(\psi_k,\delta)$. Let $b$ be the lowest common multiple of all such $d$. Fixing a minimal generating set $\set{f_1, \ldots, f_n}$ for $H'$, we consider the finite index subgroup $H''$ generated by $\set{f_1^b, \ldots, f_n^b}$. Since $\rho_k(H')$ is generated by a mapping class isotopic to $\psi_k$, we know that $\rho_k(H'')$ is generated by a mapping class isotopic to $\psi_k^b$. In addition, if $c$ is the crown shift of $\psi_k$ along $\delta$, then the crown shift of $\psi_k^b$ along $\delta$ must be $bc$ modulo $d$. Since $b$ is a multiple of $d$, we have $c(\psi_k^b,\delta) = 0$.

We now extend $\psi_k^b$ to a homeomorphism on $S$. We do this by first choosing a rel boundary mapping class $\overline{\psi}_k \in \mcg{Y_k, \partial Y_k}$ with representatives free isotopic to $\psi_k^b$. As mentioned in Remark \ref{rmk:fracdehntwist}, the fractional Dehn twist coefficient of any such $\overline{\psi}_k$ relative to each $\delta \subset \partial Y_k$ yields a number congruent to $\frac{c}{d}$ modulo $1$. Since $c(\psi_k^b,\delta) = 0$, the fractional Dehn twist coefficients here must be integers. We make the choice of $\overline{\psi}_k$ which has fractional Dehn twist coefficient $0$ relative to each $\delta$. Any other choice can be obtained by composing with powers of Dehn twists about the boundary curves of $Y_k$. Next, we realize our chosen mapping class $\overline{\psi}_k$ by a representative homeomorphism free isotopic to $\psi_k^b$ via an isotopy supported on $Y_k \setminus \Lambda$. We can again arrange that $\overline{\psi}_k$ preserves the prongs of each crown. Now we have a homeomorphism $\overline{\psi}_k: Y_k \to Y_k$ that fixes $\partial Y_k$ pointwise and preserves the prongs of each crown with crown shift $0$. Finally, we extend $\overline{\psi}_k: Y_k \to Y_k$ to a homeomorphism on $\widehat{\psi}_k: S \to S$ by gluing with the identity map on $S \setminus Y_k$. Continuing to abusing notation, we write $\widehat{\psi}_k$ to refer to both the homeomorphism and its mapping class in $\mcg{S}$. Note that for each identity component, $\widehat{\psi}_k$ is just the identity map on $S$.

Let $\tau_j$ denote the right-handed Dehn twist about $\alpha_j$, and let $\widehat{H} < \mcg{S}$ be the subgroup
\[
\widehat{H} = \gen{\tau_1, \ldots, \tau_m, \widehat{\psi}_1, \ldots, \widehat{\psi}_l}.
\]
Since the nontrivial generators are supported on disjoint subsurfaces, $\widehat{H}$ is free abelian of rank $r \leq m+l$ with the deficiency being exactly the number of trivial generators (or identity components). By construction, the pure representative of each mapping class in $H''$ is some product of powers of the generators of $\widehat{H}$, so $H'' < \widehat{H}$. To reiterate, each $h \in H''$ can be decomposed as a product of generators
\begin{equation} \label{eq:decomposition}
    h = \tau_1^{q_1} \circ \cdots \circ \tau_m^{q_m} \circ \widehat{\psi}_1^{q_{m+1}} \circ \cdots \circ \widehat{\psi}_l^{q_{m+l}},
\end{equation}
and the decomposition is unique after omitting any trivial generators.

\begin{remark} \label{rmk:k'notation}
To help with indexing objects associated to $Y_k$ but offset by $m$, we introduce the symbol $k' := m+k$. As an example, in the decomposition above, the number $q_j$ is the power of $\tau_j$ with $j \in \oneto{m}$, and the number $q_{k'}$ is the power of $\widehat{\psi}_k$ with $k \in \oneto{l}$ and $k' = m+k$.    
\end{remark}

Each nontrivial $\widehat{\psi}_k$ is realized by a homeomorphism supported on $Y_k$. We can adjust these homeomorphisms via isotopy to be supported on marginally smaller copies of $Y_k$ that are disjoint from small annular neighborhoods around each $\alpha_j$. We can also realize each $\tau_j$ by Dehn twists supported on these small annular neighborhoods. Since the nontrivial generators of $\widehat{H}$ can be realized by homeomorphisms which are supported on disjoint surfaces and thus commute, we obtain an injective homomorphism $\widehat{H} \to \Homeo^+(S)$. Thus, the subgroup $H'' < \widehat{H}$ can also be realized by homeomorphisms.

\begin{remark}
    By replacing $\phi_k^b$ with $\phi_k$ in the construction above, one can just as readily show that $H'$ can be realized by homeomorphisms. The key difference is that restrictions of $h \in H'$ to subsurfaces may have nonzero crown shift, while restrictions of $h \in H''$ must have zero crown shift. This additional property (and the group $\widehat{H}$) will be necessary in Section \ref{sec:Haction}.
\end{remark}

\begin{remark}
    The question of which subgroups $H < \mcg{S}$ are realizable in $\Homeo^+(S)$ -- sometimes called the generalized Nielsen realization problem or the section problem -- has been of interest to many in the literature. Farb mentions this problem in \cite[Chapter 2, Section 6.3]{MR2264130} and remarks that the work of Birman-Lubotsky-McCarthy can be used to prove free abelian $H$ are always realizable. Mann and Tshishiku give more details in \cite[Section 4.2]{MR3967366}, and describe many ideas relevant to our own approach here. Thus, it was already well-known that general free abelian $H$ are realizable. Nevertheless, we still find our particular approach here using the specific case of \emph{pure} free abelian subgroups necessary, because the objects and properties therein remain important for the remainder of the work.
\end{remark}

Lemma \ref{lem:finiteindex} tells us that can always replace $H$ with the finite index subgroup $H''$. In practice, we may as well have assumed that $H = H''$ in the first place. We adopt this assumption for the remainder of the work. We have just shown that $H$ can be realized by homeomorphisms. Fix a minimal generating set $\set{h_1, \ldots, h_n}$ for $H$, and realize these generators by homeomorphisms as above, so that they generate a copy of $H$ in $\Homeo^+(S)$. Since the homeomorphisms of $H$ have common fixed points (such as on the boundary of small annular neighborhoods around each $\alpha_j$), we may assume that $H$ fixes our basepoint $z \in S$ (up to conjugation by a homeomorphism isotopic to the identity). Now consider the action of $H$ on $\pi_1(S,z)$ which induces a homomorphism $H \to \Aut(\pi_1(S,z))$. The short exact sequence
\[
    \begin{tikzcd}
        1 \ar{r} & \pi_1S \ar{r} & \Gamma_H \ar{r} & H \ar{r} & 1
    \end{tikzcd}
\]
splits with respect to this homomorphism, and $\Gamma_H \cong \pi_1S \rtimes \ZZ^n$.

\subsection{$H$ acting on subsurfaces and annuli} \label{sec:Haction}
Each mapping class $h \in \widehat{H}$ acts on the annular covers $A_j$ (via the lift $\widetilde{h}_j$) and on the complementary subsurfaces $Y_k$ (via the restriction $h \mapsto h|_{Y_k}$). These actions further induce actions on the corresponding arc and curve complexes $\ac{A_j}$ and $\acc{Y_k}$. We say that a mapping class $h \in \widehat{H}$ acts \emph{coarsely as the identity} on an arc graph $\ac{A_j}$ if there exists some uniform bound $b > 0$ such that $d(\beta,\widetilde{h}_j(\beta)) \leq b$ for any vertex $\beta \in \ac{A_j}$. Equivalently, the sup-distance to the identity is uniformly bounded. The following lemma states that the action of $\widehat{H}$ on these subsurfaces and annuli are `coarsely diagonal' with respect to decomposition \ref{eq:decomposition}.

\begin{lemma} \label{lem:diagonalaction}
    Each $\tau_j$ acts
    \begin{enumerate}
        \item loxodromically on $\ac{A_j}$;
        \item coarsely as the identity on each $\ac{A_i}$ for $i \neq j$ (with uniform bound $2$);
        \item as the identity on each $\acc{Y_k}$.
    \end{enumerate}
    Each nontrivial $\widehat{\psi}_k$ acts
    \begin{enumerate}
        \item loxodromically on the $\acc{Y_k}$;
        \item as the identity on each $\acc{Y_i}$ for $i \neq k$;
        \item coarsely as the identity on each $\ac{A_j}$ (with uniform bound $2$).
    \end{enumerate}
\end{lemma}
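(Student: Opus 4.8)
The plan is to verify each of the six claims by appealing to the disjointness of the supports of the generators together with standard facts about how Dehn twists and pseudo-Anosov maps act on annular arc graphs and on arc-and-curve graphs of the relevant subsurfaces. The guiding principle is that each nontrivial generator of $\widehat{H}$ is supported on a subsurface ($\alpha_j$'s annular neighborhood for $\tau_j$, or $Y_k$ for $\widehat{\psi}_k$), and a homeomorphism supported away from a subsurface $Z$ acts coarsely trivially on the arc graph / arc-and-curve graph associated to $Z$; the only subtlety is quantifying ``coarsely'' in the annular case, where isotopies must fix $\partial\overline{A}$ pointwise.

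\textbf{The $\tau_j$ claims.} For (1), the core fact is that a right-handed Dehn twist $\tau_j$ about the core curve of $A_j$ acts as translation by $2$ (or at least loxodromically) on $\ac{A_j}$; this is precisely the Masur--Minsky annular-projection statement \cite{MR1791145}, since $d_{A_j}(\beta, \tau_j^N\beta) = 2N + O(1)$ for any arc $\beta$ crossing the annulus. For (3), I choose the homeomorphism representing $\tau_j$ to be supported on a small annular neighborhood $N_j$ of $\alpha_j$ disjoint from each $Y_k$ (possible by the support adjustments made just before the lemma); then $\tau_j$ restricts to the identity on $Y_k$, hence acts as the identity on $\acc{Y_k}$. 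For (2), $\tau_j$ is supported on $N_j$ which is disjoint from $\alpha_i$ for $i\neq j$; I lift $\tau_j$ to the annular cover $A_i$. Since $\alpha_i$ and $\alpha_j$ are disjoint geodesics in $S$, a lift of $\alpha_j$ to the cover $A_i$ is an arc or curve, and $\tau_j$ lifts to a homeomorphism of $\overline{A_i}$ supported in a neighborhood of the lifted copies of $\alpha_j$; the cleanest way to get the uniform bound $2$ is to observe that for any arc $\beta$ of $\overline{A_i}$, both $\beta$ and $\widetilde{(\tau_j)}_i(\beta)$ are disjoint from (a lift of) a \emph{common} arc obtained by pushing $\beta$ off the twisting region, so $d_{A_i}(\beta,\widetilde{(\tau_j)}_i\beta)\le 2$. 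I would spell out this ``common disjoint arc'' argument carefully, as it is the most likely place to slip on a constant or an orientation.

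\textbf{The $\widehat{\psi}_k$ claims.} These are dual. For (1), $\widehat{\psi}_k$ restricts on $Y_k$ to a homeomorphism freely isotopic to the pseudo-Anosov $\psi_k^b$; a pseudo-Anosov acts loxodromically on the curve complex of its supporting surface by Masur--Minsky \cite{MR1791145}, and since $\acc{Y_k}$ is quasi-isometric to $\cc{Y_k}$ (the arc-and-curve graph and curve graph of a fixed surface with boundary are uniformly quasi-isometric), it acts loxodromically on $\acc{Y_k}$ as well; here I must use that the alternate definition of $\acc{Y_k}$ was adopted precisely for the exceptional small-complexity surfaces so that this quasi-isometry and loxodromicity still hold. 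For (2), $\widehat{\psi}_k$ is supported on $Y_k$, which is disjoint from $Y_i$ for $i\neq k$ (distinct complementary components), so $\widehat{\psi}_k$ restricts to the identity on $Y_i$ and hence acts as the identity on $\acc{Y_i}$. For (3), $\widehat{\psi}_k$ is supported on a slightly shrunken $Y_k$ disjoint from the annular neighborhood $N_j$ of $\alpha_j$; in the annular cover $A_j$, a lift of $Y_k$ is again a ``sub-surface'' disjoint from the core, and the same ``common disjoint arc off the support'' argument as in the $\tau_j$ case (2) gives $d_{A_j}(\beta,\widetilde{(\widehat{\psi}_k)}_j\beta)\le 2$ for every arc $\beta$ of $\overline{A_j}$.

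\textbf{Main obstacle.} The delicate point is the uniform bound $2$ in parts (2) and (3): because isotopies of $\overline{A}$ are required to fix $\partial\overline{A}$ pointwise, a homeomorphism supported off the twisting region of $A_j$ is \emph{not} literally the identity after lifting (it can permute the lifts of its support and drag endpoints of arcs along $\partial\overline{A}$ in a bounded way), so one genuinely needs the $2$ rather than $0$. I would handle this by fixing, for each arc $\beta$ of $\overline{A_j}$, an arc $\beta'$ obtained by isotoping $\beta$ (rel endpoints) to be disjoint from the region where the lift of the supporting subsurface meets $\overline{A_j}$; then $\beta'$ is fixed by the lifted homeomorphism, so $\beta$ and $\widehat{\psi}_k$-image (resp.\ $\tau_j$-image) of $\beta$ are each within distance $1$ of $\beta'$, giving distance $\le 2$. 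The rest is bookkeeping with disjoint supports; I expect no conceptual difficulty there.
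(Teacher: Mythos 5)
Your treatment of the $\tau_j$ claims and of $\widehat{\psi}_k$ acting on the $\acc{Y_i}$ is essentially right (the paper's mechanism for the bound $2$ is slightly cleaner: it exhibits a \emph{single} vertex $\gamma \in \ac{A_i}$ fixed by the lift and then shows via a signed-intersection-number argument that any lift fixing one vertex displaces every vertex by at most $2$, which avoids having to produce a $\beta$-dependent arc disjoint from $\beta$). One small imprecision: when $\alpha_j \subset \partial Y_k$ the support of $\tau_j$ is \emph{not} disjoint from $Y_k$, so $\tau_j$ does move arcs of $Y_k$ with an endpoint on $\alpha_j$; the statement is rescued because isotopies in $\acc{Y_k}$ are free on the boundary and undo the boundary twist. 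That is worth a sentence but is not a real gap.

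The genuine gap is in claim (3) for $\widehat{\psi}_k$ in the case $\alpha_j \subset \partial Y_k$. There the preimage of (even the slightly shrunken) $Y_k$ in the annular cover $A_j$ contains a component adjacent to the core curve that fills an entire neighborhood of one end of $\overline{A}_j$ (both ends, if $Y_k \to S$ is $2$-to-$1$ over $\alpha_j$). Every essential arc of $\overline{A}_j$ must pass through this region, so there is no way to ``push $\beta$ off the support,'' and your common-disjoint-arc argument does not get started. Indeed the claim is genuinely sensitive to the choice of representative: replacing $\widehat{\psi}_k$ by its composition with a power of $\tau_j$ does not change its free isotopy class on $Y_k$ but changes its action on $\ac{A_j}$ by a loxodromic, so no argument that only uses ``supported on $Y_k$, disjoint from $N_j$'' can yield a uniform bound. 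The paper's proof produces a fixed vertex of $\ac{A_j}$ by concatenating a ray running into the lift of the subsurface on the far side of the core (or a second crown prong, in the $2$-to-$1$ case) with a \emph{prong of the lifted crown} of the invariant lamination of $\psi_k$; that this prong is preserved is exactly where the zero crown shift and zero fractional Dehn twist coefficient arranged in the construction of $\widehat{\psi}_k$ (via passing to $H''$ and choosing $\overline{\psi}_k$) are used. Your proposal never invokes these properties, which is the symptom of the missing idea.
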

\begin{proof}
    We begin with the 3 statements for $\tau_j$. For statement (1), the lift of $\tau_j$ to $A_j$ must twist about the core curve of $A_j$, and such homeomorphisms act loxodromically on $\ac{A_j}$; see \cite{MR1714338, MR1791145}. For statement (2), we first find a vertex $\gamma \in \ac{A_i}$ fixed by the lift of $\tau_j$. Choose a geodesic curve on $S \setminus \alpha_j$ that intersects $\alpha_i$, and then choose a component of the preimage of this curve in the annular cover $A_i \to S$. Take the isotopy class of the resulting arc to be $\gamma$. Since the original curve on $S$ was disjoint from $\alpha_j$, the lift of $\tau_j$ fixes $\gamma$. We now appeal to the following claim.
    
    \begin{claim} \label{clm:coarseidentity}
        Any lift $\widetilde{h}$ that fixes a vertex $\gamma \in \ac{A_j}$ acts coarsely as the identity on $\ac{A_j}$ (with uniform bound $2$).
    \end{claim}
    \begin{proof}[Proof of claim]
        Suppose $\widetilde{h}$ fixes $\gamma \in \ac{A_j}$, and take any other vertex $\beta \in \ac{A_j}$. Let $i(\cdot,\cdot)$ denote the signed intersection number of geodesic representatives. Since $\widetilde{h}(\gamma) = \gamma$, we have that $i(\beta, \gamma) = i(\widetilde{h}(\beta), \widetilde{h}(\gamma)) = i(\widetilde{h}(\beta), \gamma)$. Any two essential geodesic arcs on $A_j$ which have the same signed intersection number with a common arc can intersect each other at most once. (One can see this via a linking argument in the universal cover.) Thus, $|i(\beta, \widetilde{h}(\beta))| \leq 1$. Since distances in $\ac{A_j}$ are given by geometric intersection number plus $1$, this means $d(\beta, \widetilde{h}(\beta)) \leq 2$. We have shown a uniform bound on the distance that $\widetilde{h}$ can send an arbitrary vertex of $\ac{A_j}$.
    \end{proof}
    
    Statement (3) splits into two cases. If $\alpha_j \not \subset \partial Y_k$, then the support of $\tau_j$ is disjoint from $Y_k$ and thus fixes each arc and curve on $Y_k$. If $\alpha_j \subset \partial Y_k$, then $\tau_j$ will affect arcs on $Y_k$ with an endpoint on $\alpha_j$. Nevertheless, such arcs only differ from their $\tau_j$-image by a free isotopy which undoes the boundary twist, so $\tau_j$ still fixes each isotopy class of arcs and curves on $Y_k$.

    We now continue to the 3 statements for nontrivial $\widehat{\psi}_k$. For statement (1), the restriction of $\widehat{\psi}_k$ to $Y_k$ is the pseudo-Anosov mapping class $\psi_k$, and pseudo-Anosovs act loxodromically on $\acc{Y_k}$; see \cite{MR1714338, MR1791145}. For statement (2), the restriction of $\widehat{\psi}_k$ to any $Y_i$ with $i \neq k$ is the identity by construction. For statement (3), we again need only find a vertex $\gamma \in \ac{A_j}$ which is fixed by the lift of $\widehat{\psi}_k$ and then apply Claim \ref{clm:coarseidentity}. There are two possible cases.

    If $\alpha_j \not \subset \partial Y_k$, choose a geodesic curve on $S \setminus Y_k$ that intersects $\alpha_j$, and then choose a component of the preimage of this curve in the annular cover $A_j \to S$. Take the isotopy class of the resulting arc to be $\gamma$. Since the original curve on $S$ was disjoint from $Y_k$, the lift of $\widehat{\psi}_k$ fixes $\gamma$.

    Now suppose $\alpha_j \subset \partial Y_k$. Note that $\alpha_j$ is contained in the boundary of at most two complementary subsurfaces, with one of them being $Y_k$. If the immersion $Y_k \to S$ is injective on $\alpha_j$, then there is another subsurface $Y_i$ such that $\alpha_j \subset \partial Y_i$ and $i \neq k$. If the immersion $Y_k \to S$ is $2$-to-$1$ on $Y_k$, then we will still write $\alpha_j \subset \partial Y_i$ but now $i = k$. We first proceed with the argument in the case $i \neq k$.
    
    Just as $\alpha_j$ lies between the two subsurfaces $Y_i$ and $Y_k$ in $S$, the core curve of $A_j$ lies between two subsurfaces (complementary to all lifts of each component of $\alpha$), one of which is a lift of $Y_i$ and the other a lift of $Y_k$. We construct an essential arc $\gamma \subset A_j$ by joining two rays that start from the core curve and travel out to distinct boundary components of $\overline{A}_j$.

    We first describe a ray $\gamma_1$ which travels into the interior of the lift of $Y_i$ adjacent to the core curve. Choose a geodesic ray on $S \setminus Y_k$ that starts on $\alpha_j$ and continues on forever (for infinite distance). For example, the ray might limit to some closed curve on $S \setminus Y_k$. Choose a preimage of this ray in the annular cover $A_j \to S$, and take the resulting arc to be $\gamma_1$. Since the original ray on $S$ was chosen disjoint from $Y_k$, any lift of $\widehat{\psi}_k$ fixes $\gamma_1$.

    Next, we describe a ray $\gamma_2$ which travels into the interior of the lift of $Y_k$ adjacent to the core curve. The lamination $\Lambda \subset Y_k$ lifts to a lamination $\widetilde{\Lambda}$ in the lift of $Y_k$ adjacent to the core curve, and similarly the crown $C$ along $\alpha_j$ lifts to a crown $\widetilde{C}$ along the core curve. Choose any prong of $\widetilde{C}$ to be $\gamma_2$. Since $\overline{\psi}_k$ was constructed to have $0$ crown shift and chosen to have $0$ fractional Dehn twist coefficient, any lift of $\widehat{\psi}_k$ must preserve $\gamma_2$, at least up to isotopy in the complement of $\widetilde{\Lambda}$.
    
    Finally, we join $\gamma_1$ and $\gamma_2$ perhaps by some subsegment $\gamma_3$ of the core curve to obtain an essential arc $\gamma$ whose isotopy class is preserved by $\widehat{\psi}_k$.

    Now in the case where $i = k$, we again join two rays to produce our desired $\gamma$, but now both rays will be of type $\gamma_2$ above. More precisely, if $\delta, \delta' \subset \partial Y_k$ are distinct components which both map to $\alpha_j$ under the immersion $Y_k \to S$, then the crowns $C_\delta, C_{\delta'}$ lift to crowns $\widetilde{C_\delta}, \widetilde{C_{\delta'}}$. We choose any prong of $\widetilde{C_\delta}$ to be $\gamma_1$, and any prong of $\widetilde{C_{\delta'}}$ to be $\gamma_2$. The rest of the argument remains the same.
\end{proof}

Note that a complementary component $Y_k$ is a pseudo-Anosov component precisely if $H$ contains an element $h$ which acts loxodromically on $\acc{Y_k}$, and an identity component otherwise. Similarly, we will say that a component $\alpha_j \subset \alpha$ is \emph{twist} if $H$ contains any element $h$ which acts loxodromically on $\ac{A_j}$, and \emph{non-twist} otherwise. For any $h \in H$, let
\[
    V(h) = \begin{bmatrix} q_1 \\ \vdots \\ q_{m+l} \end{bmatrix} \in \ZZ^{m+l},
\]
where $q_j$ for $j \in \oneto{m}$ and $q_{k'}$ for $k \in \oneto{l}$ are the powers with respect to decomposition \ref{eq:decomposition}. Observe $q_j = 0$ if $\alpha_j$ is non-twist. We choose the convention $q_{k'} = 0$ if $Y_k$ is an identity component. With this choice, the map $V: H \to \ZZ^{m+l}$ is a well-defined, injective homomorphism. Using Lemma \ref{lem:diagonalaction}, we note that an element $h \in H$ acts loxodromically on $\ac{A_j}$ precisely when the entry $q_j$ of $V(h)$ is nonzero. Similarly, $h \in H$ acts loxodromically on $\acc{Y_k}$ precisely when the entry $q_{k'}$ of $V(h)$ is nonzero.

Let $\widehat{\rho}_j: H \to \gen{\tau_j}$ be the composition of the inclusion $H \to \widehat{H}$ and the projection $\widehat{H} \to \gen{\tau_j}$ defined by our chosen basis, and let $H_j = \ker \widehat{\rho}_j$. The kernel $H_j$ contains precisely the group elements $h$ which act coarsely as the identity on $\ac{A_j}$. When $\alpha_j$ is twist, the quotient $H/H_j$ is isomorphic to some infinite cyclic subgroup of $\gen{\tau_j}$. Let $\overline{G}_j = \phi^{-1}(H_j)$ be the preimage of $H_j$ under the isomorphism $\phi: G/G_0 \to H$. When $\alpha_j$ is twist, we have $(G/G_0)/ \overline{G}_j \cong \ZZ$.

Similarly, let $\widehat{\rho}_{k'}: H \to \gen{\widehat{\psi}_k}$ be the composition of the inclusion $H \to \widehat{H}$ and the projection $\widehat{H} \to \gen{\widehat{\psi}_k}$ defined by our chosen basis, and let $H_{k'} = \ker \widehat{\rho}_{k'}$. The kernel $H_{k'}$ contains precisely the group elements $h$ which act coarsely as the identity on $\acc{Y_k}$. When $Y_k$ is a pseudo-Anosov component, the quotient $H/H_{k'}$ is isomorphic to some infinite cyclic subgroup of $\gen{\widehat{\psi}_k}$. Let $\overline{G}_{k'} = \phi^{-1}(H_{k'})$ be the preimage of $H_{k'}$ under the isomorphism $\phi: G/G_0 \to H$. When $Y_k$ is pseudo-Anosov, we have $(G/G_0)/ \overline{G}_{k'} \cong \ZZ$.

\subsection{$\Gamma_H$ acting on $\HH^2$ and T} \label{sec:gammaaction}
We now move to analyzing the group $\Gamma_H$ containing $G$. Recall that we have identified $\HH^2$ with the universal cover $p: \HH^2 \to S$. The covering space action of $\pi_1S$ on $\HH^2$ extends to an action of $\Gamma_H \cong \pi_1S \rtimes \ZZ^n$, as we now explain.

Recall that we have fixed a generating set $\set{h_1, \ldots, h_n}$ for $H$, realized by homeomorphisms fixing the basepoint $z$. Now given any mapping class $\varphi \in \Gamma_H < \mcg{S^z,z}$, we abuse notation and write $\varphi: S^z \to S^z$ for a representative homeomorphism and also write $\varphi: S \to S$ for its extension obtained by filling the $z$-puncture back in. Now $\varphi: S \to S$ is a representative of the mapping class $\Phi_*(\varphi) \in H$. Thus $\varphi: S \to S$ is isotopic to some product of powers generators $h_1^{a_1} \circ \cdots \circ h_n^{a_n}$. The lift $\widetilde{\varphi}: \HH^2 \to \HH^2$ fixing $\widetilde{z}$ is isotopic to a lift of $h_1^{a_1} \circ \cdots \circ h_n^{a_n}$ (not necessarily fixing $\widetilde{z}$), and so these two lifts have the same extension to $\partial\HH^2$. Given lifts $\widetilde{h}_i: \HH^2 \to \HH^2$ for each $h_i$, any lift of $h_1^{a_1} \circ \cdots \circ h_n^{a_n}$ can be obtained by composing $\widetilde{h}_1^{a_1} \circ \cdots \circ \widetilde{h}_n^{a_n}$ with an element of $\pi_1S$. Conversely, any such composition is a lift of $h_1^{a_1} \circ \cdots \circ h_n^{a_n}$. Thus, the action of $\Gamma_H$ on $\partial\HH^2$ factors through an isomorphism with the group $\gen{\widetilde{h}_1, \ldots, \widetilde{h}_n, \pi_1S}$ acting on $\partial\HH^2$. This isomorphism $\Gamma_H \cong \gen{\widetilde{h}_1, \ldots, \widetilde{h}_n, \pi_1S}$ then defines an action on $\HH^2$ extending the covering action of $\pi_1S$. Alternatively, each given lift $\widetilde{h}_i$ is equivariantly isotopic to the lift $\widetilde{\varphi}_i$ of some $\varphi_i \in \Gamma_H$ with $\Phi_*(\varphi_i) = h_i$. Then $\Gamma_H = \pi_1S \rtimes \gen{\varphi_1, \ldots, \varphi_n}$ acts on $\HH^2$ so that $\pi_1S$ acts by covering transformations and $\varphi_i$ acts by $\widetilde{h}_i$. Note that while the $\pi_1S$ part of this action is by isometries, the full $\Gamma_H$-action on $\HH^2$ \underline{is not} by isometries.

By the Collar Lemma \cite{MR0379833}, we can assume that our fixed hyperbolic metric on $S$ is chosen so that the lengths of the $\alpha_j$ are short enough to guarantee that any two components of $p^{-1}(\alpha)$ are distance at least $2$ apart in $\HH^2$. Let $T = T_\alpha$ be the Bass-Serre tree dual to $p^{-1}(\alpha)$.

The covering space action of $\pi_1S$ on $\HH^2$ preserves $p^{-1}(\alpha)$, and so $\pi_1S$ also acts on $T$. Further, since each $h_i$ preserves $\alpha$, each $\widetilde{h}_i$ preserves $p^{-1}(\alpha)$, so $\Gamma_H \cong \gen{\widetilde{h}_1,\ldots,\widetilde{h}_n,\pi_1S}$ also acts on the Bass-Serre tree $T$. Since each $h_i$ fixes each curve $\alpha_j$ and each subsurface $Y_k$, a pair of vertices/edges of $T$ are in the same $\Gamma_H$-orbit if and only if they are in the same $\pi_1S$-orbit. Unlike the action of $\Gamma_H$ on $\HH^2$, the action of $\Gamma_H$ on $T$ \underline{is} by isometries.

For each edge $e$ of $T$, we write $\widetilde{\alpha}_e$ to denote the component of $p^{-1}(\alpha)$ that is dual to $e$. We let $K_e = \Stab_{\pi_1S}(\widetilde{\alpha}_e)$ and define $A_e$ to be the annulus $\HH^2/K_e$. We can identify $A_e$ with exactly one of the annular covers $A_1, \ldots, A_m$. There exists a unique index $j(e) \in \oneto{m}$ so that $p(\widetilde{\alpha}_e) = \alpha_{j(e)}$. When convenient, we will also write $\alpha_e = \alpha_{j(e)}$. When $e$ and $e'$ are in the same $\Gamma$-orbit, $A_e$ and $A_{e'}$ are equivalent annular covers of $S$ with core curve $\alpha_e = \alpha_{e'}$. Hence, we can isometrically identify all these annuli: $A_e = A_{j(e)} = A_{j(e')} = A_{e'}$.

For each vertex $t$ of $T$, we write $\widetilde{Y}_t^\circ$ to denote the component of $\HH^2 \setminus p^{-1}(\alpha)$ dual to $t$, and we use $\widetilde{Y}_t$ for its closure. We let $K_t = \Stab_{\pi_1S}(\widetilde{Y}_t)$ and define $Y_t$ to be $\widetilde{Y}_t/K_t$. We can identify each $Y_t$ with exactly one of the complementary subsurfaces $Y_1, \ldots, Y_l$ as follows. For each vertex $t \in T$, let $\Upsilon_t := \HH^2/K_t$. The surface $Y_t$ is then the convex core of $\Upsilon_t$ and there is a unique $k(t) \in \oneto{l}$ so that the covering map $\Upsilon_t \to S$ maps the interior of $Y_t$ isometrically onto $Y_{k(t)}^\circ \in \set{Y_1^\circ, \ldots, Y_l^\circ}$. If $t$ and $t'$ are in the same $\Gamma$-orbit, then $\Upsilon_t$ and $\Upsilon_{t'}$ are equivalent covers of $S$ with different choices of base point. Hence, there is an isomorphism of covering spaces $\Upsilon_t \to \Upsilon_{t'}$ that sends $Y_t$ isometrically to $Y_{t'}$. In particular, $Y_{k(t)} = Y_{k(t')}$, and we use this to identify $Y_t = Y_{k(t)} = Y_{k(t')} = Y_{t'}$. 

We say that an edge $e$ of $T$ is \emph{$\alpha_j$-dual} if $\alpha_e = \alpha_j$. We then also say that $e$ is \emph{twist} if $\alpha_j$ is twist, and \emph{non-twist} if $\alpha_j$ is non-twist. Similarly, we say that a vertex $t$ of $T$ is \emph{$Y_k$-dual} if $Y_t = Y_k$. We then also say that $t$ is a \emph{pseudo-Anosov vertex} if $Y_k$ is a pseudo-Anosov component, and an \emph{identity vertex} if $Y_k$ is an identity component. Since each $G$-orbit is contained in some $\Gamma_H$-orbit, all edges in the same $G$-orbit share the property of being $\alpha_j$-dual or twist, and all vertices in the same $G$-orbit share the property of being $Y_k$-dual or pseudo-Anosov.

We choose a $\pi_1S$-equivariant homeomorphism $T \to \Phi^{-1}(\alpha)$ as in Theorem \ref{thm:tree-curve-complex}, which allows us to identify vertices and edges of $T$ with simplices of $\cc{S^z}$ in $\Phi^{-1}(\alpha)$. If an edge $e$ and vertex $t$ are identified with simplices $u_e$ and $u_t$, then $K_e = K_{u_e}$ and $K_t = K_{u_t}$ are indeed special cases of simplex stabilizers. Moreover, $\widetilde{Y}_t = \hh_{u_t}$ and $\widetilde{\alpha}_e = \hh_{u_e}$. Using this, and the fact that the $\Gamma$-orbits and $\pi_1S$-orbits of vertices and edges of $T$ are the same, it follows that the $\pi_1S$-equivariant map $T \to \Phi^{-1}(\alpha) \subset \cc{S^z}$ is also $\Gamma_H$-equivariant.

After adjusting the homeomorphisms $h_i$ if necessary (via isotopy on small annular neighborhoods around components of $\alpha$), we can also choose a $\Gamma_H$-equivariant map $\HH^2 \to T$ sending $\widetilde{\alpha}_e$ to the midpoint of $e$ and $\widetilde{Y}_t$ to the $\frac{1}{2}$-neighborhood of $t$. There are many such choices, but we make a choice that will be convenient for future application: we choose the map to be $K$-Lipschitz, for some $K>0$. This is possible because of our assumption that the minimal distance between pairs of components of $p^{-1}(\alpha)$ is at least $2$.

\subsection{Invariant subtrees}
We now define invariant subtrees of $T$ associated to simplex stabilizers and our purely pseudo-Anosov subgroup $G < \Gamma_H$. These subtrees will allow us to translate distances in $\cc{S^z}$ to distances in $G$. We begin with the subtrees associated to simplex stabilizers. The lemmas in this section originate from \cite[Section 3.3]{MR4632569}.

For each simplex $u \subset \cc{S^z}$, recall the stabilizer $K_u < \pi_1S$ and its convex hull $\hh_u \subset \HH^2$. The group $K_u$ acts by isometries on $T$. If this action does not have a global fixed point, we let $T^u$ be the minimal $K_u$-invariant subtree of $T$. In this case, $T^u$ is the union of the axes of loxodromic elements; see \cite{MR1886668}. If the $K_u$-action does have a global fixed point in $T$, we instead define $T^u$ to be the maximal fixed subtree.

The following is \cite[Lemma 3.1]{MR4632569}, which shows that much of the structure of $T^u$ can be determined by examining the component of $S^z \setminus u$ that contains the $z$-puncture.
\begin{lemma} \label{lem:multilemma}
    Let $u \subset \cc{S^z}$ be a multicurve and $U$ be the component of $S^z \setminus u$ that contains the $z$-puncture.
    \begin{enumerate}
        \item The action of $K_u$ on $T$ has a global fixed point if and only if $\alpha$ can be isotoped to be disjoint from $\Phi(U)$ in $S$.
        \item When $K_u$ has a global fixed point, $T^u$ is either a single vertex $t \in T$ or a single edge $e \subset T$. Moreover, $T^u$ is an edge $e$ if and only if $U$ is a once-punctured annulus and each component of $\Phi(\partial U)$ is isotopic to the curve $\alpha_e$ of $\alpha$.
        \item If $u$ contains a non-surviving curve, then $T^u$ is a single vertex.
        \item When $u$ consists only of surviving curves and $T^u$ is not an edge, then $t \in T^u$ if and only if $\hh_u \cap \widetilde{Y}_t^\circ \neq \emptyset$.
    \end{enumerate}
\end{lemma}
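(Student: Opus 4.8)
The plan is to exploit the dictionary, recorded above, between the subsurface $\Phi(U)\subseteq S$, its lifted convex hull $\hh_u\subseteq\HH^2$ with $p(\hh_u^{\circ})$ isotopic to $\Phi(U)$, and the action of $K_u$ on the Bass--Serre tree $T=T_\alpha$. The one geometric input I would isolate first is a \emph{hull--tree correspondence}: if $\widetilde{\alpha}_e$ is the lift of a component of $\alpha$ dual to an edge $e$, then $\hh_u$ meets both open half-planes bounded by $\widetilde{\alpha}_e$ if and only if both components of $T\setminus e^{\circ}$ carry limit points of $K_u$, i.e.\ if and only if $e$ lies in the minimal $K_u$-invariant subtree. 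The forward direction is immediate from convexity of $\hh_u$ together with the fact that a geodesic joining a point of $\widetilde{Y}_t^{\circ}$ to a point of $\widetilde{Y}_{t'}^{\circ}$ crosses exactly the lifts $\widetilde{\alpha}_{e'}$ with $e'$ on the segment $[t,t']$. For the reverse direction, the cleanest route is to fix a point $x$ in a region $\widetilde{Y}_{t^*}$ with $t^*$ in the minimal subtree and recall that the limit set of $K_u$ equals $\overline{K_u\cdot x}\cap\partial\HH^2$; since $K_u\cdot x\subseteq\bigcup_{t\in T^u}\widetilde{Y}_t$ lies on a single closed side of any $\widetilde{\alpha}_e$ with $e\notin T^u$, so does its closure, and hence so does $\hh_u$, the convex hull.

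Granting this, I would prove (4) first, as it is the engine. Assume $u$ consists of surviving curves and $T^u$ is not a single edge. For $\{t:\hh_u\cap\widetilde{Y}_t^{\circ}\neq\emptyset\}\subseteq T^u$: if $t\notin T^u$, let $e$ be the edge separating $t$ from $T^u$; the correspondence puts $\hh_u$ in the closed half-plane of $\widetilde{\alpha}_e$ on the $T^u$-side, which is disjoint from the open region $\widetilde{Y}_t^{\circ}$. For the reverse inclusion: when $K_u$ is non-elliptic, $T^u$ is the union of axes of loxodromic elements (\cite{MR1886668}), and the $\HH^2$-axis of a loxodromic $g\in K_u$ lies in $\hh_u$ and crosses exactly the regions $\widetilde{Y}_t^{\circ}$ with $t$ on the axis of $g$ in $T$; when $K_u$ is elliptic, $T^u=\{t_0\}$ and $\hh_u\subseteq\overline{\widetilde{Y}_{t_0}}$ with nonempty interior contained in $\widetilde{Y}_{t_0}^{\circ}$ --- here one uses that surviving curves force $\Phi(U)$ to be a genuine subsurface (never a disk or once-punctured disk) or an annulus, the annular case being covered by replacing $\hh_u^{\circ}$ with the axis of the core curve.

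Statement (1) then reads off: $K_u$ has a global fixed point on $T$ if and only if it is elliptic, if and only if (by the correspondence) $\hh_u$ transversally crosses no $\widetilde{\alpha}_e$, which --- realizing $\Phi(U)$ and $\alpha$ geodesically via $p$ --- holds precisely when the geodesic realization of $\Phi(U)$ is disjoint from the geodesic multicurve $\alpha$, i.e.\ when $\alpha$ can be isotoped off $\Phi(U)$; the case $K_u=1$ (a disk $\Phi(U)$) is automatic. For (2), an elliptic $K_u$ fixing more than a vertex must fix an edge, hence is conjugate into --- and therefore equal to --- an edge stabilizer $\gen{\alpha_j}$, forcing $\hh_u$ to be a single lift $\widetilde{\alpha}_e$ and hence, by the dictionary, forcing $\Phi(U)$ to be an annular neighborhood of $\alpha_j=\alpha_e$, i.e.\ $U$ a once-punctured annulus with both boundary curves mapping to $\alpha_e$; in all other subcases the maximal fixed subtree is a single vertex (the trivial case $K_u=1$ being the vertex carrying the chosen lift $\widetilde{z}$). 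For (3), a non-surviving curve of $u$ forces $\Phi(U)$ to be a disk (so $K_u=1$, and $T^u$ is a single vertex by that convention) or a once-punctured disk about a puncture $w$ of $S$ (so $K_u=\gen{p_w}$ is parabolic); in the latter case $p_w$ preserves a horoball disjoint from $p^{-1}(\alpha)$, hence fixes the dual vertex, but preserves no complete geodesic, hence no lift of $\alpha$, so it fixes no incident edge and $T^u$ is a single vertex.

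I expect the main obstacle to be the reverse direction of the hull--tree correspondence together with the careful bookkeeping --- across the trivial, parabolic, loxodromic-cyclic, and higher-rank-elliptic cases for $K_u$ --- confirming that ``$\hh_u$ lies on one side of $\widetilde{\alpha}_e$'' is exactly ``$e\notin T^u$''; everything else is a translation through the already-established dictionary $p(\hh_u^{\circ})\simeq\Phi(U)$. A secondary nuisance is pinning down the conventions for $\hh_u$ and $T^u$ when $K_u$ is trivial or parabolic, so that the degenerate subcases of (2) and (3) become literal instances of the general statements rather than exceptions.
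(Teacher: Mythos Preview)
The paper does not prove this lemma; it simply records it as \cite[Lemma 3.1]{MR4632569} and moves on. Your proposal therefore supplies substantially more than the paper does. The route you take --- a hull--tree correspondence matching the lifts $\widetilde{\alpha}_e$ crossed by $\hh_u$ to the edges lying in $T^u$, followed by a case analysis on the type of $K_u$ --- is the natural one and is essentially how the argument runs in the cited source.

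Two small remarks on your sketch. First, the case $K_u=1$ that you flag under (3) does not actually arise: an essential non-surviving curve $c\in u$ must bound a twice-punctured disk $D$ in $S^z$ (punctures $z$ and some original puncture $w$ of $S$), and since $D$ is a pair of pants it contains no further essential curves of $u$, so $U=D$ and $\Phi(U)$ is a once-punctured disk about $w$. Thus $K_u=\gen{p_w}$ is always cyclic parabolic in this case, and your parabolic argument suffices on its own. Second, your treatment of the elliptic subcase in (4) presupposes that $T^u$ is a single vertex (given that it is not an edge), which is exactly the content of (2); since your proof of (2) is independent of (4), this is only a matter of ordering, not a circularity. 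Neither point is a genuine gap.
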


The invariant subtrees associated to nested simplices must intersect non-trivially, \cite[Lemma 3.2]{MR4632569}, which allows us to produce paths in $T$ from paths in $\cc{S^z}$.
\begin{lemma} \label{lem:nested}
    Let $u,w$ be simplices of $\cc{S^z}$. If $u \subseteq w$, then $T^u \cap T^w \neq \emptyset$.
\end{lemma}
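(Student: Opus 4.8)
The plan is to derive the lemma from a single fact about isometric actions on trees, after recording the key group-theoretic input.

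\emph{Step 1: $K_w \leq K_u$.} Since $u$ is a face of the simplex $w$, the multicurve underlying $u$ is a sub-multicurve of the one underlying $w$. Hence the component $W$ of $S^z \setminus w$ containing the $z$-puncture is contained in the component $U$ of $S^z \setminus u$ containing the $z$-puncture. By the point-push description of the Birman kernel recalled after Theorem~\ref{thm:tree-curve-complex}, every element of $K_w$ is represented by a point push around a loop homotopic into $W$; such a loop is then homotopic into $U$ as well, so the same element stabilizes $u$ and therefore lies in $K_u$. In particular the subtree $T^u$, which is $K_u$-invariant by construction, is also $K_w$-invariant; and both $T^u$ and $T^w$ are nonempty subtrees of $T$ by their definitions.

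\emph{Step 2: a bridge argument.} Suppose toward a contradiction that $T^u \cap T^w = \emptyset$. Then there is a bridge, i.e.\ a geodesic segment $[p,q]$ with $p \in T^w$, $q \in T^u$, and $[p,q] \cap T^w = \{p\}$, $[p,q] \cap T^u = \{q\}$; here $p \neq q$ since the subtrees are disjoint, $p$ is the unique point of $T^w$ at distance $d(T^w, T^u)$ from $T^u$, and $q$ is the unique point of $T^u$ at distance $d(T^w,T^u)$ from $T^w$. The group $K_w$ acts on $T$ by isometries (as $K_w \leq \pi_1 S \leq \Gamma_H$) and preserves both $T^w$ and $T^u$ by Step 1, so for $g \in K_w$ the point $g(p)$ again lies in $T^w$ at distance $d(T^w, T^u)$ from $T^u$; by uniqueness $g(p) = p$, and likewise $g(q) = q$. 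Thus $K_w$ fixes the point $q$. In particular $K_w$ has a global fixed point in $T$, so by definition $T^w$ is the maximal subtree of $T$ fixed by $K_w$, and hence $q \in T^w$. Then $q \in T^w \cap T^u$, contradicting disjointness. Therefore $T^u \cap T^w \neq \emptyset$.

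\emph{On the difficulty.} The argument is short, and the one step that is not purely formal is the containment $K_w \leq K_u$ in Step 1: it rests on the standard fact that an element of $\pi_1 S$ stabilizes a multicurve simplex of $\cc{S^z}$ exactly when it is a point push around a loop disjoint from that multicurve. One could instead argue via convex hulls — $K_w \leq K_u$ gives $\hh_w \subseteq \hh_u$, and Lemma~\ref{lem:multilemma}(4) then identifies $T^u$ and $T^w$ with the sets of vertices dual to the complementary regions meeting the respective hulls, whence $T^w \subseteq T^u$ — but this route forces one to treat separately the exceptional configurations of Lemma~\ref{lem:multilemma} (when $T^u$ or $T^w$ is an edge, when $u$ contains a non-surviving curve, or when a hull lies inside $p^{-1}(\alpha)$), so the tree-theoretic argument above is cleaner. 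Everything else is a formal consequence of the definitions of $T^u$ and $T^w$.
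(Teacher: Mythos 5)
Your proof is correct. The paper itself does not prove Lemma~\ref{lem:nested} --- it defers to \cite[Lemma 3.2]{MR4632569} --- so your argument supplies a complete, self-contained justification. Both of your steps check out: the containment $K_w \leq K_u$ follows exactly as you say from the point-push description of $\pi_1S$-stabilizers ($W \subseteq U$, so a push around a loop in $W$ is a push around a loop in $U$), and since $T^u$ is $K_u$-invariant in either of its two defining cases (minimal invariant subtree or fixed-point set), it is $K_w$-invariant. The bridge argument then runs cleanly: the action of $\pi_1S$ on $T$ is without inversions (a torsion-free, orientation-preserving isometry of $\HH^2$ stabilizing a component of $p^{-1}(\alpha)$ cannot swap its sides), so both $T^u$ and $T^w$ are closed simplicial subtrees, the bridge between them is unique if they are disjoint, and $K_w$ must fix both of its endpoints; the forced global fixed point $q$ then lies in the maximal $K_w$-fixed subtree, which by definition would be $T^w$, contradicting disjointness. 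A nice feature of your version is that this single argument absorbs all the case distinctions (whether each of $K_u$, $K_w$ acts with or without a global fixed point) that a more direct approach --- e.g.\ observing that when $K_w$ has no fixed point its minimal invariant subtree $T^w$ lies inside every nonempty $K_w$-invariant subtree, hence inside $T^u$ --- would have to treat separately.
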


We now discuss the invariant subtree associated to $G$. Since $G < \mcg{S^z}$ is purely pseudo-Anosov and torsion-free, no element of $G$ fixes any simplex of $\cc{S^z}$. Hence, $G$ acts freely on $T$ as its edges and vertices are $\Gamma_H$-equivariantly identified with simplices of $\cc{S^z}$ in $\Phi^{-1}(\alpha)$. We now define $T^G$ to be the minimal $G$-invariant subtree of $T$; $T^G$ is the union of the axes of loxodromic elements of $G$. A compact fundamental domain for this action can be found by taking the minimal subtree containing: 1) a base vertex $v \in T^G$, and 2) each translate of $v$ by a finite set of generators of $G$. Thus, the action of $G$ on $T^G$ gives $G$ a graph of groups decomposition with trivial vertex and edge groups. From the Bass-Serre structure theorem, we conclude the following lemma.
\begin{lemma} \label{lem:free}
    The group $G$ is free. Moreover, the tree $T^G$ has uniformly finite valence and a free, cocompact $G$-action.
\end{lemma}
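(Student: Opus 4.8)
The plan is to establish the three assertions in the order: the $G$-action on $T^G$ is free and cocompact; hence $G$ is free by Bass--Serre theory; hence $T^G$ has uniformly finite valence by an elementary covering-space argument. Freeness of the action is already available from the discussion above: $G$ is torsion-free and purely pseudo-Anosov, so no nontrivial element of $G$ stabilizes a simplex of $\cc{S^z}$, and since the vertices and edges of $T$ are $\Gamma_H$-equivariantly simplices of $\cc{S^z}$ lying in $\Phi^{-1}(\alpha)$, $G$ acts freely on $T$ — and without inversions, since an inverted edge would force some $g^2\ne 1$ to fix a vertex. I would then observe that $G$ is infinite, because $G_0\ne 1$; as a nontrivial isometry acting freely without inversions on a tree is loxodromic, the $G$-action on $T$ has no global fixed point, so $T^G$, the union of the axes of loxodromic elements of $G$, is a nonempty $G$-invariant subtree.

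The main step is cocompactness. I would fix a finite generating set $g_1,\dots,g_s$ of $G$ and a vertex $v\in T^G$, and let $D\subseteq T$ be the minimal subtree (convex hull) spanned by $\{v,g_1v,\dots,g_sv\}$. Since $T$ is simplicial, $D$ is a finite union of finite geodesic segments, hence a compact finite subtree, and $D\subseteq T^G$ because $T^G$ is convex and contains these vertices. The key claim is that $G\cdot D=T^G$: the set $G\cdot D$ is $G$-invariant and connected, since $g_iD$ shares the vertex $g_iv$ with $D$, so $D\cup\bigcup_i g_iD$ is connected, and an induction on word length in the $g_i^{\pm1}$ shows every translate $gD$ is joined to $D$ by a chain of translates whose consecutive members share a vertex. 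Thus $G\cdot D$ is a nonempty $G$-invariant subtree, so $T^G\subseteq G\cdot D$ by minimality; with $D\subseteq T^G$ this gives $G\cdot D=T^G$, exhibiting $D$ as a compact fundamental domain and $T^G/G$ as a finite graph $\Theta$.

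From a free, cocompact action the two remaining conclusions follow immediately. The quotient map $q\colon T^G\to\Theta=T^G/G$ is a covering map of graphs with deck group $G$; as $T^G$ is a tree it is the universal cover of $\Theta$, so $G\cong\pi_1(\Theta)$ is free of finite rank $1-\chi(\Theta)$ — this is precisely the Bass--Serre statement that a group acting freely on a tree is free, with finiteness of rank supplied by compactness of the quotient. Since a covering map of graphs is a local isomorphism, each vertex of $T^G$ has the same valence as its image in $\Theta$; as $\Theta$ has only finitely many vertices, this bounds the valences of vertices of $T^G$ uniformly, which is the asserted uniform finiteness of valence.

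I do not expect a genuine obstacle: every step is standard Bass--Serre theory or covering-space theory. The only points deserving care are the nonemptiness of $T^G$ (which uses that $G$ is infinite, in turn coming from $G_0\ne 1$ and purity) and the compactness of the fundamental domain $D$ (which uses that $T$ is a \emph{simplicial} tree, so geodesics between vertices are finite, rather than any local finiteness of $T$ itself -- indeed $T$ generally has vertices of infinite valence).
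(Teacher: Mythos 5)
Your proposal is correct and follows essentially the same route as the paper, which obtains freeness of the action from pure pseudo-Anosovness together with the identification of vertices and edges of $T$ with simplices of $\cc{S^z}$, takes the same compact fundamental domain (the minimal subtree spanned by a base vertex and its translates under a finite generating set), and then invokes Bass--Serre theory. You merely spell out details the paper leaves implicit, such as the verification that $G\cdot D = T^G$ and the deduction of uniformly finite valence from the finite quotient graph.
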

The compact graph $T^G/G$ has finitely many edges and vertices. We let $E,V > 0$ be the number of edges and vertices, respectively. Equivalently, $E,V$ is the number of $G$-orbits of edges and vertices in $T^G$, respectively.

Since $G_0$ is a normal, infinite subgroup of $G$, the tree $T^G$ is also the minimal invariant tree of the action of $G_0$ on $T$; $T^G$ is also the union of the axes of the loxodromic elements of $G_0$. Let $\hh_G \subset \HH^2$ denote the convex hull of the limit set of $G$ in $\partial \HH^2$. Using a similar argument to Lemma \ref{lem:multilemma}(4), \cite[Lemma 3.5]{MR4632569} provides a similar statement for $T^G$ and $\hh_G$.
\begin{lemma} \label{lem:TG4}
    A vertex $t \in T$ is a vertex of $T^G$ if and only if $\hh_G \cap \widetilde{Y}_t^\circ \neq \emptyset$.
\end{lemma}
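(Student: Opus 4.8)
The plan is to deduce Lemma~\ref{lem:TG4} from the description of $T^G$ as the union of the $T$-axes of the loxodromic elements of $G_0$, by establishing a precise local picture relating such an axis to the regions $\widetilde{Y}_t^\circ$. First I record the relevant properties of $G_0$. Since $G_0 < \pi_1S$ it acts on $\HH^2$ by isometries, and every nontrivial $g \in G_0$ is loxodromic on $\HH^2$: the only non-loxodromic nontrivial elements of $\pi_1S$ are peripheral, hence not filling, hence not pseudo-Anosov in $\mcg{S^z,z}$ by Theorem~\ref{thm:kra}, contradicting that $G$ is purely pseudo-Anosov. Likewise every nontrivial $g \in G_0$ is loxodromic on $T$: it fixes no edge $e$, since $\Stab_{\pi_1S}(\widetilde{\alpha}_e)$ consists of powers of the reducing curve $\alpha_{j(e)}$, and it fixes no vertex $t$, since $\Stab_{\pi_1S}(\widetilde{Y}_t)$ is carried by the proper subsurface $Y_{k(t)}$, and a filling loop lies in neither (and the action is orientation preserving, so there are no inversions). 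Finally, $G_0$ is infinite and normal in $G$, so $\Lambda(G) = \Lambda(G_0)$; thus $\hh_G$ is the convex hull of $\Lambda(G_0)$ and is $G_0$-invariant, and $\Lambda(G_0)$ is the closure of the set of fixed points $\{g^+, g^-\}$ of loxodromic $g \in G_0$ (valid for any infinite torsion-free discrete subgroup of $\mathrm{Isom}^+(\HH^2)$, and literally an equality when $G_0$ is infinite cyclic).

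The key local claim is: for loxodromic $g \in G_0$ with $\HH^2$-axis $\gamma_g$ and $T$-axis $A^T_g$, a vertex $t$ lies on $A^T_g$ if and only if $\gamma_g$ meets $\widetilde{Y}_t^\circ$. To see this, note $\gamma_g$ and each $\widetilde{\alpha}_e$ are distinct geodesics (as $g$ is not a power of a simple closed curve), so $\gamma_g$ crosses transversally, in one point, any component of $p^{-1}(\alpha)$ it meets; and it meets at least one, for otherwise $\gamma_g$ and both endpoints $g^{\pm}$ would lie in the closure of a single region $\widetilde{Y}_t$, forcing $g$ to be carried by $Y_{k(t)}$, contradicting fillingness. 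If $\gamma_g$ crosses $\widetilde{\alpha}_e$ then it crosses every $g^n(\widetilde{\alpha}_e)$, and these are pairwise distinct since $g$ fixes no edge, so the crossing points escape to both ends $g^{\pm}$; hence $\gamma_g$ crosses a bi-infinite sequence $(\widetilde{\alpha}_{e_i})_{i \in \ZZ}$ of components of $p^{-1}(\alpha)$. Between consecutive crossings $\gamma_g$ lies in exactly one region $\widetilde{Y}_{t_i}^\circ$, and since each closed region $\widetilde{Y}_t$ is convex (an intersection of half-planes bounded by the components of $p^{-1}(\alpha)$ incident to $t$), a geodesic meets it in a connected set, so each region occurs at most once; thus the dual edges $(e_i)$ form an embedded, $g$-invariant, bi-infinite geodesic in $T$, which must be $A^T_g$, with vertex set exactly $\{t_i\}$. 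The claim follows, noting that $\gamma_g$ meets $\widetilde{Y}_{t_i}^\circ$ precisely for these $t_i$.

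Granting the claim, both implications are short. If $t$ is a vertex of $T^G$, then $t$ lies on $A^T_g$ for some loxodromic $g \in G_0$, so $\gamma_g$ meets $\widetilde{Y}_t^\circ$; since the endpoints of $\gamma_g$ are $g^{\pm} \in \Lambda(G_0)$ we have $\gamma_g \subset \hh_G$, hence $\hh_G \cap \widetilde{Y}_t^\circ \neq \emptyset$. Conversely, suppose $t \notin T^G$; since $T^G$ is a nonempty subtree, the geodesic in $T$ from $t$ to $T^G$ begins with an edge $e$ whose removal separates $t$ from $T^G$, so no $A^T_g$ crosses $e$, and hence by the claim no $\gamma_g$ crosses $\widetilde{\alpha}_e$. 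Each $\gamma_g$ therefore lies in the closed half-plane $P$ bounded by $\widetilde{\alpha}_e$ on the side meeting $T^G$, so $g^{\pm} \in \overline{P} \cap \partial\HH^2$ for every loxodromic $g \in G_0$; taking closures gives $\Lambda(G_0) \subset \overline{P} \cap \partial\HH^2$, hence $\hh_G \subset P$. As $\widetilde{Y}_t^\circ$ lies in the open complementary half-plane, $\hh_G \cap \widetilde{Y}_t^\circ = \emptyset$, which is the contrapositive of the remaining direction.

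The only delicate point is the key local claim, namely the interaction between convexity of the regions $\widetilde{Y}_t$ in $\HH^2$ and the combinatorics of $T$; but this is exactly the mechanism already used to prove Lemma~\ref{lem:multilemma}(4), now applied with a single loxodromic $g \in G_0$ in place of the simplex stabilizer $K_u$, so no genuinely new idea is needed. (When $G_0$ is infinite cyclic, $\hh_G$ is a single geodesic line and $T^G$ a single axis, and the statement is immediate from the claim applied to a generator.)
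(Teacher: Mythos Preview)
Your proof is correct and follows exactly the route the paper indicates: the paper does not give its own argument here but simply cites \cite[Lemma 3.5]{MR4632569} and remarks that it is proved ``using a similar argument to Lemma~\ref{lem:multilemma}(4)''; you have supplied precisely that argument, replacing the simplex stabilizer $K_u$ by $G_0$ and working axis-by-axis to relate $\HH^2$-axes of loxodromic elements of $G_0$ to their $T$-axes. One minor caution: your assertion that $\Lambda(G) = \Lambda(G_0)$ invokes the standard fact for a normal infinite subgroup, but here $G$ acts on $\HH^2$ only by homeomorphisms (not isometries), so the meaning of $\Lambda(G)$ is slightly delicate; however, your argument only ever uses that $\hh_G$ is the convex hull of $\Lambda(G_0)$, which is exactly how the paper treats it, so this does not affect correctness.
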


\subsection{Subtree Decomposition}
An important object in this work is the intersection of trees $T^u \cap T^G$ for a simplex $u \subset \cc{S^z}$. We record some useful definitions and results here before proceeding. The lemmas in this section originate from \cite[Section 4.2]{MR4632569}.

\begin{lemma} \label{lem:treeintersectionGeqv}
    For any $g \in G$ and any simplex $u \subset \cc{S^z}$ we have
    $$ g(T^u \cap T^G) = T^G \cap T^{g(u)}. $$
\end{lemma}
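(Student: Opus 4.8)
The plan is to reduce the asserted equality to three elementary facts about the $G$-action on $T$: that each $g\in G$ acts on $T$ as a bijection (in fact an isometry, since the $\Gamma_H$-action on $T$ is isometric by Section \ref{sec:gammaaction}) and hence satisfies $g(A\cap B)=g(A)\cap g(B)$; that $T^G$ is $G$-invariant, which is immediate from its definition as the minimal $G$-invariant subtree; and that $g(T^u)=T^{g(u)}$. Granting these, $g(T^u\cap T^G)=g(T^u)\cap g(T^G)=T^{g(u)}\cap T^G$, which is the claim.

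The first two facts are immediate, so the content lies in establishing $g(T^u)=T^{g(u)}$. First I would identify the relevant simplex stabilizer: for $g\in G<\mcg{S^z,z}$ one has $K_{g(u)}=\Stab_{\pi_1 S}(g(u))=g\,\Stab_{\pi_1 S}(u)\,g^{-1}=gK_ug^{-1}$, where the middle equality uses that $\pi_1 S$ is normal in $\mcg{S^z,z}$, so that $g^{-1}hg\in\pi_1 S$ precisely when $h\in\pi_1 S$. Next, recall that $T^u$ is defined as the minimal $K_u$-invariant subtree of $T$ when the $K_u$-action has no global fixed point, and as the maximal $K_u$-fixed subtree when it does. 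Since $g$ is an isometry of $T$, the subtree $g(T^u)$ is invariant under $gK_ug^{-1}=K_{g(u)}$, because $(gkg^{-1})(g(T^u))=g(k(T^u))=g(T^u)$ for $k\in K_u$; likewise, if $K_u$ fixes $T^u$ pointwise then $K_{g(u)}$ fixes $g(T^u)$ pointwise. Moreover the dichotomy itself is conjugation-invariant: $K_u$ has a global fixed point in $T$ if and only if $gK_ug^{-1}$ does. In the no-fixed-point case, minimality transfers: any $K_{g(u)}$-invariant subtree $T'\subseteq g(T^u)$ pulls back to a $K_u$-invariant subtree $g^{-1}(T')\subseteq T^u$, which forces $g^{-1}(T')=T^u$ and hence $T'=g(T^u)$; so $g(T^u)$ is the minimal $K_{g(u)}$-invariant subtree, namely $T^{g(u)}$. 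In the fixed-point case the identical argument, with ``maximal fixed subtree'' in place of ``minimal invariant subtree'', again yields $g(T^u)=T^{g(u)}$.

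I do not expect a serious obstacle here; the only points requiring care are (i) that the definition of $T^u$ splits into two cases and one must verify both transform equivariantly, and (ii) that the stabilizer conjugation is computed inside $\pi_1 S$ and relies on the normality of $\pi_1 S$ in $\mcg{S^z,z}$. One could alternatively phrase the whole argument in terms of simplices of $\cc{S^z}$ using the $\Gamma_H$-equivariance of the identification $T\to\Phi^{-1}(\alpha)$ from Section \ref{sec:gammaaction}, but this is unnecessary since $T^u$ is defined purely from the $K_u$-action on $T$.
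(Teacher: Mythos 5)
Your proposal is correct and follows the same route as the paper: the paper's proof likewise uses $g(T^G)=T^G$, the conjugation identity $K_{g(u)}=gK_ug^{-1}$, and the resulting equality $g(T^u)=T^{g(u)}$ checked against the two-case definition of $T^u$. You simply supply more detail (the normality of $\pi_1S$ in $\mcg{S^z,z}$ and the transfer of minimality/maximality under conjugation) than the paper's terse argument.
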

\begin{proof}
    Since $g \in G$, we have $g(T^G) = T^G$. Since $K_{g(u)} = gK_ug^{-1}$, by considering the two cases for $T^u$ (minimal invariant or maximal fixed), we see that $g(T^u) = T^{g(u)}$. Therefore, we have
    $$ T^{g(u)} \cap T^G = g(T^u) \cap g(T^G) = g(T^u \cap T^G) $$
\end{proof}

Lemma \ref{lem:multilemma}(3) implies that $T^u \cap T^G \subset T^u$ is uninteresting when $u$ contains non-surviving curves, so we will frequently restrict to $u \subset \ccs{S^z}$ going forwards.

\begin{definition}
    Given a simplex $u \subset \ccs{S^z}$, we say that a vertex $t \in T^u \cap T^G$ is \emph{hull-type} if
    $$ \hh_u \cap \hh_G \cap \widetilde{Y}_t^\circ \neq \emptyset. $$
    Any vertex that is not hull-type is called \emph{parallel-type}.
\end{definition}

The arrangement of hull-type vertices in $T^u \cap T^G$ is nice in that they span a subtree.
\begin{lemma}
    If the set of hull-type vertices is non-empty, then it spans a subtree of $T^u \cap T^G$. That is to say, every vertex of the minimal subtree containing all hull-type vertices is of hull-type.
\end{lemma}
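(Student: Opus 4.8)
The plan is to reduce the assertion to a statement about geodesics in the tree $T$, and then deduce that statement from convexity of the hulls in $\HH^2$. First I would note that $T^u \cap T^G$ is itself a subtree of $T$, being an intersection of two subtrees (both $T^u$ and $T^G$ are subtrees by construction), and it is non-empty as soon as some hull-type vertex exists. Hence the minimal subtree $M \subset T^u \cap T^G$ spanned by the set $W$ of hull-type vertices is well defined, and --- by the usual description of the convex hull of a vertex set in a tree --- every vertex of $M$ lies on the geodesic $[t_1, t_2]$ of $T$ for some pair $t_1, t_2 \in W$. So it suffices to prove: \emph{if $t_1, t_2$ are hull-type, then so is every vertex $t$ on the geodesic $[t_1, t_2]$.}

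To prove this, I would choose points $x_i \in \hh_u \cap \hh_G \cap \widetilde{Y}_{t_i}^\circ$ for $i = 1, 2$, which exist by the definition of hull-type. Since $\hh_u$ and $\hh_G$ are convex subsets of $\HH^2$ --- each being the convex hull of a limit set --- so is $\hh_u \cap \hh_G$, and therefore the hyperbolic geodesic segment $[x_1, x_2]$ lies entirely in $\hh_u \cap \hh_G$. The heart of the argument is to record how this segment meets $p^{-1}(\alpha)$: it is compact, the components of $p^{-1}(\alpha)$ are pairwise disjoint complete geodesics lying distance at least $2$ apart (by our Collar Lemma normalization of the metric), and two distinct complete geodesics of $\HH^2$ meet in at most one point; hence $[x_1, x_2]$ crosses only finitely many components of $p^{-1}(\alpha)$, and crosses each at most once. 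Reading off, in order, the components of $\HH^2 \setminus p^{-1}(\alpha)$ that $[x_1, x_2]$ passes through, via their dual vertices, produces a walk in $T$ from $t_1$ to $t_2$ that traverses no edge twice; since a walk in a tree with no repeated edge is an embedded path, this walk is exactly the geodesic $[t_1, t_2]$. Consequently, for every vertex $t$ on $[t_1, t_2]$ the segment $[x_1, x_2]$ contains a non-degenerate sub-arc inside $\widetilde{Y}_t^\circ$, which shows $\hh_u \cap \hh_G \cap \widetilde{Y}_t^\circ \neq \emptyset$; and $t \in T^u \cap T^G$ because the latter is a subtree containing both $t_1$ and $t_2$. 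Thus $t$ is hull-type, as required.

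The step I expect to be the main (really the only) obstacle is the claim that the walk of complementary components traced by the segment $[x_1, x_2]$ cannot backtrack and hence coincides with the tree geodesic $[t_1, t_2]$. This rests on the fact that $[x_1, x_2]$ crosses each component of $p^{-1}(\alpha)$ at most once, combined with the elementary fact that an edge-simple walk in a tree is embedded. The remaining ingredients --- convexity of $\hh_u$ and $\hh_G$, the reduction to pairs of hull-type vertices via the convex-hull description of $M$, and subtree-ness of $T^u \cap T^G$ --- are routine.
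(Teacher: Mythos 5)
Your proposal is correct and takes essentially the same route as the paper: pick points of $\hh_u \cap \hh_G$ in the two regions, join them by a hyperbolic geodesic, and use convexity of the hulls to see that this segment witnesses every intermediate vertex being hull-type. The only difference is at the final step, where the paper invokes the equivariant map $\HH^2 \to T$ to identify the image of $[x,x']$ with the tree geodesic, whereas you argue directly that the ordered sequence of components of $\HH^2 \setminus p^{-1}(\alpha)$ crossed by $[x_1,x_2]$ is an edge-simple walk and hence the geodesic $[t_1,t_2]$ --- a slightly more self-contained justification of the same point.
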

\begin{proof}
    If $t,t' \in T^u \cap T^G$ are hull-type vertices, let $x,x' \in \hh_G \cap \hh_u$ be points with $x \in \widetilde{Y}_t^\circ$ and $x' \in \widetilde{Y}_{t'}^\circ$. By convexity, the geodesic $[x,x'] \subset \HH^2$ is also contained in $\hh_G \cap \hh_u$. Adjusting our equivariant map $\HH^2 \to T$ if necessary, we may assume it maps $[x,x']$ to the geodesic from $t$ to $t'$ in $T^u \cap T^G$. Every vertex of this geodesic is therefore of hull-type.
\end{proof}

The subtree from the previous lemma is called the \emph{hull subtree}, and we denote it $T^\hh$. We call any edge $e \subset T^\hh$ hull-type. Each maximal connected subgraph in the complement of $T^\hh$ in $T^u \cap T^G$ is also a subtree. We call these components \emph{parallel subtrees}, and we will sometimes write $T^{||}$ when referring to one such subtree. We call any edge $e \subset T^{||}$ parallel-type. To avoid casework, we allow the possibility that $T^\hh$ is empty (when there are no hull-type vertices), in which case the entirety of $T^u \cap T^G$ is the unique parallel subtree. On the other hand, if $T^u \cap T^G = T^\hh$, then we consider any parallel subtree to be empty.

The reason for the name parallel-type is justified by the following lemma; see \cite[Lemma 4.10]{MR4632569}.
\begin{lemma} \label{lem:parallelstructure}
    Let $u \subset \ccs{S^z}$ be a multicurve such that $T^u$ has infinite diameter, and let $t_0, \ldots, t_n$ be the vertices of an edge path in $T^u \cap T^G$. Let $e_i$ be the edge from $t_{i-1}$ to $t_i$ and $\widetilde{\alpha}_i$ be the geodesic in $p^{-1}(\alpha)$ that is dual to the edge $e_i$. If each $t_i$ is parallel-type, then there exist geodesics $\delta_u \subset \partial \hh_u$ and $\delta_G \subset \partial \hh_G$ so that
    \begin{itemize}
        \item $\delta_u$ and $\delta_G$ intersect each $\widetilde{\alpha}_i$ transversely
        \item $\delta_u$ and $\delta_G$ do not intersect in $\widetilde{Y}_{t_i}$ for any $i \in \zeroto{n}$
    \end{itemize}
\end{lemma}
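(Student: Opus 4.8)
The plan is to turn the hypotheses into the statement that $\hh_u$ and $\hh_G$ are two convex sets which both ``thread'' the strip of regions $\widetilde{Y}_{t_0},\dots,\widetilde{Y}_{t_n}$ while remaining separated inside each region, and then to read off $\delta_u$ and $\delta_G$ from the boundaries of these two convex sets. This is exactly \cite[Lemma 4.10]{MR4632569}, and the argument is insensitive to the rank of $H$.

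First I would dispose of degeneracies. Since $T^u$ has infinite diameter, Lemma \ref{lem:multilemma}(1)--(3) forces $u$ to consist only of surviving curves and $K_u$ to act on $T$ without a global fixed point, so $T^u$ is the minimal $K_u$-invariant subtree and, by Lemma \ref{lem:multilemma}(4), a vertex $t$ lies in $T^u$ precisely when $\hh_u\cap\widetilde{Y}_t^\circ\neq\emptyset$; Lemma \ref{lem:TG4} gives the analogue for $\hh_G$. As $t_0,\dots,t_n$ and the edges $e_1,\dots,e_n$ all lie in $T^u\cap T^G$, both $\hh_u$ and $\hh_G$ meet every $\widetilde{Y}_{t_i}^\circ$; moreover, since the geodesic $\widetilde{\alpha}_i$ dual to $e_i$ separates $\widetilde{Y}_{t_{i-1}}$ from $\widetilde{Y}_{t_i}$ in $\HH^2$, convexity of each hull forces it to have points strictly on both sides of $\widetilde{\alpha}_i$, i.e.\ to cross $\widetilde{\alpha}_i$ transversely.

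Next I would extract the essential consequence of the hypothesis that every $t_i$ is parallel-type: on each $\widetilde{\alpha}_i$ the crossing intervals $\hh_u\cap\widetilde{\alpha}_i$ and $\hh_G\cap\widetilde{\alpha}_i$ are disjoint. A point common to these intervals and lying in the relative interior of either of them would be an interior point of the corresponding hull, so a neighbourhood of it (taken within that hull) would lie in $\hh_u\cap\hh_G$, would straddle $\widetilde{\alpha}_i$, and hence would meet both $\widetilde{Y}_{t_{i-1}}^\circ$ and $\widetilde{Y}_{t_i}^\circ$, contradicting parallel-type of $t_{i-1}$ or $t_i$. In the remaining case, where the two closed intervals meet only at a common endpoint $z$, the fact that they leave $z$ in opposite directions forces the side of $\hh_u$ through $z$ and the side of $\hh_G$ through $z$ to coincide; but then $\hh_u\cap\hh_G$ contains a subarc of that common side, which crosses $\widetilde{\alpha}_i$ at $z$ and hence meets some $\widetilde{Y}_{t_i}^\circ$, again a contradiction. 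In particular $\hh_u\cap\hh_G$ meets no $\widetilde{\alpha}_i$, so it is disjoint from the open strip $\bigcup_{i=0}^{n}\widetilde{Y}_{t_i}^\circ\cup\bigcup_{i=1}^{n}\widetilde{\alpha}_i$.

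The final and most delicate step is to assemble $\delta_u$ and $\delta_G$. Inside each $\widetilde{Y}_{t_i}$ the sets $\hh_u\cap\widetilde{Y}_{t_i}$ and $\hh_G\cap\widetilde{Y}_{t_i}$ are disjoint convex regions, each stretching from $\widetilde{\alpha}_i$ to $\widetilde{\alpha}_{i+1}$; because $\hh_G\cap\widetilde{Y}_{t_i}$ is convex and spans the region, the part of $\partial\hh_u$ inside $\widetilde{Y}_{t_i}$ that faces $\hh_G$ is a single geodesic arc, hence lies on one side of $\hh_u$, and these arcs match up across consecutive regions at the endpoints of the crossing intervals along the $\widetilde{\alpha}_i$. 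Concatenating them produces a single side $\delta_u\subset\partial\hh_u$ that crosses every $\widetilde{\alpha}_i$ transversely --- it is a geodesic distinct from each $\widetilde{\alpha}_i$ and it meets the crossing interval of $\hh_u$ on $\widetilde{\alpha}_i$ --- and the symmetric construction yields $\delta_G\subset\partial\hh_G$. Finally $\delta_u\cap\widetilde{Y}_{t_i}\subseteq\hh_u\cap\widetilde{Y}_{t_i}$ and $\delta_G\cap\widetilde{Y}_{t_i}\subseteq\hh_G\cap\widetilde{Y}_{t_i}$ are disjoint for every $i$, as required. The main obstacle is precisely this organizational claim --- that the facing boundary arcs glue into one complete geodesic side of each hull rather than fragmenting, and that the degenerate configurations near the lateral boundary geodesics of the $\widetilde{Y}_{t_i}$ cause no trouble --- which is carried out in \cite[Lemma 4.10]{MR4632569} and whose proof transfers to our setting verbatim.
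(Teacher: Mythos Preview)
Your proposal is correct and matches the paper's approach: the paper gives no proof at all for this lemma, simply citing \cite[Lemma 4.10]{MR4632569} and noting (as you do) that the argument there is independent of the rank of $H$. Your sketch of the convexity/separation argument is more than the paper provides and correctly identifies the organizational claim about boundary arcs gluing into single sides as the point that is deferred to the reference.
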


\subsection{$G$- and $G_0$-quotients} \label{sec:Gquotient}
Since $G$ acts freely on $T^G$, there is a $G$-equivariant embedding $T^G \to \hh_G$ sending vertices inside the component they are dual to and sending edges to geodesic segments. Taking quotients by $G_0$, we get a surface $\Sigma_0$ with a spine $\sigma_0$
$$ T^G/G_0 = \sigma_0 \subset \Sigma_0 = \hh_G/G_0. $$

Each edge $e$ of $T^G$ intersects exactly one component $\widetilde{\alpha}_e \subset p^{-1}(\alpha)$ and we define $\widetilde{a}_e = \widetilde{\alpha}_e \cap \hh_G$. We write $a_\varepsilon \subset \Sigma_0$ for the image of $\widetilde{a}_e$ in $\Sigma_0$ where $e \subset T^G$ is an edge that projects to an edge $\varepsilon \subset \sigma_0$. Note that for any two edges $\varepsilon, \varepsilon'$ in $\sigma_0$, $a_\varepsilon \cap \varepsilon'$ is empty if $\varepsilon \neq \varepsilon'$, while $a_\varepsilon \cap \varepsilon'$ is a single point if $\varepsilon = \varepsilon'$.

For each vertex $t$ of $T^G$, the intersection $\widetilde{Y}_t \cap \hh_G$ is an even-sided polygon with sides alternating between arcs contained in $p^{-1}(\alpha)$ and those in $\partial \hh_G$. The sides in $p^{-1}(\alpha)$ are precisely the arcs $\widetilde{a}_e$ where $e$ is an edge of $T^G$ incident to $t$. We let $\widetilde{Z}_t \subset \hh_G$ be this polygon corresponding to the vertex $t \in T^G$, and we write $\partial_\alpha \widetilde{Z}_t$ to denote the union of the sides $\widetilde{a}_e$ over all edges $e$ incident to $t$.

Let $\widetilde{p}_0: \HH^2 \to \widetilde{S}_0 = \HH^2/G_0$ be the quotient by $G_0$, which contains $\Sigma_0$ as its convex core, and write $p_0 = \widetilde{p}_0|_{\hh_G}: \hh_G \to \Sigma_0$ for the restriction to $\hh_G$. Let $\eta: \widetilde{S}_0 \to S$ be the associated covering corresponding to $G_0 < \pi_1S$, so that $\eta \circ \widetilde{p}_0 = p$.

Now $\eta^{-1}(\alpha) \cap \Sigma_0$ is a union of the geodesic arcs $a_\varepsilon$ over all edges $\varepsilon$ in $\sigma_0$. The further restriction of $p_0$ to $\widetilde{Z}_t$ is injective on $\widetilde{Z}_t \setminus \partial_\alpha \widetilde{Z}_t$ and maps $\partial_\alpha \widetilde{Z}_t$ into $\eta^{-1}(\alpha)$. For a vertex $\tau \in \sigma_0$, write $Z_\tau = \widetilde{Z}_t$ where $t$ is a vertex of $T^G$ with $p_0(t) = \tau$, and write $Z_\tau \to \Sigma_0$ to denote the restriction of $p_0$. This map is injective, except possibly on the points of $\partial_\alpha \widetilde{Z}_t$. As an abuse of notation, we write $Z_\tau \subset \Sigma_0$ even though it is not necessarily embedded.

Since $G_0$ is a normal subgroup of $G$, we have an action of $G/G_0 \cong H \cong \ZZ^n$ on $\widetilde{S}_0$. Each element $\overline{g} \in G/G_0$ acts as a lift of $\phi(\overline{g}) \in H$ to $\widetilde{S}_0$ which agrees with the lift to $\HH^2$ chosen in Section \ref{sec:gammaaction}. The action of $G/G_0$ on $\widetilde{S}_0$ is free because the action of $G/G_0$ on $\sigma_0$ is free. The action of $G/G_0$ does not preserve $\Sigma_0$, but for $i \in \oneto{n}$ we can find homeomorphisms $\frakh_i: \widetilde{S}_0 \to \widetilde{S}_0$ so that $\frakh_i(\Sigma_0) = \Sigma_0$ and $\frakh_i$ is properly isotopic to the lift of $h_i$ to $\widetilde{S}_0$ via an isotopy that preserves $\eta^{-1}(\alpha)$. If the lift of $h_i$ sends a vertex $\tau$ to a vertex $\tau'$ in $\sigma_0$, then $\frakh_i(Z_\tau) = Z_{\tau'}$; in fact, we use this property to define the isotopy. By further proper isotopy preserving $\eta^{-1}(\alpha)$ and $\Sigma_0$, we may assume $\frakh_i(\sigma_0) = \sigma_0$. The action of $\calH := \gen{\frakh_1, \ldots, \frakh_n}$ on $\Sigma_0$ is a topological covering space action with compact quotient $\Sigma$ that contains $\sigma = \sigma_0/\calH$ as a spine.

The homomorphism $G \to \calH$ given by applying $\Phi_*$, lifting to $\widetilde{S}_0$, then applying the proper isotopy described above descends to an isomorphism $G/G_0 \cong \calH$. Moreover, the projection $T^G \to \sigma_0$ is equivariant with respect to this homomorphism. Via the isomorphism $G/G_0 \cong \calH$, we can identify the quotients $\sigma = \sigma_0/\calH = \sigma_0/(G/G_0) = T^G/G$,

\[\begin{tikzcd}
    T^G \ar{r} \ar{d} & \hh_G \ar[phantom]{r}[marking]{\subset} \ar{d}{p_0} & \HH^2 \ar{d}[swap]{\widetilde{p}_0} \ar{ddr}{p} \\
    \sigma_0 \ar{r} \ar{d} & \Sigma_0 \ar[phantom]{r}[marking]{\subset} \ar{d} & \widetilde{S}_0 \ar{dr}[swap]{\eta} \\
    \sigma \ar{r} & \Sigma && S.
\end{tikzcd}\]

Edges $\varepsilon$ and vertices $\tau$ in $\sigma_0$ inherit some properties from their corresponding $G_0$-orbits of edges and vertices upstairs in $T^G$. Specifically, we say that $\varepsilon$ is $\alpha_j$-dual if any $e$ such that $p_0(e) = \varepsilon$ is $\alpha_j$-dual; thus, $\varepsilon$ is twist if any $e$ with $p_0(e) = \varepsilon$ is twist. Similarly, we say that $\tau$ is $Y_k$-dual if any $t$ such that $p_0(t) = \tau$ is $Y_k$-dual; thus, $\tau$ is pseudo-Anosov if any $t$ with $p_0(t) = \tau$ is pseudo-Anosov. Note that all edges in the same $G/G_0$-orbit share the property of being $\alpha_j$-dual or twist; all vertices in the same $G/G_0$-orbit share the property of being $Y_k$-dual or pseudo-Anosov.

\subsection{Other useful lemmas}
We record a few other useful lemmas here.

Recall that $T = T_\alpha$ is the Bass-Serre tree dual to $p^{-1}(\alpha)$. For any component $\alpha_j \subset \alpha$, we can also consider $T_{\alpha_j}$, the Bass-Serre tree dual to $p^{-1}(\alpha_j)$. $G$ acts on each $T_{\alpha_j}$ for the same reason it acts on $T$. There is a natural collapsing map $c_{\alpha_j}: T \to T_{\alpha_j}$ obtained by collapsing every non-$\alpha_j$-dual edge, and this map descends to a $G$-equivariant map on the $G$-invariant subtrees $c_{\alpha_j}^G: T^G \to T_{\alpha_j}^G$.

\begin{lemma}
    For any component $\alpha_j \subset \alpha$, the collapsing map $c_{\alpha_j}^G: T^G \to T_{\alpha_j}^G$ is a quasi-isometry.
\end{lemma}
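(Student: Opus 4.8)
The plan is to show that $c_{\alpha_j}^G$ is surjective and a quasi-isometric embedding, the one non-formal ingredient being a uniform bound on how long a geodesic in $T^G$ can run through non-$\alpha_j$-dual edges. Throughout I use that, by Lemma \ref{lem:free}, $G$ acts freely and cocompactly on $T^G$, so $T^G$ is the universal cover of the finite graph $\sigma = T^G/G$ and $G \cong \pi_1(\sigma)$; write $E$ for the number of edges of $\sigma$. I also use that $G$ acts freely and without inversions on $T_{\alpha_j}$, for exactly the reason it acts freely on $T$: by Theorem \ref{thm:tree-curve-complex} and the equivariance discussion of Section \ref{sec:gammaaction}, the vertices and edges of $T_{\alpha_j}$ are $\Gamma_H$-equivariantly identified with simplices of $\cc{S^z}$ in $\Phi^{-1}(\alpha_j)$, and no nontrivial element of the purely pseudo-Anosov, torsion-free group $G$ fixes such a simplex. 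In particular every nontrivial $g \in G$ is loxodromic on both $T$ and $T_{\alpha_j}$, and $T_{\alpha_j}^G$ is the (nonempty) union of the axes of nontrivial elements of $G$. With this in hand two facts are immediate. First, for vertices $x,y \in T^G$ the collapse sends the geodesic $[x,y]\subset T^G$ onto the geodesic $[c_{\alpha_j}^G(x),c_{\alpha_j}^G(y)]\subset T_{\alpha_j}$, whose length equals the number $N(x,y)$ of $\alpha_j$-dual edges on $[x,y]$; hence $d(c_{\alpha_j}^G(x),c_{\alpha_j}^G(y)) = N(x,y) \le d(x,y)$, so $c_{\alpha_j}^G$ is $1$-Lipschitz. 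Second, $c_{\alpha_j}(T^G)$ is a nonempty $G$-invariant subtree of $T_{\alpha_j}$, so it contains $T_{\alpha_j}^G$; conversely $T^G$ is the union of axes of nontrivial $g\in G$ in $T$, and each such axis is carried onto the axis of $g$ in $T_{\alpha_j}$ (its image is a $g$-invariant geodesic which is neither a point nor a bounded segment, since $g$ is not elliptic on $T_{\alpha_j}$), so $c_{\alpha_j}(T^G)\subseteq T_{\alpha_j}^G$. Thus $c_{\alpha_j}^G$ is onto $T_{\alpha_j}^G$.

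The heart of the proof is the claim that any geodesic segment of $T^G$ all of whose edges are non-$\alpha_j$-dual has length at most $E$. Let $\Sigma' \subseteq \sigma$ be the subgraph spanned by the non-$\alpha_j$-dual edges (well defined since being $\alpha_j$-dual is a $G$-orbit invariant). I claim $\Sigma'$ is a forest. If not, $\Sigma'$ contains an embedded cycle $C$, which represents a nontrivial $g = [C] \in \pi_1(\sigma) = G$; the axis of $g$ in $T^G$ is a lift of $C$, hence consists entirely of non-$\alpha_j$-dual edges, so $c_{\alpha_j}$ collapses the whole axis to a point. Then $g$ fixes a point of $T_{\alpha_j}$, contradicting freeness of the $G$-action there. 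Hence $\Sigma'$ is a forest. Now a geodesic segment of $T^G$ with all edges non-$\alpha_j$-dual projects, under the covering map $T^G\to\sigma$, to a locally injective edge-path contained in $\Sigma'$ — locally injective because the covering map is injective on the star of each vertex and the segment does not backtrack — and a locally injective edge-path in a forest is embedded, so it uses at most $E$ edges. This gives the bound.

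Assembling the estimate: on a geodesic $[x,y]\subset T^G$ the $N(x,y)$ $\alpha_j$-dual edges cut the remaining non-$\alpha_j$-dual edges into at most $N(x,y)+1$ runs, each of length $\le E$, so $d(x,y) \le N(x,y) + (N(x,y)+1)E$, i.e. $N(x,y) \ge \frac{1}{E+1}d(x,y) - 1$. Together with $N(x,y) = d(c_{\alpha_j}^G(x),c_{\alpha_j}^G(y)) \le d(x,y)$ and the surjectivity established above, $c_{\alpha_j}^G$ is a quasi-isometry onto $T_{\alpha_j}^G$ with multiplicative constant $E+1$. (Alternatively one can note that both $T^G$ and $T_{\alpha_j}^G$ carry free cocompact $G$-actions — the latter by the same Bass–Serre argument as in Lemma \ref{lem:free} — and that any $G$-equivariant map between spaces with geometric $G$-actions is a quasi-isometry; but the direct estimate is more transparent.) I expect the only real obstacle to be the claim that $\Sigma'$ is a forest, equivalently that no group element has an axis in $T^G$ built entirely of non-$\alpha_j$-dual edges; this is precisely where the purely pseudo-Anosov hypothesis enters, through freeness of the $G$-action on the single-curve tree $T_{\alpha_j}$, and everything else is routine tree and covering-space combinatorics.
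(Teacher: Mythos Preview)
Your proof is correct but takes a genuinely different route from the paper. The paper's argument is the one you relegate to a parenthetical at the end: it simply invokes the Milnor--Schwarz Lemma to say that both $T^G$ and $T_{\alpha_j}^G$ are quasi-isometric to $G$ via orbit maps (using that $G$ acts freely and cocompactly on each), and then observes that the $G$-equivariant collapse $c_{\alpha_j}^G$ fits into a commuting triangle with the two orbit maps, hence is itself a quasi-isometry. No explicit constants are produced.

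Your direct combinatorial argument is longer but buys something concrete. By showing that the non-$\alpha_j$-dual subgraph $\Sigma'\subset\sigma$ is a forest (via the nice observation that a cycle there would yield a $g\in G$ whose $T^G$-axis collapses to a point in $T_{\alpha_j}$, contradicting freeness), you obtain the explicit multiplicative constant $E+1$ and additive constant $1$. In particular you have essentially proved Corollary~\ref{cor:Lbound} in the same breath, with the explicit bound $L_0=E+1$, whereas the paper deduces that corollary from the lemma using the unspecified quasi-isometry constants $(\kappa_j,\lambda_j)$. So the paper's proof is slicker and more conceptual; yours is more informative and makes the subsequent corollary immediate with explicit constants.
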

\begin{proof}
    The action of $G$ on $T^G$ is free and cellular, so it is properly discontinuous. As noted by Lemma \ref{lem:free}, the action is also cocompact. The Milnor-Schwarz Lemma \cite[Proposition 8.19]{MR1744486} states that the orbit map $G \to T^G$ is a quasi-isometry. By the same argument, $G \to T_{\alpha_j}^G$ is also a quasi-isometry. Since the collapsing map is $G$-equivariant, after choosing appropriate orbit maps, the following diagram commutes,
    \[\begin{tikzcd}[row sep=large]
    & G \ar[swap]{dl}{\text{QI}} \ar{dr}{\text{QI}} \\
    T^G \ar{rr}{c_{\alpha_j}^G} && T_{\alpha_j}^G.
    \end{tikzcd}\]
    Since the other two maps in the diagram are quasi-isometries, $c_{\alpha_j}^G$ must also be a quasi-isometry.
\end{proof}

\begin{corollary} \label{cor:Lbound}
    There is a constant $L_0 > 0$ such that if $\gamma \subset T^G$ is a geodesic edge path with length $\geq L_0$, then for each component $\alpha_j$ of $\alpha$, $\gamma$ must contain at least one edge dual to $\alpha_j$.
\end{corollary}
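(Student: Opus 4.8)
The plan is to derive this directly from the preceding lemma that each collapsing map $c_{\alpha_j}^G\colon T^G\to T_{\alpha_j}^G$ is a quasi-isometry. Fix a component $\alpha_j$ and suppose $\gamma\subset T^G$ is a geodesic edge path none of whose edges is dual to $\alpha_j$. By definition, $c_{\alpha_j}\colon T\to T_{\alpha_j}$ collapses every non-$\alpha_j$-dual edge, so each edge of $\gamma$ is collapsed; consequently all vertices of $\gamma$ are identified, and $c_{\alpha_j}^G$ sends the two endpoints $t,t'$ of $\gamma$ to the same point of $T_{\alpha_j}^G$. Since $c_{\alpha_j}^G$ is a $(\lambda_j,\epsilon_j)$-quasi-isometry for some constants $\lambda_j\geq 1,\ \epsilon_j\geq 0$, we have
\[
0 = d_{T_{\alpha_j}^G}\bigl(c_{\alpha_j}^G(t),c_{\alpha_j}^G(t')\bigr)\ \geq\ \tfrac{1}{\lambda_j}\,d_{T^G}(t,t') - \epsilon_j,
\]
and because $\gamma$ is a geodesic edge path, $d_{T^G}(t,t') = \len(\gamma)$. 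Hence $\len(\gamma)\leq \lambda_j\epsilon_j$.

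Taking the contrapositive, any geodesic edge path in $T^G$ of length strictly greater than $\lambda_j\epsilon_j$ must contain an edge dual to $\alpha_j$. Since $\alpha$ has only finitely many components $\alpha_1,\ldots,\alpha_m$, we may set
\[
L_0 \;=\; 1 + \max_{1\leq j\leq m}\ \lambda_j\epsilon_j,
\]
and then any geodesic edge path $\gamma\subset T^G$ with $\len(\gamma)\geq L_0$ satisfies $\len(\gamma)>\lambda_j\epsilon_j$ for every $j$, so it contains at least one edge dual to each $\alpha_j$, as claimed.

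There is no real obstacle here: the only points requiring a word of justification are (i) that a path avoiding $\alpha_j$-dual edges is collapsed to a point by $c_{\alpha_j}^G$ — immediate from the definition of the collapsing map together with the fact that it descends to the $G$-invariant subtrees — and (ii) that the quasi-geodesic-embedding inequality from the Milnor–Schwarz argument in the previous lemma applies to the endpoints of $\gamma$ with $d_{T^G}$ equal to the edge-path length, which holds since $T^G$ is a tree and $\gamma$ is geodesic. Finiteness of the component set of $\alpha$ then lets us pass from a per-$\alpha_j$ bound to a single uniform constant $L_0$.
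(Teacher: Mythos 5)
Your proof is correct and is essentially identical to the paper's own argument: both collapse the $\alpha_j$-avoiding geodesic to a point under $c_{\alpha_j}^G$, apply the lower quasi-isometry inequality to get $\len(\gamma)\leq \lambda_j\epsilon_j$, and take a maximum over the finitely many components of $\alpha$. No further comment is needed.
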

\begin{proof}
    Fix $\alpha_j$ of $\alpha$ and consider $c_{\alpha_j}: T^G \to T_{\alpha_j}^G$. By the previous lemma, $c_{\alpha_j}$ must be a quasi-isometry for some quasi-isometric constants $(\kappa_j,\lambda_j)$. Suppose that some geodesic edge path $\gamma$ contains no edge dual to $\alpha_j$. Then every edge of $\gamma$ collapses under the collapsing map, so $c_{\alpha_j}(\gamma)$ is just a single vertex. Since both $\gamma$ and $c_{\alpha_j}(\gamma)$ realize the distance between their endpoints, the quasi-isometric inequality gives
    \begin{equation*}
    \begin{aligned}
        \frac{1}{\kappa_j}\len(\gamma) - \lambda_j & \leq \len(c_{\alpha_j}(\gamma)) = 0, \\
        \len(\gamma) & \leq \kappa_j\lambda_j.
    \end{aligned}
    \end{equation*}
    Thus, choosing $L_0 = \max_{j} \kappa_j\lambda_j + 1$ suffices.
\end{proof}

The following lemma helps us to pick out useful subsegments of very long geodesics in $T^G$.
\begin{lemma} \label{lem:Gorbitsubsegments}
    For any $N,L > 0$, there is a constant $\mathfrak{L} = \mathfrak{L}(N,L) > 0$ such that if $\gamma \subset T^G$ is a geodesic edge path with length $\geq \mathfrak{L}$, then there must be at least $N+1$ subsegments (pairwise disjoint on their interiors) of length $L$ all contained in the same $G$-orbit.
\end{lemma}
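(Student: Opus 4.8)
The plan is a pigeonhole argument powered by the finiteness of the quotient $T^G/G$. First I would reduce to the case that $L$ is a positive integer. Replacing $L$ with $\lceil L\rceil$ only strengthens the statement: if $\gamma$ contains $N+1$ interior-disjoint subsegments of length $\lceil L\rceil$ lying in a common $G$-orbit, say $\gamma_i = g_i\cdot\gamma_0$ with the $g_i\in G$ respecting the orientations induced by $\gamma$, then the initial length-$L$ sub-arcs $\gamma_i'\subset\gamma_i$ again satisfy $\gamma_i'=g_i\cdot\gamma_0'$, are still pairwise interior-disjoint, and have length exactly $L$. So from now on $L\in\ZZ_{>0}$, and by a ``length-$L$ subsegment'' I mean a sub-edge-path consisting of $L$ consecutive edges.

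Next I would bound the number of $G$-orbits of such subsegments. By Lemma~\ref{lem:free}, $T^G$ has finite maximal valence, say $D$, and $T^G/G$ has $E$ edges, so $T^G$ has at most $2E$ $G$-orbits of oriented edges. Any oriented length-$L$ edge path is specified by its first oriented edge together with $L-1$ further edge-choices, each from at most $D$ options at the current vertex; hence the number $M$ of $G$-orbits of oriented length-$L$ edge paths in $T^G$ satisfies $M\leq 2E\,D^{L-1}$, a constant depending only on $L$ and $T^G$. (Geodesicity of $\gamma$ is not even needed here; it would only shrink this count.)

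Finally, given a geodesic edge path $\gamma\subset T^G$ with $\len(\gamma)\geq\mathfrak{L}$, I would set $k=\lfloor\len(\gamma)/L\rfloor$ and cut the first $kL$ edges of $\gamma$ into $k$ consecutive length-$L$ subsegments, each oriented by $\gamma$; consecutive blocks of a subdivision meet only in endpoints, so these have pairwise disjoint interiors. If $k\geq MN+1$, then by pigeonhole some $G$-orbit of oriented length-$L$ edge paths contains at least $\lceil k/M\rceil\geq N+1$ of them, which is exactly the conclusion. Since $k\geq MN+1$ is guaranteed as soon as $\len(\gamma)\geq(MN+1)L$, setting $\mathfrak{L}(N,L):=(MN+1)L$ (with $L$ replaced by $\lceil L\rceil$ in the general case) finishes the proof. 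I do not expect a genuine obstacle: the argument is elementary counting, and the only points meriting care — the reduction to integral $L$ and the finiteness of the orbit count $M$ — follow immediately from Lemma~\ref{lem:free}.
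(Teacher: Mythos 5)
Your proposal is correct and follows essentially the same route as the paper: subdivide $\gamma$ into consecutive length-$L$ blocks, bound the number of $G$-orbits of oriented length-$L$ edge paths using the finiteness of $T^G/G$ from Lemma \ref{lem:free}, and apply the pigeonhole principle. The only differences are cosmetic — you handle non-integer $L$ explicitly and bound the orbit count by $2E\,D^{L-1}$ via the valence bound rather than the paper's $(2E)^L$ — and neither affects the argument.
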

\begin{proof}
    Recall that $E$ is the number of $G$-orbits of edges in $T^G$ or, equivalently, the number of distinct edges in $\sigma = T^G/G$. We will need to take care to distinguish orientation, so we count the edges in $\sigma$ taken with both orientations to yield $2E$ $G$-orbits of oriented edges in $T^G$. We claim that $\mathfrak{L}(N,L) = L (N(2E)^{L} + 1)$ suffices.
    
    Let $\gamma \subset T^G$ be a geodesic edge path with length $\geq L (N(2E)^{L} + 1)$. Orient $\gamma$ to fix a starting endpoint. Let $\gamma_1$ be the subsegment consisting of the first $L$ edges of $\gamma$, and let $\gamma_i$ be the subsegment consisting of the next $L$ edges after $\gamma_{i-1}$. Since $\gamma$ is long enough, we can continue choosing subsegments in this way at least until $\gamma_{N(2E)^{L} + 1}$. We now have a list of $N(2E)^{L} + 1$ subsegments of length $L$ which are pairwise disjoint on their interior.

    There are $2E$ $G$-orbits of oriented edges $e$ in $T^G$. Using this fact, we can deduce that there are at most $(2E)^2$ $G$-orbits of oriented edge paths of length $2$. Inductively, there are at most $(2E)^L$ $G$-orbits of oriented edge paths of length $L$. The pigeonhole principle now implies that among our list of $N(2E)^{L} + 1$ subsegments, some $G$-orbit appears at least $N+1$ times.
\end{proof}

Applying Lemma \ref{lem:Gorbitsubsegments} with $N=1$ and $L=1$ yields the simple observation that a geodesic edge path in $T^G$ of length $\mathfrak{L}(1,1) = 2E+1$ must contain at least one pair of (oriented) edges in the same $G$-orbit. We will be most interested in applying the lemma with some large $N$ and with the constant $L_0$ obtained from Corollary \ref{cor:Lbound}.

\section{Reduction to a Diameter Bound on $p_0(T^u \cap T^G)$}
As mentioned in Section \ref{sec:proofsummary}, the problem of showing $G$ is convex cocompact reduces to proving that $p_0(T^u \cap T^G)$ is uniformly bounded, independent of $u$. We complete this reduction in two steps. The first step shows that a uniform bound on $T^u \cap T^G$ is sufficient to prove $G$ is convex cocompact. The second step shows that a uniform bound on $p_0(T^u \cap T^G)$ is sufficient to prove a uniform bound on $T^u \cap T^G$.

\subsection{First reduction} \label{sec:firstreduction}
The first step is to prove Theorem \ref{thm:main2} while assuming the following the proposition.

\begin{proposition}\label{prop:boundupstairs}
    There exists a constant $D > 0$ such that for any simplex $u \subset \cc{S^z}$ we have $$ \diam(T^u \cap T^G) \leq D, $$ where the diameter is computed in $T^G$.
\end{proposition}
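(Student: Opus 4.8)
The plan is to reduce Proposition \ref{prop:boundupstairs} to a diameter bound on the image $p_0(T^u \cap T^G) \subset \Sigma_0$ in the $G_0$-quotient, and then to establish that bound by combining the decomposition of $T^u \cap T^G$ into a hull subtree and parallel subtrees with the Masur--Minsky subsurface projection machinery \cite{MR1791145}. We proceed in three stages. \emph{Stage 1 (cleanup).} By Lemma \ref{lem:multilemma}(3), if $u$ contains a non-surviving curve then $T^u$ is a single vertex; and by Lemma \ref{lem:multilemma}(2), if the $K_u$-action on $T$ has a global fixed point then $\diam(T^u \cap T^G) \le 1$. So we may assume $u \subset \ccs{S^z}$ and $T^u$ has infinite diameter, so that $T^u \cap T^G$ decomposes into a hull subtree $T^\hh$ and parallel subtrees $T^{||}$, and Lemma \ref{lem:parallelstructure} is available.

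\emph{Stage 2 (second reduction).} I claim a uniform bound $\diam(p_0(T^u \cap T^G)) \le D'$, independent of $u$, implies the proposition. Given a long geodesic $\gamma \subset T^u \cap T^G$, its image $p_0(\gamma) \subset \sigma_0$ lies in a subgraph of diameter $\le D'$; using the cocompact $G/G_0 \cong \ZZ^n$ action on $\sigma_0$ together with Lemma \ref{lem:treeintersectionGeqv} (which trades $u$ for $g(u)$, $g \in G$), we translate so that $p_0(\gamma)$ lands inside a fixed compact core $\Sigma_1 \subset \Sigma_0$; call such $u$ ``deep''. Compactness of $\Sigma_1$ forces the relevant stabilizer subgroup to be finitely generated, so for deep $u$ one bounds the hull subtree via the $\pi_1 S$ argument of \cite{MR2599078} and the parallel subtrees via the argument behind Lemma \ref{lem:parallelstructure} from \cite{MR4632569}; neither step is sensitive to the rank of $H$. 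Propagating the deep-simplex bound over the $\ZZ^n$-orbit then gives the bound for all $u$.

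\emph{Stage 3 (the hard part).} To bound $\diam(p_0(T^u \cap T^G))$, attach to each edge $e$ of $T^G$ the decoration $\dec{e} \subset \ac{A_{j(e)}}$ (and to each pseudo-Anosov vertex an analogous decoration in $\acc{Y_k}$) and compare it with the subsurface projection $\ssp{j}{v}$ of $v = \Phi(u)$. The Leininger--Russell dead-end lemmas \cite[Lemmas 5.6, 5.9]{MR4632569}, whose proofs do not see the rank of $H$, force every interior edge $e$ of $T^u \cap T^G$ to satisfy $d_{\ac{A_{j(e)}}}(\dec{e}, \ssp{j(e)}{v}) \le C$ or to have $\ssp{j(e)}{v} = \emptyset$. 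Collect the first kind into $\widetilde{\mathcal{E}} \subset T^G$, with image $\mathcal{E} \subset \sigma_0$. When $n \ge 2$ the set $\mathcal{E}$ is unbounded, so partition it as $\mathcal{E} = \bigcup_j \mathcal{E}_j$ by dual curve $\alpha_j$. By Lemma \ref{lem:diagonalaction}, $H_j = \ker \widehat{\rho}_j$ acts coarsely as the identity on $\ac{A_j}$ while $H/H_j$ acts loxodromically there; this pins $p_j(\mathcal{E}_j)$ inside a bounded subset of the quotient $\sigma_j = \sigma_0/H_j$. Since $\sigma_0$ is quasi-isometric to the product $\prod_j \sigma_j$, a subset of $\sigma_0$ with bounded image in every $\sigma_j$ is bounded; applied to $\bigcap_j \mathcal{E}_j$, and combined with Corollary \ref{cor:Lbound} (geodesics of length $\ge L_0$ in $T^G$ cross an $\alpha_j$-dual edge for every $j$) and Lemma \ref{lem:Gorbitsubsegments}, this forces a long geodesic inside a parallel subtree into a bounded neighborhood of $\bigcap_j \mathcal{E}_j$, hence with bounded $p_0$-image. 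The hull subtree is treated identically after an additional argument bounding the ``gaps'' of edges with empty subsurface projection. Assembling the pieces yields the uniform bound on $\diam(p_0(T^u \cap T^G))$, verifying the hypothesis of Stage 2 and hence the proposition.

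\emph{Main obstacle.} The crux is Stage 3, and within it the passage from ``$p_j(\mathcal{E}_j)$ bounded in each quotient $\sigma_j$'' back to ``$\bigcap_j \mathcal{E}_j$ bounded in $\sigma_0$'', where the product quasi-isometry $\sigma_0 \simeq \prod_j \sigma_j$ and the forced periodic return of $T^G$-geodesics to $\alpha_j$-dual edges must be orchestrated together. A secondary difficulty, invisible in the Dehn-twist model case, is that an arbitrary reducible generator of $H$ need not obviously act loxodromically on any single annular cover; this is precisely why the construction also records and uses the vertex decorations coming from the pseudo-Anosov components $Y_k$ and treats them on the same footing as the edge decorations.
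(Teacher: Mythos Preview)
Your proposal is correct and follows essentially the same three-stage architecture as the paper: reduce to deep simplices inside a compact core $\Sigma_1$, bound the hull and parallel subtrees there via \cite{MR2599078} and \cite{MR4632569}, and establish the prerequisite bound on $p_0(T^u\cap T^G)$ by decorating edges and vertices, invoking the dead-end lemmas, and passing to quotient graphs where loxodromic actions pin down the relevant sets. The only point you leave implicit is that the product of \emph{all} quotients $\sigma_j$ is too large to be quasi-isometric to $\sigma_0 \simeq \ZZ^n$; the paper selects exactly $n$ factors $\sigma_j, \sigma_{k'}$ indexed by a set $J \cup K$ of $n$ linearly independent rows of the matrix $\calM$ recording the decomposition of the generators of $H$, and it is this selection that makes the product quasi-isometry go through.
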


In the $n=1$ case, Leininger and Russell perform this reduction via an argument relating geodesic edge paths in $\cc{S^z}$ with chains of tree intersections in $T^G$. Assuming Proposition \ref{prop:boundupstairs}, these tree intersections are uniformly bounded. Thus, the total diameter of a chain of tree intersections is bounded as a linear function of the length of the geodesic edge path. Since $T^G$ is a geometric model for $G$, this yields the required quasi-isometric constants to show $G$ is convex cocompact.

The same argument works in our case as well, since the rank of $H$ is irrelevant for the proof. The properties of $H$ that are required are that $H$ is reducible -- so that we can define $T_\alpha$ -- and that it is realizable by homeomorphisms -- so that we can define a $\Gamma_H$-action on $\HH^2$. We refer the reader to \cite[Section 4.1]{MR4632569} for the full argument. For a similar arguments using hull intersections which motivated the proof in \cite{MR4632569}, see \cite[Theorem 6.3]{MR2599078} and \cite[Section 7]{MR3314946}.

\subsection{Second reduction} \label{sec:secondreduction}
The second step is to prove Proposition \ref{prop:boundupstairs} while assuming the following proposition.

\begin{proposition} \label{prop:bounddownstairs}
    There exists a constant $D_0 > 0$ such that for any simplex $u \subset \cc{S^z}$ we have $$ \diam(p_0(T^u \cap T^G)) \leq D_0, $$ where the diameter is computed in $\sigma_0$.
\end{proposition}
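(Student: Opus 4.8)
The plan is to control $p_0(T^u\cap T^G)\subset\sigma_0$ by attaching to the edges and pseudo-Anosov vertices of $T^G$ \emph{decorations} coming from subsurface projection, and then showing that the resulting decoration-constrained regions become bounded after collapsing the ``wrong'' coordinate directions of $\sigma_0$. By Lemma \ref{lem:multilemma}(3) we may assume that $u$ consists of surviving curves, and, since a global fixed point for $K_u$ forces $T^u$ (hence $T^u\cap T^G$) to be a single vertex or edge, we may further assume $T^u$ has infinite diameter. Fix $v=\Phi(u)$, realized as a geodesic multicurve on $S$. Each edge $e\subset T^G$ is dual to a component $\widetilde\alpha_e\subset p^{-1}(\alpha_{j(e)})$, and the two components of $\partial\hh_G$ crossing $\widetilde\alpha_e$ descend in $A_e=A_{j(e)}$ to a bounded subset $\dec{e}\subset\ac{A_{j(e)}}$; each pseudo-Anosov vertex $t\subset T^G$ is dual to a copy of $Y_{k(t)}$, and the arcs of $\partial\hh_G$ meeting $\widetilde Y_t$ determine a bounded subset $\dec{t}\subset\acc{Y_{k(t)}}$. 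A first step is to verify that $e\mapsto\dec{e}$ and $t\mapsto\dec{t}$ are $G$-equivariant.

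The second ingredient is the Leininger--Russell ``dead end'' dichotomy, whose proof is insensitive to $\operatorname{rank}H$ and transfers verbatim: there is a constant $B>0$ so that any edge $e$ in the interior of a parallel or hull subtree of $T^u\cap T^G$ has either $\pi_{j(e)}(v)=\emptyset$ or $d(\dec{e},\pi_{j(e)}(v))\le B$ in $\ac{A_{j(e)}}$, and, likewise, any interior pseudo-Anosov vertex $t$ has either empty projection of $v$ to $\acc{Y_{k(t)}}$ or $d(\dec{t},\pi_{k(t)}(v))\le B$ there; moreover, by the parallel structure of Lemma \ref{lem:parallelstructure}, parallel-type edges always have nonempty projection. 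Collect in $\widetilde{\mathcal E}\subset T^G$ all edges and pseudo-Anosov vertices that are within $B$ of the relevant projection of $v$, and put $\mathcal E=p_0(\widetilde{\mathcal E})\subset\sigma_0$. Then, away from their leaves, parallel subtrees have all edges and all pseudo-Anosov vertices in $\widetilde{\mathcal E}$, while the hull subtree $T^{\hh}$ may in addition contain runs (``gaps'') of edges with empty projection.

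The heart of the matter is that $\mathcal E$, although generally unbounded in $\sigma_0$, becomes bounded after the appropriate coordinate collapse. Partition $\widetilde{\mathcal E}$ into the pieces $\widetilde{\mathcal E}_j$ ($\alpha_j$-dual edges, for each twist component $\alpha_j$) and $\widetilde{\mathcal E}_{k'}$ (pseudo-Anosov vertices dual to $Y_k$), with $p_0$-images $\mathcal E_j,\mathcal E_{k'}\subset\sigma_0$. Using the subgroups $\overline G_j=\phi^{-1}(H_j)$ and $\overline G_{k'}=\phi^{-1}(H_{k'})$ of $G/G_0$ from Section \ref{sec:Haction} --- for which $H_j$ (resp.\ $H_{k'}$) acts coarsely as the identity on $\ac{A_j}$ (resp.\ $\acc{Y_k}$), while $(G/G_0)/\overline G_j\cong\ZZ$ acts loxodromically on $\ac{A_j}$ --- form the quotient graphs $\sigma_j=\sigma_0/\overline G_j$ and $\sigma_{k'}=\sigma_0/\overline G_{k'}$ with quotient maps $p_j,p_{k'}$. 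The claim to prove is that $p_j(\mathcal E_j)$ and $p_{k'}(\mathcal E_{k'})$ are bounded independently of $u$: along a fixed $G$-orbit of $\alpha_j$-dual edges, the $\ZZ$-action of $(G/G_0)/\overline G_j$ translates $\dec{e}$ loxodromically in $\ac{A_j}$, so the constraint $d(\dec{e},\pi_{j(e)}(v))\le B$ confines the $\overline G_j$-coset of $e$ to a window whose size is controlled by $B$ and the loxodromic constants alone; summing over the finitely many $\alpha_j$-dual $G$-orbits and over $j$, and treating the pseudo-Anosov vertices identically via loxodromic elements of $(G/G_0)/\overline G_{k'}$ on $\acc{Y_k}$, yields the bounds. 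Alongside this, one records --- via the Milnor--Schwarz lemma applied to the relevant $\ZZ$-actions together with injectivity of the homomorphism $V$ from Section \ref{sec:Haction} --- that the combined map $(p_j,p_{k'}):\sigma_0\to\prod_j\sigma_j\times\prod_k\sigma_{k'}$ is a quasi-isometric embedding (a genuine quasi-isometry in the two-twist model, where $\sigma_0$ is quasi-isometric to $\sigma_1\times\sigma_2$).

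Finally, assemble the bound from the two kinds of pieces of $T^u\cap T^G$. For a parallel subtree $T^{||}$: away from its leaves all edges lie in $\widetilde{\mathcal E}$, and by Corollary \ref{cor:Lbound} --- together with its analogue for pseudo-Anosov vertices, proved by the same collapse-map argument --- any geodesic edge path of length $\ge L_0$ in $T^G$ meets an $\alpha_j$-dual edge for every twist $j$ and a $Y_k$-dual vertex for every pseudo-Anosov $k$. Hence $p_0(T^{||})$ lies in a bounded neighborhood of $\mathcal E_j$ and of $\mathcal E_{k'}$ simultaneously, for every $j$ and $k$; projecting to $\prod_j\sigma_j\times\prod_k\sigma_{k'}$ and invoking the boundedness of each $p_j(\mathcal E_j),p_{k'}(\mathcal E_{k'})$ shows the image of $p_0(T^{||})$ there is bounded, whereupon the quasi-isometric embedding bounds $\diam(p_0(T^{||}))$ uniformly in $u$. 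The hull subtree $T^{\hh}$ is treated the same way, after the additional step of bounding the lengths of the ``gap'' runs of empty-projection edges inside $T^{\hh}$; that gap bound is rank-independent and is inherited from the $n=1$ case. As $T^u\cap T^G$ is $T^{\hh}$ together with the parallel subtrees filling in its complement, combining the two bounds produces the constant $D_0$. I expect the genuine obstacle to be the boundedness of $p_j(\mathcal E_j)$: this is exactly the step at which the $n\ge 2$ argument must depart from \cite{MR4632569}, where the analogue of $\mathcal E$ was already bounded; a secondary difficulty, for general reducible generators, is that an arbitrary element of $H$ need not act loxodromically on any single $A_j$ or $Y_k$, which forces one to work through the coarsely diagonal decomposition of Lemma \ref{lem:diagonalaction} rather than with a generating set of $H$ directly. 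The details are carried out in Section \ref{sec:main}.
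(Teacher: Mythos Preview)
Your overall architecture matches the paper's: decorations on edges and vertices, the Leininger--Russell dead-end bounds (Lemmas \ref{lem:Bparallel} and \ref{lem:Bhull}), the partition of $\mathcal E$ by dual type, the boundedness of $p_j(\mathcal E_j)$ and $p_{k'}(\mathcal V_k)$ via loxodromic actions (Lemma \ref{lem:pjkbounded}), and the quasi-isometry of $\sigma_0$ with a product of quotient graphs (Lemma \ref{lem:qitosigma0}). The parallel-subtree bound you sketch is essentially the paper's Lemma \ref{lem:D0parallel}.

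There is, however, a genuine gap in your treatment of the hull subtree. You write that the bound on the ``gap'' runs of empty-projection edges inside $T^{\hh}$ ``is rank-independent and is inherited from the $n=1$ case.'' This is not so. The paper requires a new and substantial argument here, Lemma \ref{lem:Lbarbound}: if $T^{\hh}$ contains a geodesic of length at least $\overline L$, then $\pi_j(v)\neq\emptyset$ for \emph{every} $\alpha_j$. The proof is not a transplant from \cite{MR4632569}; it uses Lemma \ref{lem:Gorbitsubsegments} to find many $G$-translates of a fixed long segment inside $T^{\hh}$, applies the dead-end bound at every $\alpha_j$-dual edge and $Y_k$-dual vertex along those segments to confine each coordinate of $V(\Phi_*(g_i))$ to a window of width $W$, pigeonholes on the $W^{m+l}$ possible tuples to produce $g\in G$ with $V'(\Phi_*(g))=0$ on the support $\calS(v)$, and then builds a $g$-invariant bi-infinite path in $\HH^2$ disjoint from $p^{-1}(v)$, contradicting pure pseudo-Anosovness via Theorem \ref{thm:kra}. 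The $n=1$ argument in \cite{MR4632569} instead exploits the two-ended shape of $\sigma_0$ to bound gaps directly, and that shape is exactly what fails for $n\ge 2$. Without Lemma \ref{lem:Lbarbound} (or a replacement), your hull-subtree step does not go through: when some $\pi_j(v)=\emptyset$ with $j\in J$, the set $\mathcal E_j$ is empty and gives no control on the $\sigma_j$-coordinate.

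A smaller point: to get a genuine quasi-isometry (not merely a QI embedding) from $\sigma_0$ into a product of quotient graphs via Milnor--Schwarz, you need the diagonal $G/G_0$-action on the product to be cocompact, which forces exactly $n$ factors. The paper handles this by selecting index sets $J\subset\{1,\dots,m\}$ and $K\subset\{1,\dots,l\}$ corresponding to $n$ linearly independent rows of the matrix $\calM$ encoding $V$; see the construction of $\sigma_{JK}$ and Lemma \ref{lem:qitosigma0}. Your proposal to use all twist $\alpha_j$ and all pseudo-Anosov $Y_k$ would still yield a QI embedding (since it projects onto $\sigma_{JK}$), which suffices for the diameter bound, but the Milnor--Schwarz justification you cite does not apply directly.
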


As described in Section \ref{sec:proofsummary}, we accomplish this task in three substeps. First, we construct a compact subsurface $\Sigma_1 \subset \Sigma_0$ using the bound from Proposition \ref{prop:bounddownstairs}. Next, we reduce to considering only `deep' simplices, which have the property that $p_0(T^u \cap T^G) \subset \Sigma_1$. Finally, we prove Proposition \ref{prop:boundupstairs} for deep simplices.

\subsubsection{Constructing $\sigma_1$ and $\Sigma_1$} \label{sec:constructsigma1}
In order to construct $\Sigma_1 \subset \Sigma_0$, we first construct its spine $\sigma_1 \subset \sigma_0$ then join up the polygons through which the spine travels.

Let $\set{e_1, \ldots, e_E}$ be a set of $\calH$-orbit representatives of edges in $\sigma_0$. If $\delta$ is a boundary component of $\partial\Sigma_0$, then let $\delta'$ denote the minimal length loop in $\sigma_0$ that is freely homotopic in $\Sigma_0$ to $\delta$. Let $D_0$ be the constant obtained from Proposition \ref{prop:bounddownstairs}. Now, let $\sigma_1 \subset \sigma_0$ be a compact, connected subgraph satisfying the following properties.
\begin{itemize}
    \item $\set{e_1, \ldots, e_E} \subset \sigma_1$
    \item for each $e_i \in \set{e_1, \ldots, e_E}$, the distance between $e_i$ and any point in $\sigma_0 \setminus \sigma_1$ is at least $D_0 + 2$
    \item for each component $\delta$ of $\partial\Sigma_0$, $\exists \frakh \in \calH$ such that $\frakh(\delta') \subset \sigma_1$
    \item $\sigma_1$ contains no valence $1$ vertices
    \item $\sigma_1$ contains all edges between with endpoints in its vertex set
\end{itemize}
Such a subgraph can be constructed as follows. First, let $\sigma_1$ be the closed $(D_0 + 2)$-neighborhood of $\set{e_1, \ldots, e_E}$. Since $\Sigma \cong \Sigma_0 / \calH$ is a compact surface, there are only finitely many $\calH$-orbits of boundary components of $\Sigma_0$. For each $\calH$-orbit, choose a representative $\delta$, and add $\delta'$ to $\sigma_1$. If the result is not connected, add paths between disjoint components to produce a connected subgraph. The current construction is compact, connected, and satisfies the first three properties. Now consider the complement $\sigma_0 \setminus \sigma_1$. Since $\sigma_0$ is one-ended and $\sigma_1$ is compact, $\sigma_0 \setminus \sigma_1$ consists of finitely many bounded components and a single unbounded component that is a neighborhood of the end. Add the finitely many bounded components to $\sigma_1$ so that now $\sigma_0 \setminus \sigma_1$ is just the unbounded component. Add in all edges between vertices already in $\sigma_1$. If $t \in \sigma_1$ has valence $1$ in $\sigma_1$, then it must be adjacent to some $t' \in \sigma_0 \setminus \sigma_1$ since $\sigma_0$ has no valence $1$ vertices. Now for any $s' \in \sigma_0 \setminus \sigma_1$ adjacent to an $s \in \sigma_1$ with $s \neq t$, there is a path from $t'$ to $s'$ in $\sigma_0 \setminus \sigma_1$. Adding the path from $t$ to $s$ which includes this path from $t'$ to $s'$ to $\sigma_1$ makes $t$ no longer valence $1$ and adds no new valence $1$ vertex. Repeat the process until $\sigma_1$ has no valence $1$ vertices. Finally, again add in all edges between vertices already in $\sigma_1$.

We now move to describing $\Sigma_1$ and related objects. Let $\Sigma_1$ be the compact subsurface of $\Sigma_0$ defined by

\[
\Sigma_1 = \bigcup_{\tau \in \sigma_1^{(0)}} Z_\tau.
\]

Just as $\sigma_0$ is the spine of $\Sigma_0$, $\sigma_1$ is the spine of $\Sigma_1$. Let $G_1 < G_0$ be the image of $\pi_1\Sigma_1$ in $G_0 = \pi_1\Sigma_0$. Equivalently, $G_1 < G_0$ is the image of $\pi_1\sigma_1$ in $G_0 = \pi_1\sigma_0$. Note that $G_1$ is finitely generated since $\Sigma_1$ is compact, and $G_1 < G$ is purely pseudo-Anosov. Let $\hh_{G_1} \subset \HH^2$ be the convex hull of the limit set of $G_1$.

A similar union of polygons is also useful upstairs. Let $\widetilde{\sigma}_1 \subset T^G$ be the component of $p_0^{-1}(\sigma_1)$ that is $G_1$-invariant. Let $\hh^G_1$ be the closed subspace of $\hh_G$ defined by

\[
\mathfrak{H}^G_1 = \bigcup_{t \in \widetilde{\sigma}_1^{(0)}} \widetilde{Z}_t.
\]

Note that $\hh^G_1$ is the minimal closed, convex, $G_1$-invariant subspace of $\hh_G$ that projects to $\Sigma_1$,

\[\begin{tikzcd}[column sep=tiny,row sep=tiny]
& {T^G} \ar{rrr} \ar{ddd} &&& {\hh_G} \ar{ddd} \\
{\widetilde{\sigma}_1} \ar[phantom, sloped]{ur}[marking]{\subset} \ar[dashed]{rrr} &&& {\mathfrak{H}^G_1} \ar[phantom, sloped]{ur}[marking]{\subset} && {\hh_{G_1}} \\
\\
& {\sigma_0} \ar{rrr} &&& {\Sigma_0} \\
{\sigma_1} \ar[phantom, sloped]{ur}[marking]{\subset} \ar[dashed]{uuu} \ar[dashed]{rrr} &&& {\Sigma_1} \ar[phantom, sloped]{ur}[marking]{\subset} \ar[dashed, bend right=45]{uuurr} && .
\end{tikzcd}\]

Let $R := \max_{\tau \in \sigma_1^{(0)}} \diam(Z_\tau)$. The two subspaces $\hh^G_1$ and $\hh_{G_1}$ are related by the following lemma proved in \cite[Section 4.2.2]{MR4632569}.

\begin{lemma} \label{lem:G1hull}
    $\mathfrak{H}^G_1 \subset N_R(\hh_{G_1})$
\end{lemma}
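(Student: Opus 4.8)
The plan is to reduce the statement to a single geometric claim about the polygons building $\mathfrak{H}^G_1$:
\[
\widetilde{Z}_t \cap \hh_{G_1} \neq \emptyset \quad \text{for every } t \in \widetilde{\sigma}_1^{(0)}.
\]
Granting this, the lemma follows at once: any $x \in \mathfrak{H}^G_1 = \bigcup_{t \in \widetilde{\sigma}_1^{(0)}} \widetilde{Z}_t$ lies in some $\widetilde{Z}_t$ with $t \in \widetilde{\sigma}_1^{(0)}$, so choosing $w \in \widetilde{Z}_t \cap \hh_{G_1}$ gives
\[
d(x, \hh_{G_1}) \leq d(x, w) \leq \diam(\widetilde{Z}_t).
\]
Since $\widetilde{Z}_t$ is convex in $\HH^2$ and $p_0$ restricts to a local isometry that is injective off $\partial_\alpha \widetilde{Z}_t$, it carries $\widetilde{Z}_t$ isometrically onto the polygon $Z_{p_0(t)}$, so $\diam(\widetilde{Z}_t) = \diam(Z_{p_0(t)}) \leq R$ by the definition of $R$. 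Hence $x \in N_R(\hh_{G_1})$.

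To establish the claim, I would first identify $\widetilde{\sigma}_1$ with the minimal $G_1$-invariant subtree $T^{G_1}$ of $T$. By construction $\widetilde{\sigma}_1$ is $G_1$-invariant; it has no valence-$1$ vertices because $\sigma_1$ has none and $\widetilde{\sigma}_1 \to \sigma_1$ is (the restriction of) the covering $p_0 \colon T^G \to \sigma_0$; and $G_1$ acts cocompactly on it because $\widetilde{\sigma}_1/G_1 = \sigma_1$ is compact. A subtree with no valence-$1$ vertices is the union of its bi-infinite geodesics, hence the convex hull of its set of ends, and for a cocompact action those ends are exactly the limit points of the $G_1$-orbits; therefore $\widetilde{\sigma}_1$ is the minimal $G_1$-invariant subtree. (Here $G_1$ is nontrivial and, being a subgroup of $G$, acts freely on $T$, so it has no global fixed point and $T^{G_1}$ genuinely is a minimal invariant subtree rather than a fixed vertex or edge.) Next, $G_1$ is a finitely generated subgroup of $\pi_1 S$ — finitely generated since $\Sigma_1$ is compact — and the proof of Lemma \ref{lem:TG4} (itself modelled on Lemma \ref{lem:multilemma}(4)) uses only that the acting group is such a subgroup; so it applies verbatim to give that a vertex $t$ of $T$ lies in $T^{G_1}$ if and only if $\hh_{G_1} \cap \widetilde{Y}_t^\circ \neq \emptyset$. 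Finally, $G_1 < G$ forces the limit set of $G_1$ to be contained in that of $G$, hence $\hh_{G_1} \subset \hh_G$; so for $t \in \widetilde{\sigma}_1^{(0)}$, which is the vertex set of $T^{G_1}$, we obtain
\[
\emptyset \neq \hh_{G_1} \cap \widetilde{Y}_t^\circ \subset \hh_G \cap \widetilde{Y}_t^\circ \subset \widetilde{Z}_t,
\]
proving the claim.

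The routine parts are the first-paragraph reduction and the identification $\widetilde{\sigma}_1 = T^{G_1}$ from the no-leaves property of $\sigma_1$. The step I expect to require the most care is checking that the proof of Lemma \ref{lem:TG4} — which invokes the identifications $\widetilde{Y}_t = \hh_{u_t}$ and the description of which complementary regions the convex hull of a subgroup of $\pi_1 S$ can meet — goes through unchanged with $G_1$ in place of $G$. A secondary point is confirming that $p_0$ really is an isometry on each polygon $\widetilde{Z}_t$, so that the constant $R$, defined in terms of the $Z_\tau$ downstairs, controls $\diam(\widetilde{Z}_t)$ upstairs.
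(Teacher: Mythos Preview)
Your proof is correct, and the reduction to showing $\widetilde{Z}_t \cap \hh_{G_1} \neq \emptyset$ for each $t \in \widetilde{\sigma}_1^{(0)}$ is exactly what the paper does as well. The paper, however, establishes this intersection by a direct construction rather than by identifying $\widetilde{\sigma}_1 = T^{G_1}$ and invoking the analogue of Lemma~\ref{lem:TG4}: since $\sigma_1$ has no valence-$1$ vertices, there is a closed loop in $\sigma_1$ without backtracking visiting every vertex; its geodesic representative $\gamma$ in $\Sigma_0$ meets every $Z_\tau$, and each $\widetilde{Z}_t$ with $t \in \widetilde{\sigma}_1^{(0)}$ is then crossed by a component of $p_0^{-1}(\gamma)$, which is the axis of an infinite cyclic subgroup of $G_1$ and hence lies in $\hh_{G_1}$. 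Your route is more structural and makes the identification $\widetilde{\sigma}_1 = T^{G_1}$ explicit, which is conceptually clean; the paper's loop argument is shorter and sidesteps the need to verify that the proof of Lemma~\ref{lem:TG4} transfers to $G_1$. Incidentally, your secondary concern dissolves on a closer reading of the definitions: in the paper $Z_\tau$ \emph{is} $\widetilde{Z}_t$ (only the map $Z_\tau \to \Sigma_0$ may fail to embed), so $R$ is already the maximum of the diameters of the upstairs polygons.
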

\begin{proof}
    Since $\sigma_1$ has no valence $1$ vertices, one can produce a closed loop with no backtracking that visits every vertex of $\sigma_1$. Let $\gamma$ be the geodesic representative in $\Sigma_0$ of any such loop. $\gamma$ intersects $Z_\tau$ for every $\tau \in \sigma_1^{(0)}$. Now, for every $t \in \widetilde{\sigma}_1^{(0)}$, there is a geodesic in $p_0^{-1}(\gamma)$ that is invariant by an infinite cyclic subgroup of $G_1$ and intersects $\widetilde{Z}_t$. Since any such geodesic lies in $\hh_{G_1}$, all $\widetilde{Z}_t$ with $t \in \widetilde{\sigma}_1^{(0)}$ are contained in $N_R(\hh_{G_1})$, completing the proof.
\end{proof}

\subsubsection{Reduction to Deep Simplices}
We now define and reduce to deep simplices. We say that a simplex $u \subset \ccs{S^z}$ is \emph{deep} if $N_2(p_0(T^u \cap T^G)) \subset \sigma_1$, where $N_2(\cdot)$ is the $2$-neighborhood in $\sigma_0$. The following lemma is key to the reduction and is slightly modified from \cite[Lemma 4.5]{MR4632569}.

\begin{lemma} \label{lem:deepsimplex}
    For any simplex $u \subset \cc{S^z}$, there exists a $g \in G$ such that $g \cdot u$ is a deep simplex; i.e.
    $$ N_2(p_0(T^{g \cdot u} \cap T^G)) \subset \sigma_1. $$
\end{lemma}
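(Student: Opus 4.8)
The plan is to translate $T^u\cap T^G$ by an element of $G$ until its $p_0$-image lands deep inside the part of $\sigma_0$ around which $\sigma_1$ was built, exploiting that $\set{e_1,\dots,e_E}$ meets every $\calH$-orbit of edges of $\sigma_0$ and that $p_0\colon T^G\to\sigma_0$ intertwines the $G$-action on $T^G$ with the $\calH$-action on $\sigma_0$.

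First I would dispose of the degenerate case: if $T^u\cap T^G=\emptyset$, then $N_2(p_0(T^u\cap T^G))=\emptyset\subset\sigma_1$ and $g=\id$ works; so assume $T^u\cap T^G\neq\emptyset$ and fix a vertex $t_0\in T^u\cap T^G$. Since $T^G$ carries a free cocompact action of the infinite group $G$ (Lemma \ref{lem:free}), it is infinite, so $t_0$ is incident to some edge $e_0$ of $T^G$; set $\overline{e}_0=p_0(e_0)$, an edge of $\sigma_0$. By construction, $\set{e_1,\dots,e_E}$ contains a representative of every $\calH$-orbit of edges of $\sigma_0$, so there are $i\in\oneto{E}$ and $\frakh\in\calH$ with $\frakh(\overline{e}_0)=e_i$. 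Using the surjection $G\to\calH$ of Section \ref{sec:Gquotient}, choose $g\in G$ mapping to $\frakh$.

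Next I would read off the consequences. Since $p_0$ is equivariant with respect to $G\to\calH$, we get $p_0(g\cdot e_0)=\frakh(\overline{e}_0)=e_i$, and hence $p_0(g\cdot t_0)$, being an endpoint of $e_i$, lies in $e_i$. By Lemma \ref{lem:treeintersectionGeqv}, $g(T^u\cap T^G)=T^{g(u)}\cap T^G$, so $p_0(T^{g(u)}\cap T^G)=\frakh\bigl(p_0(T^u\cap T^G)\bigr)$. Now I invoke the standing hypothesis: Proposition \ref{prop:bounddownstairs} gives $\diam(p_0(T^u\cap T^G))\leq D_0$, and since the $\calH$-action on $\sigma_0$ is by isometries (Section \ref{sec:Gquotient}), also $\diam(p_0(T^{g(u)}\cap T^G))\leq D_0$. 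Because $p_0(g\cdot t_0)$ lies both in this set and on $e_i$, every point of $p_0(T^{g(u)}\cap T^G)$ is within distance $D_0$ of $e_i$; that is, $p_0(T^{g(u)}\cap T^G)\subset N_{D_0}(e_i)$, and therefore $N_2(p_0(T^{g(u)}\cap T^G))\subset N_{D_0+2}(e_i)\subset\sigma_1$, the last inclusion being the defining property that $\sigma_1$ contains the closed $(D_0+2)$-neighborhood of each $e_i$. Thus $g\cdot u$ is deep.

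This is essentially a ``push into the fundamental domain'' argument, so I do not expect a genuine obstacle; the steps that need care are (i) verifying that $p_0$ truly intertwines the $G$-action on $T^G$ with the $\calH$-action on $\sigma_0$ — so that translating $u$ by $g$ corresponds to translating $p_0(T^u\cap T^G)$ by $\frakh$ — and that $\frakh$ preserves diameters in $\sigma_0$, and (ii) the choice of what to pin down: one should track a \emph{vertex} $t_0$ of $T^u\cap T^G$ together with an auxiliary edge $e_0$ of $T^G$ incident to $t_0$, rather than an edge of $T^u\cap T^G$ itself, since the intersection may be a single vertex. Given the constants already built into $\sigma_1$, everything else is bookkeeping.
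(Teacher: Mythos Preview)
Your proof is correct and follows essentially the same approach as the paper: translate by an element of $\calH$ (lifted to $G$) so that the image meets one of the representative edges $e_i$, then apply Proposition~\ref{prop:bounddownstairs} together with the second defining property of $\sigma_1$ and the equivariance from Lemma~\ref{lem:treeintersectionGeqv}. Your version is in fact slightly more careful than the paper's, which tacitly assumes $p_0(T^u\cap T^G)$ contains an edge; your device of anchoring at a vertex $t_0$ and using an incident edge $e_0$ of $T^G$ (not necessarily of $T^u\cap T^G$) handles the single-vertex case cleanly.
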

\begin{proof}
    For any $u$, there is some $\frakh \in \calH$ and one of the $\calH$-orbit representatives $e_i$ such that
    $$ e_i \subset \frakh(p_0(T^u \cap T^G)). $$
    Then $N_2(\frakh(p_0(T^u \cap T^G))) \subset \sigma_1$ by the second property of $\sigma_1$. Let $g \in G$ be any element that maps to $\frakh$ under the homomorphism $G \to \calH$. Since $p_0: T^G \to \sigma_0$ is equivariant with respect to this homomorphism, Lemma \ref{lem:treeintersectionGeqv} implies
    $$ p_0(g(T^u \cap T^G)) = \frakh(p_0(T^u \cap T^G)). $$
    This completes the proof.
\end{proof}

\begin{remark}\label{rmk:deep}
    The importance of Lemma \ref{lem:deepsimplex} is the following. If $u$ is not a deep simplex, then Lemma \ref{lem:deepsimplex} says we can find a $g \in G$ such that $g \cdot u$ is a deep simplex. Further, Lemma \ref{lem:treeintersectionGeqv} implies
    $$ \diam(T^u \cap T^G) = \diam(T^{g \cdot u} \cap T^G). $$
    Thus, a uniform bound for deep simplices will suffice for all simplices.
\end{remark}

\subsubsection{A Diameter Bound on $T^u \cap T^G$}
Having reduced to considering only deep simplices, we move to the proving Proposition \ref{prop:boundupstairs}. The strategy is to first obtain separate bounds for the hull and parallel subtrees. Then, we combine the two bounds to produce a bound for $T^u \cap T^G$. Leininger and Russell carried out this strategy in the $n=1$ case, and the same argument will work here. We give an outline of the proofs here but refer the reader to \cite[Section 4.2]{MR4632569} for detailed arguments.

The following lemma proves a bound on the diameters of the hull subtree. See \cite[Lemma 4.8]{MR4632569} for a detailed proof.
\begin{lemma} \label{lem:Dhull}
    There exists a constant $D^\hh > 0$ such that for any deep simplex $u \subset \cc{S^z}$, the diameter of the hull subtree $T^\hh$ is at most $D^\mathfrak{H}$.
\end{lemma}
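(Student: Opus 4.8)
The plan is to bound the hull subtree $T^\hh$ by comparing it with the convex hull $\hh_{G_1}$ of the finitely generated, purely pseudo-Anosov group $G_1$, and then invoking the $n=0$ result of \cite{MR2599078}. First I would recall that since $u$ is deep, $p_0(T^u \cap T^G) \subset \sigma_1$, so every vertex of the hull subtree $T^\hh$ lies in $\widetilde{\sigma}_1$, and in particular the associated polygons $\widetilde{Z}_t$ are among the finitely-many-up-to-$G_1$ pieces assembled into $\hh^G_1$. By definition, a vertex $t \in T^u \cap T^G$ is hull-type exactly when $\hh_u \cap \hh_G \cap \widetilde{Y}_t^\circ \neq \emptyset$; since $\widetilde{Z}_t = \widetilde{Y}_t \cap \hh_G$, this says the point of intersection lies in $\hh_u \cap \widetilde{Z}_t$. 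Combined with Lemma \ref{lem:G1hull} ($\hh^G_1 \subset N_R(\hh_{G_1})$), every hull-type vertex $t$ has $\widetilde{Z}_t$ — and hence a point of $\hh_u \cap \hh_G$ — within distance $R$ of $\hh_{G_1}$ in $\HH^2$.

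Next I would pass to the convex hull picture. For two hull-type vertices $t, t'$ of $T^\hh$, choose $x \in \hh_u \cap \hh_G \cap \widetilde{Y}_t^\circ$ and $x' \in \hh_u \cap \hh_G \cap \widetilde{Y}_{t'}^\circ$. By the previous paragraph, $x$ and $x'$ each lie within $R$ of $\hh_{G_1}$, so there are nearest-point projections $y, y' \in \hh_{G_1}$ with $d_{\HH^2}(x,y), d_{\HH^2}(x',y') \leq R$. Now the key input from \cite{MR2599078}: applied in the surface-group setting $G_1 < \pi_1S$, their central result gives a uniform constant $B$ — independent of the simplex $u$ — bounding $\diam_{\HH^2}(\hh_u \cap \hh_{G_1})$. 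To use it I would first observe that $y$ and $y'$ are close to $\hh_u$: since $x, x' \in \hh_u$ and $\hh_u$ is convex, the geodesic $[x,x']$ lies in $\hh_u$, and one checks that $y, y'$ lie within a bounded distance (depending only on $R$ and the hyperbolic $\delta$) of $[x,x'] \subset \hh_u$; hence $y, y' \in N_{R'}(\hh_u \cap \hh_{G_1})$ for a uniform $R'$, so $d_{\HH^2}(y, y') \leq B + 2R'$. Therefore $d_{\HH^2}(x,x') \leq B + 2R' + 2R =: D_1$, a bound depending only on the fixed data, not on $u$.

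Finally I would transfer this $\HH^2$-distance bound to a combinatorial distance bound in $T^G$. The equivariant, $K$-Lipschitz map $\HH^2 \to T$ from Section \ref{sec:gammaaction} sends $x \mapsto$ a point of the $\tfrac12$-neighborhood of $t$ and $x' \mapsto$ a point near $t'$, so $d_{T}(t,t') \leq K \cdot d_{\HH^2}(x,x') + 1 \leq KD_1 + 1$. Since $T^\hh$ is a subtree of $T^G$, distances in $T^\hh$ agree with distances in $T^G$ (or $T$), so $\diam(T^\hh) \leq KD_1 + 1 =: D^\hh$, which is the desired uniform bound. I expect the main obstacle to be the bookkeeping in the middle step: carefully invoking the \cite{MR2599078} bound on $\hh_u \cap \hh_{G_1}$ — which requires knowing $G_1$ is finitely generated (true, as $\Sigma_1$ is compact) and purely pseudo-Anosov (true, as $G_1 < G$) — and then correctly quantifying how the $R$-neighborhood fuzz interacts with $\hh_u$ to land the points $y,y'$ in a uniform neighborhood of $\hh_u \cap \hh_{G_1}$. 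None of this uses the rank of $H$, matching the remark in the text that the hull-subtree bound is "agnostic to the rank of $H$."
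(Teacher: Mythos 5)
Your overall route is the same as the paper's: use deepness and Lemma \ref{lem:G1hull} to put the hull intersection $\hh_u \cap \hh_G$ inside $N_R(\hh_{G_1})$, invoke the \cite{MR2599078} bound for the finitely generated, purely pseudo-Anosov group $G_1 < \pi_1S$, and transfer back to $T^\hh$ via the $K$-Lipschitz equivariant map $\HH^2 \to T$. However, there is a genuine gap in your middle step. You project $x, x'$ to nearest points $y, y' \in \hh_{G_1}$, note that $y, y'$ lie within $R$ of $[x,x'] \subset \hh_u$, and then conclude ``hence $y, y' \in N_{R'}(\hh_u \cap \hh_{G_1})$.'' That implication is false in general: a point of $\hh_{G_1}$ lying within distance $R$ of the convex set $\hh_u$ need not be anywhere near the intersection $\hh_u \cap \hh_{G_1}$ --- indeed that intersection can be empty while $\hh_u$ and $\hh_{G_1}$ still come within distance $R$ of each other, and even when it is nonempty the set of points of $\hh_{G_1}$ that are $R$-close to $\hh_u$ can a priori extend far beyond it. So the inequality $d(y,y') \leq B + 2R'$ does not follow from a bound $B$ on $\diam(\hh_u \cap \hh_{G_1})$.

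The fix is to cite the result in the form the paper actually uses: \cite[Corollary 5.2]{MR2599078} bounds $\diam\bigl(\hh_u \cap N_R(\hh_{G_1})\bigr)$ uniformly in $u$ (for the fixed $R$), with the neighborhood already built in --- this is where the purely pseudo-Anosov hypothesis on $G_1$ does its work. With that statement, no projection argument is needed: your points $x, x'$ themselves lie in $\hh_u \cap N_R(\hh_{G_1})$, so $d_{\HH^2}(x,x')$ is bounded directly, and your final Lipschitz-transfer step then goes through as written. (One further small point worth making explicit: deepness gives $p_0(T^\hh) \subset \sigma_1$, and you should note why the relevant lift is the $G_1$-invariant component $\widetilde{\sigma}_1$ of $p_0^{-1}(\sigma_1)$, so that the polygons $\widetilde{Z}_t$ over hull-type vertices really assemble into $\hh^G_1$; this is what justifies $\hh_u \cap \hh_G = \hh_u \cap \hh^G_1$.)
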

\begin{proof}[Proof outline]
    Under our equivariant Lipschitz map $\HH^2 \to T$ chosen in Section \ref{sec:gammaaction}, the hull intersection $\hh_u \cap \hh_G$ maps to a set of Hausdorff distance at most $\frac{1}{2}$ from $T^\hh$. Thus, it suffices to bound $\hh_u \cap \hh_G$. Using the fact that $u$ is deep, Lemma \ref{lem:multilemma}, and Lemma \ref{lem:G1hull}, we find that
    $$ \hh_u \cap \hh_G = \hh_u \cap \hh^G_1 \subset \hh_u \cap N_R(\hh_{G_1}). $$
    Finally, since $G_1 < \pi_1S$ is finitely generated and purely pseudo-Anosov, \cite[Corollary 5.2]{MR2599078} gives a uniform bound on $\hh_u \cap N_R(\hh_{G_1})$.
\end{proof}

The following lemma proves a bound on the diameters of the parallel subtrees. See \cite[Lemma 4.12]{MR4632569} for a detailed proof.
\begin{lemma} \label{lem:Dparallel}
    There exists a constant $D^{||} > 0$ such that for any deep simplex $u \subset \cc{S^z}$, the diameter of any parallel subtree $T^{||}$ is at most $D^{||}$.
\end{lemma}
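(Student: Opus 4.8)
The plan is to reproduce the argument of \cite[Lemma 4.12]{MR4632569}, which involves only the hyperbolic geometry of the fixed surface $S$ and makes no use of the rank of $H$ or of the global shape of $\Sigma_0$. First I would dispose of the degenerate cases: if $u$ contains a non-surviving curve then $T^u$ is a single vertex by Lemma \ref{lem:multilemma}(3), and if $T^u$ has finite diameter then it is a single vertex or a single edge by Lemma \ref{lem:multilemma}(2), so in either case $\diam(T^u\cap T^G)\le 1$. Hence I would assume $u\subset\ccs{S^z}$ is a multicurve with $T^u$ of infinite diameter, fix a geodesic edge path $\gamma=(t_0,\ldots,t_N)$ lying in a single parallel subtree $T^{||}$, and aim to bound $N$ by a constant depending only on $S$ and the fixed hyperbolic metric; this bounds the diameter of every parallel subtree.

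Since every vertex of $\gamma$ is parallel-type, Lemma \ref{lem:parallelstructure} supplies geodesics $\delta_u\subset\partial\hh_u$ and $\delta_G\subset\partial\hh_G$ that cross each dual geodesic $\widetilde\alpha_i$ transversely and never meet inside any $\widetilde Y_{t_i}$. Projecting by $p\colon\HH^2\to S$, the geodesic $p(\delta_u)$ is a component of $\partial(S\setminus\Phi(u))$ and hence lies on a single simple closed geodesic $c\subset S$, while $p(\delta_G)$ lies on a geodesic whose closure fills $S$ --- here I would invoke that $G_0=G\cap\pi_1S$ is purely pseudo-Anosov (which holds since $G$ is), Kra's Theorem \ref{thm:kra}, and the analysis of $\partial\hh_G$ in \cite[Section 5]{MR2599078}, exactly as in the $n=1$ case. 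Because $\delta_u$ and $\delta_G$ cross the same sequence $\widetilde\alpha_1,\ldots,\widetilde\alpha_N$ and traverse the same chain of lifts $\widetilde Y_{t_0},\ldots,\widetilde Y_{t_N}$ without ever crossing one another, the two projections run parallel over a portion whose length grows linearly in $N$: a segment of $c$ and a segment of the filling geodesic $p(\delta_G)$ stay within uniformly bounded Hausdorff distance in $S$, the bound depending only on the diameters of the finitely many complementary subsurfaces $Y_k$ and on the fixed short lengths of the reducing curves $\alpha_j$.

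The contradiction then comes from the fact that a filling geodesic cannot shadow a simple closed geodesic for arbitrarily long. Once the parallel portion exceeds a constant $L_\ast=L_\ast(S)$, the geodesic $p(\delta_G)$ would be trapped in an embedded collar of $c$ for a length that the uniformly controlled collar geometry along $\alpha$ cannot accommodate, forcing $p(\delta_G)$ to agree with $c$ --- impossible, since $p(\delta_G)$ fills. Thus $N\le D^{||}$ for a constant $D^{||}$ depending only on $S$ and the fixed metric, and since $\gamma$, the parallel subtree, and the deep simplex $u$ were arbitrary, the lemma follows.

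I expect the only genuine work to be the step that converts the purely combinatorial parallelism of Lemma \ref{lem:parallelstructure} into a uniform \emph{metric} statement --- that $p(\delta_u)$ and $p(\delta_G)$ remain a bounded distance apart along a segment of length comparable to $N$ --- together with the uniform incompatibility of a long simple-closed-geodesic segment and a long filling-geodesic segment running parallel. Both are carried out in \cite[Lemma 4.12]{MR4632569} (compare \cite[Section 5]{MR2599078}), and they depend only on there being finitely many complementary subsurfaces $Y_k$ up to isometry and on the reducing curves having fixed short length, both of which hold in our setting; so no modification to the $n=1$ argument is required.
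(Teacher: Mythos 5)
Your overall strategy is the right one --- this is the step the paper explicitly imports from \cite[Lemma 4.12]{MR4632569} --- but your write-up never uses the hypothesis that $u$ is \emph{deep}, and that omission hides a genuine gap. You assert that $p(\delta_G)$ ``lies on a geodesic whose closure fills $S$,'' justified by Kra's Theorem and the pure pseudo-Anosovness of $G_0$. Kra's Theorem applies to \emph{elements} of $\pi_1S$, i.e.\ to closed loops. But $\delta_G$ is an arbitrary boundary leaf of $\hh_G$, and since $G_0=\ker(\Phi_*|_G)$ is an infinite-index nontrivial normal subgroup of the finitely generated free group $G$, it is infinitely generated; hence $\Sigma_0=\hh_G/G_0$ is an infinite-type surface and a boundary leaf of $\hh_G$ need not be the axis of any element of $G_0$. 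For such a leaf there is no loop to which Kra's Theorem can be applied, and its projection to $S$ need not fill, so the ``filling geodesic shadowing a simple closed geodesic'' contradiction is not available.

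This is exactly what deepness repairs. In the paper's argument, deepness of $u$ forces the relevant segment $\delta_G'$ (the side of the quadrilateral cut off by $\widetilde\alpha_1$ and $\widetilde\alpha_n$) to satisfy $p_0(\delta_G')\subset\partial\Sigma_0\cap\partial\Sigma_1$, with $\Sigma_1$ compact. One then runs a dichotomy: either the component of $\partial\Sigma_0\cap\partial\Sigma_1$ containing $p_0(\delta_G')$ is a geodesic \emph{arc}, whose length is bounded by compactness of $\Sigma_1$, giving the bound on $n$ outright; or it is a closed geodesic $c$, which \emph{does} correspond to an element of the finitely generated, purely pseudo-Anosov subgroup $G_1=\pi_1\Sigma_1$, so Kra's Theorem applies and $\eta(c)$ fills $S$, after which your simple-versus-filling parallelism argument closes the proof. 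Your proposal needs both the reduction of the problem into $\Sigma_1$ via deepness and the arc case of the dichotomy; as written, it silently assumes every boundary leaf of $\hh_G$ is periodic and filling, which fails in this setting.
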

\begin{proof}[Proof outline]
    Let $t_0, \ldots, t_n$ be the vertices of a geodesic edge path in a parallel subtree $T^{||}$. From Lemma \ref{lem:parallelstructure}, we obtain geodesics $\delta_u$ and $\delta_G$ which form a convex hyperbolic quadrilateral with the geodesics $\widetilde{\alpha}_1$ and $\widetilde{\alpha}_n$. Let $\delta_G'$ denote the side of the quadrilateral contained in $\delta_G$. Using the fact that $u$ is deep, we find $p_0(\delta_G') \subset \partial\Sigma_0 \cap \partial\Sigma_1$. Each component of $\partial\Sigma_0 \cap \partial\Sigma_1$ is a geodesic arc or closed curve, and the compactness of $\Sigma_1$ gives a bound on the length each component. If $p_0(\delta_G')$ is contained in an arc, the bound on the arc gives a bound on $n$, as desired. Otherwise, $p_0(\delta_G')$ is contained in a geodesic curve $c$, and in fact, all of $\delta_G$ maps onto $c$. Now $c$ corresponds to an element of $G_1 = \pi_1\Sigma_1 < \pi_1S$ which is represented in $\pi_1S < \mcg{S^z,z}$ by the point push of $z$ along a geodesic curve $\gamma^{-1}$. Note $\gamma = \eta(c)$. Since $G_1$ is finitely generated and purely pseudo-Anosov, Theorem \ref{thm:kra} states that $\gamma$ is a filling curve on $S$. On the other hand, $p(\delta_u) \subset v = \Phi(u)$ is a simple closed curve. Using hyperbolic geometry, we produce a bound on how long a lift of a simple closed curve -- such as $\delta_u$ -- can travel close to a lift of $\gamma$ -- such as $\delta_G$.
\end{proof}

Finally, we combine the bounds to prove Proposition \ref{prop:boundupstairs}.
\begin{proof}[Proof of Proposition \ref{prop:boundupstairs}]
    We will show that the proposition holds for $D = D^\mathfrak{H} + 2 D^{||} + 2$.

    Let $t,t' \in T^u \cap T^G$ be any two vertices. Let $\gamma \subset T^u \cap T^G$ be the geodesic edge path between $t$ and $t'$. $\gamma$ decomposes into at most 5 segments: 1 segment contained in the hull subtree, 2 segments contained in parallel subtrees, and 2 edges joining the segments in the parallel subtrees with the segment in the hull subtree. It follows from Lemma \ref{lem:Dhull} that the length of the hull subtree segment is at most $D^\hh$, and it follows from Lemma \ref{lem:Dparallel} that the lengths of the parallel subtree segments are at most $D^{||}$. Thus, $l(\gamma) \leq D^\mathfrak{H} + 2 D^{||} + 2 = D$. Since $t,t'$ were arbitrary, we have that $\diam(T^u \cap T^G) \leq D$.
\end{proof}

\section{A Diameter Bound on $p_0(T^u \cap T^G)$} \label{sec:main}
In the previous section, we reduced the proof of Theorem \ref{thm:main2} to proving Proposition \ref{prop:bounddownstairs}. We now proceed to find a uniform bound on the diameter of $p_0(T^u \cap T^G)$, independent of $u$. We again employ the strategy of first obtaining separate bounds for $p_0(T^\hh)$ and $p_0(T^{||})$. Rather than compute these bounds directly in $\sigma_0$, it is more convenient to produce a bound in a space quasi-isometric to $\sigma_0$.

\subsection{Constructing $\sigma_{JK}$}
We recall the notation from Section \ref{sec:Haction} and the injective homomorphism $V: H \to \ZZ^{m+l}$. Let $\calM$ be the $(m+l)$-by-$n$ matrix whose columns are the vectors $V(h_i)$. The rows of $\calM$ correspond to the generators $\set{\tau_1, \ldots, \tau_m, \widehat{\psi}_1, \ldots, \widehat{\psi}_l}$. Continuing with our notation from Remark \ref{rmk:k'notation}, denote each of the first $m$ rows by $R_j$ for $j \in \oneto{m}$, and denote each of the last $l$ rows by $R_{k'}$ for $k \in \oneto{l}$. One way to interpret $\calM$ is as follows. If $h = h_1^{a_1} \circ \cdots \circ h_n^{a_n}$, then
\begin{equation} \label{eq:matrixM}
    \calM \begin{bmatrix} a_1 \\ \vdots \\ a_n \end{bmatrix} = V(h).
\end{equation}

The $n$ column vectors of $\calM$ are linearly independent since $V$ is injective, so there must be some collection of $n$ linearly independent row vectors. Fix one such collection $\calR$. Let $J \subset \oneto{m}$ be the indexing set containing the indices $j$ for which $R_j \in \calR$. Let $K \subset \oneto{l}$ be the indexing set containing the indices $k$ for which $R_{k'} \in \calR$. Let $\calN$ be the $n$-by-$n$ matrix whose rows are the vectors in $\calR$ ordered by increasing index. The square matrix $\calN$ is nonsingular and is obtained by deleting all but $n$ linearly independent rows of $\calM$.

No row vector in $\calR$ (or $\calN$) can be the zero vector. As a result, $\alpha_j$ is twist for all $j \in J$, and $Y_k$ is pseudo-Anosov for all $k \in K$. Recall from Section \ref{sec:Haction} the groups $H_j = \ker \widehat{\rho}_j$ and $H_{k'} = \ker \widehat{\rho}_{k'}$ and their isomorphic images $\overline{G}_j$ and $\overline{G}_{k'}$ in $G/G_0$. Also recall that if $\alpha_j$ is twist, then $(G/G_0)/\overline{G}_j \cong \ZZ$; if $Y_k$ is pseudo-Anosov, then $(G/G_0)/\overline{G}_{k'} \cong \ZZ$. The fact that $\calN$ is nonsingular implies
\begin{equation} \label{eq:Hintersection}
    \bigcap_{j \in J} H_j \cap \bigcap_{k \in K} H_{k'} = \{ \id_{H} \} \quad \text{and} \quad \bigcap_{j \in J} \overline{G}_j \cap \bigcap_{k \in K} \overline{G}_{k'} = \{ \id_{G/G_0} \}
\end{equation}

For each $j \in J$ and $k \in K$, and using the cellular, geometric action of $G/G_0$ on $\sigma_0$, we define quotient graphs $\sigma_j = \sigma_0 / \overline{G}_j$ and $\sigma_{k'} = \sigma_0 / \overline{G}_{k'}$ and let $p_j: \sigma_0 \to \sigma_j$ and $p_{k'}: \sigma_0 \to \sigma_{k'}$ denote the quotient maps. Let $\sigma_{JK}$ be the product cube complex $\prod_{j \in J} \sigma_j \times \prod_{k \in K} \sigma_{k'}$, and let $p_{JK}: \sigma_0 \to \sigma_{JK}$ be the product of quotient maps $\prod_{j \in J} p_j \times \prod_{k \in K} p_{k'}$.

Each quotient $\sigma_j$ admits a cellular, geometric action by $(G/G_0)/\overline{G}_j \cong \ZZ$. Each $\sigma_j$ also admits an action by $G/G_0$ which is no longer free, and the kernel of this action is exactly $\overline{G}_j$. The quotient maps $p_j: \sigma_0 \to \sigma_j$ are equivariant with respect to this action by $G/G_0$. The same holds for $\sigma_{k'}$ and $\overline{G}_{k'}$ and $p_{k'}$. Consider the diagonal action of $G/G_0$ on the product $\sigma_{JK}$. This action is free because the kernels of the action in each factor are the groups $\overline{G}_j$ and $\overline{G}_{k'}$, and their intersection is trivial (Equation \ref{eq:Hintersection}). The product map $p_{JK}$ is also $G/G_0$-equivariant.

\begin{lemma} \label{lem:qitosigma0}
    The map $p_{JK}: \sigma_0 \to \sigma_{JK}$ is a quasi-isometry.
\end{lemma}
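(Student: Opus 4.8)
The plan is to transfer the statement to the acting groups via the Milnor--Schwarz lemma \cite[Proposition 8.19]{MR1744486}, where it becomes a statement about an injective homomorphism between free abelian groups. Fix a basepoint $x_0 \in \sigma_0$ and let $o_0 \colon G/G_0 \to \sigma_0$ be the orbit map $\overline{g} \mapsto \overline{g}\cdot x_0$. Since $p_j$ and $p_{k'}$ are $G/G_0$-equivariant, so is $p_{JK}$, and with the basepoint $p_{JK}(x_0)$ the square
\[\begin{tikzcd}
G/G_0 \ar{r}{o_0} \ar{d}[swap]{q} & \sigma_0 \ar{d}{p_{JK}} \\
\prod_{j\in J}(G/G_0)/\overline{G}_j \times \prod_{k\in K}(G/G_0)/\overline{G}_{k'} \ar{r}{o} & \sigma_{JK}
\end{tikzcd}\]
commutes, where $q$ is the product of the quotient homomorphisms and $o$ is the product of the orbit maps $\overline{g}\,\overline{G}_j \mapsto \overline{g}\cdot p_j(x_0)$ and $\overline{g}\,\overline{G}_{k'} \mapsto \overline{g}\cdot p_{k'}(x_0)$. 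It then suffices to show that $o_0$, $o$, and $q$ are all quasi-isometries: then $p_{JK}\circ o_0 = o\circ q$ is a quasi-isometry, and precomposing with a quasi-inverse of $o_0$ shows that $p_{JK}$ itself is a quasi-isometry.

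For $o_0$ and $o$ I would invoke Milnor--Schwarz. The graph $\sigma_0 = T^G/G_0$ is connected, and since $T^G$ has uniformly finite valence by Lemma \ref{lem:free} and $G_0$ acts freely and cellularly, $\sigma_0$ is locally finite, hence a proper geodesic space. The group $G/G_0$ acts on $\sigma_0$ freely, cellularly, and with compact quotient $\sigma = T^G/G$, and a free cellular action on a locally finite complex is properly discontinuous (its cell stabilizers are trivial); hence $o_0$ is a quasi-isometry. For $j\in J$ the graph $\sigma_j = \sigma_0/\overline{G}_j$ is again connected and locally finite, and the residual group $(G/G_0)/\overline{G}_j \cong \ZZ$ (recall $\alpha_j$ is twist for $j\in J$) acts on it freely, cellularly, and with compact quotient $\sigma_0/(G/G_0) = \sigma$; hence the orbit map $(G/G_0)/\overline{G}_j \to \sigma_j$ is a quasi-isometry. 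The same reasoning applies to $\sigma_{k'}$ for $k\in K$, using that $Y_k$ is pseudo-Anosov and $(G/G_0)/\overline{G}_{k'} \cong \ZZ$. Since a finite product of quasi-isometries is a quasi-isometry of the product complex, $o$ is a quasi-isometry.

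It remains to check that $q$ is a quasi-isometry. By construction $q$ is the product of the quotient homomorphisms, so it is a homomorphism between free abelian groups, and its kernel is $\bigcap_{j\in J}\overline{G}_j \cap \bigcap_{k\in K}\overline{G}_{k'}$, which is trivial by \eqref{eq:Hintersection}; thus $q$ is injective. Since $\calR$ consists of $n$ vectors we have $|J| + |K| = n$, so the source and target of $q$ are both free abelian of rank $n$, and the image of $q$ therefore has finite index. An injective homomorphism between free abelian groups of equal rank is a bi-Lipschitz embedding with coarsely dense image, hence a quasi-isometry once the groups carry word metrics, which by Milnor--Schwarz are quasi-isometric to the graphs appearing in the diagram. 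This completes the plan.

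The points that require genuine care are the ones underlying the middle paragraph: verifying that the quotient graphs $\sigma_j$ and $\sigma_{k'}$ are locally finite and that the residual $\ZZ$-actions on them are free and cocompact, so that Milnor--Schwarz applies factor by factor. Once these are in hand, the conclusion is a diagram chase together with the elementary fact about injective endomorphisms of $\ZZ^n$.
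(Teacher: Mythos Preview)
Your argument is correct and follows essentially the same route as the paper: both apply the Milnor--Schwarz lemma to the $G/G_0$-actions and use equivariance of $p_{JK}$ to conclude. The only organizational difference is that the paper works with a triangle, applying Milnor--Schwarz directly to the diagonal $G/G_0$-action on $\sigma_{JK}$ (verifying it is free and cocompact using \eqref{eq:Hintersection}), whereas you expand this into a square by first applying Milnor--Schwarz factor-by-factor and then checking that the product-of-quotients map $q$ is an injective homomorphism $\ZZ^n \to \ZZ^n$; these are two phrasings of the same computation.
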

\begin{proof}    
    The action of $G/G_0$ on $\sigma_0$ is properly discontinuous and cocompact, so the Milnor-Schwarz Lemma \cite[Proposition 8.19]{MR1744486} says that the orbit map $G/G_0 \to \sigma_0$ is a quasi-isometry.

    The action of $G/G_0$ on $\sigma_{JK}$ is free and cellular, and thus properly discontinuous. Further, $G/G_0 \cong \ZZ^n$ acts cocompactly on each factor $\sigma_j$ or $\sigma_k$, and there are exactly $n$ factors. Since the kernel of the action on each factor is given by $H_j$ or $H_k$, and the intersection of all $\overline{G}_j$ and $\overline{G}_{k'}$ is the trivial group (Equation \ref{eq:Hintersection}), the action of $G/G_0$ on $\sigma_{JK}$ is also cocompact. The Milnor-Schwarz Lemma again tells us the orbit map $G/G_0 \to \sigma_{JK}$ is also a quasi-isometry. Since $p_{JK}$ is $G/G_0$-equivariant, after choosing appropriate orbit maps, the following diagram below commutes,

    \[\begin{tikzcd}[row sep=2.5em]
    & G/G_0 \ar[swap]{dl}{\text{QI}} \ar{dr}{\text{QI}} \\
    \sigma_0 \ar{rr}{p_{JK}} && \sigma_{JK}.
    \end{tikzcd}\]

    Since the other two maps in the diagram are quasi-isometries, $p_{JK}$ must also be a quasi-isometry.
\end{proof}

\subsection{Edge and vertex decorations} \label{sec:decorations}
To each edge $e$ and vertex $t$ of $T^G$, we will assign a bounded diameter subset $\dec{e} \subset \ac{A_e}$ and $\dec{t} \subset \acc{Y_t}$, respectively. These are called \emph{decorations} of the edges and vertices.

For each edge $e$ of $T^G$, there are exactly two geodesics in $\partial \hh_G$ that non-trivially intersect $\widetilde{\alpha}_e$. Define $\dec{e}$ to be the union of the images of these two geodesics under the covering map $\HH^2 \to A_e$. If $e$ and $e'$ are edges of $T^G$ that are in the same $G_0$-orbit, then $A_e = A_{e'}$ and $\dec{e} = \dec{e'}$ because $G_0$ preserves $\hh_G$.

For each vertex $t$ in $T^G$, each geodesic arc $\widetilde{\gamma}$ in $\widetilde{Z}_t \subset \widetilde{Y}_t$ with endpoints in $\partial_\alpha \widetilde{Z}_t$ projects to a geodesic path $\gamma$ in $Y_t$. For each such path $\gamma$, we consider the self-intersection number $\II(\gamma)$, which is the minimum number of double points of self-intersection over all representatives of the homotopy class rel endpoints (which is realized by the unique geodesic representative orthogonal to the boundary). For each $t$, there are only finitely many homotopy classes of such arcs $\gamma_1, \ldots, \gamma_{r(t)}$, and we define
$$ \dec{t} = \{ \beta \in \acc{Y_t} \mid i(\beta, \gamma_j) \leq 2\II(\gamma_j) \text{ for some } j \in \set{1,\ldots,r(t)} \}. $$

Note that by taking a representative of $\gamma_j$ with only double points of self-intersection realizing $\II(\gamma_j)$, we can construct an arc $\beta_j$ in $Y_t$ from surgery on these self-intersection points, and then pushing off, so that $i(\beta_j, \gamma_j) \leq 2\II(\gamma_j)$. In particular, $\dec{t} \neq \emptyset$. Moreover, any $\beta$ with $i(\beta, \gamma_j) \leq 2\II(\gamma_j)$ also satisfies $i(\beta, \beta_j) \leq 2\II(\gamma_j)$ since $\beta_j$ is constructed from arcs of $\gamma_j$. Since distance is bounded by a function of intersection number (see \cite{MR1714338}), it follows that $\dec{t}$ has finite diameter in $\acc{Y_t}$. As with the edge decorations, if $t$ and $t'$ are vertices in the same $G_0$-orbit, then $\dec{t} = \dec{t'}$.

The following lemma describes how these decorations behave under arbitrary elements of $G$.
\begin{lemma} \label{lem:decaction}
    For any edge $e$ or vertex $t$ of $T^G$ and $g \in G$, we have
    \begin{gather*}
        \dec{g(e)} = \Phi_*(g)(\dec{e}), \\
        \dec{g(t)} = \Phi_*(g)(\dec{t}).
    \end{gather*}
\end{lemma}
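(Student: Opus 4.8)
The plan is to unwind each decoration to its defining configuration in $\HH^2$, move that configuration by the homeomorphism $g$ of the $\Gamma_H$-action from Section~\ref{sec:gammaaction}, and identify the image with the defining configuration of $\dec{g(e)}$ (resp.\ $\dec{g(t)}$), acted on by $\Phi_*(g)$. Two inputs drive this. First, since the $\HH^2$-action, the $T$-action, and the map $\HH^2\to T$ of Section~\ref{sec:gammaaction} are compatible, $g$ carries $\widetilde{\alpha}_e$ onto $\widetilde{\alpha}_{g(e)}$ and $\widetilde{Y}_t$ onto $\widetilde{Y}_{g(t)}$; in particular $g\,K_e\,g^{-1}=K_{g(e)}$ and $g\,K_t\,g^{-1}=K_{g(t)}$, so $g$ descends to homeomorphisms $\bar g\colon A_e\to A_{g(e)}$ and $\bar g\colon Y_t\to Y_{g(t)}$. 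Second, $g$ acts on $\HH^2$ as a lift of $\Phi_*(g)\colon S\to S$ (it is the composite of a deck transformation with $\widetilde{h}_1^{\,a_1}\circ\cdots\circ\widetilde{h}_n^{\,a_n}$, where $(a_1,\dots,a_n)$ are the coordinates of $\Phi_*(g)\in H$), so, read through the identifications $A_e=A_{j(e)}=A_{g(e)}$ and $Y_t=Y_{k(t)}=Y_{g(t)}$ of Section~\ref{sec:gammaaction}, $\bar g$ is a lift of $\Phi_*(g)$ to the annular cover $A_{j(e)}$ (resp.\ a representative of the restriction of $\Phi_*(g)$ to $Y_{k(t)}$), and hence agrees with the action of $\Phi_*(g)$ used in Section~\ref{sec:Haction} --- here one uses that the cyclic subgroup of an essential simple closed curve and the fundamental group of an essential subsurface are self-normalizing in $\pi_1S$, so these covers have no nontrivial deck transformations and the identifications are canonical. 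Granting this, it remains to check that $\bar g$ sends the defining data of $\dec{e}$ to that of $\dec{g(e)}$, and likewise for vertices.

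The genuine obstacle is that the $\Gamma_H$-action on $\HH^2$ is \emph{not} isometric, so $g(\hh_G)\neq\hh_G$ in general, and therefore $g$ does not literally send the arc $\widetilde{\alpha}_e\cap\hh_G$, the polygon $\widetilde{Z}_t=\widetilde{Y}_t\cap\hh_G$, or the boundary geodesics of $\hh_G$ to the corresponding objects over $g(e)$ and $g(t)$. The resolution is that the decorations see these objects only up to isotopy --- they are subsets of the arc graph $\ac{A_e}$ and the arc-and-curve graph $\acc{Y_t}$ --- together with homeomorphism-invariant quantities, and $g$ does respect the relevant data: on $\partial\HH^2$ it preserves the limit set $\Lambda$ of $G$ (as $g\in G$), and on $T^G$ it preserves incidence. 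Concretely, I would record that $g(\hh_G)$ is carried to $\hh_G$ by an ambient isotopy of $\HH^2$ that preserves $p^{-1}(\alpha)$: the factor $\widetilde{h}_1^{\,a_1}\circ\cdots\circ\widetilde{h}_n^{\,a_n}$ of $g$ is properly isotopic rel $p^{-1}(\alpha)$ to a product of the homeomorphisms $\frakh_i$ of Section~\ref{sec:Gquotient}, which fix $\hh_G$ setwise, while the remaining deck-transformation factor already fixes $\hh_G$ because $G_0$ is normal in $G$, which forces it to stabilize $\Lambda$ and hence its convex hull.

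For an edge $e$: the two boundary geodesics of $\hh_G$ meeting $\widetilde{\alpha}_e$ are determined by the two complementary intervals of $\Lambda$ in $\partial\HH^2$ adjacent to the two points where $\widetilde{\alpha}_e$ crosses $\partial\hh_G$. Since $g$ preserves $\Lambda$ and carries the ideal endpoints of $\widetilde{\alpha}_e$ to those of $\widetilde{\alpha}_{g(e)}$, it carries this pair of intervals onto the corresponding pair for $\widetilde{\alpha}_{g(e)}$, hence carries the two boundary geodesics meeting $\widetilde{\alpha}_e$ to arcs isotopic, rel their ideal endpoints, to the two boundary geodesics meeting $\widetilde{\alpha}_{g(e)}$. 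As the vertex of $\ac{A_{g(e)}}$ represented by the image of a core-crossing arc depends only on the arc's pair of ideal endpoints, this says exactly $\dec{g(e)}=\bar g(\dec{e})$; combined with the identification of $\bar g$ with the $\Phi_*(g)$-action, this gives $\dec{g(e)}=\Phi_*(g)(\dec{e})$.

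For a vertex $t$: using the $p^{-1}(\alpha)$-preserving isotopy of the second paragraph, $g$ carries, up to isotopy, the polygon $\widetilde{Z}_t$ onto $\widetilde{Z}_{g(t)}$ with each side $\widetilde{a}_e$ going to $\widetilde{a}_{g(e)}$, so every geodesic arc of $\widetilde{Z}_t$ with endpoints in $\partial_\alpha\widetilde{Z}_t$ is sent to such an arc of $\widetilde{Z}_{g(t)}$; passing to $Y_{k(t)}$ via the identifications, this realizes a bijection between the finite families $\gamma_1,\dots,\gamma_{r(t)}$ of homotopy classes in $Y_t$ and $\gamma'_1,\dots,\gamma'_{r(g(t))}$ in $Y_{g(t)}$, implemented by $\bar g$, i.e.\ by the $\Phi_*(g)$-action on $Y_{k(t)}$. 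Because $\bar g$ is a homeomorphism it preserves self-intersection and geometric intersection numbers, so $\II(\gamma'_j)=\II(\gamma_j)$ and $i(\bar g(\beta),\gamma'_j)=i(\beta,\gamma_j)$ for every $\beta$; therefore $\beta\in\dec{t}$ if and only if $\bar g(\beta)\in\dec{g(t)}$, which is precisely $\dec{g(t)}=\Phi_*(g)(\dec{t})$. I expect the most delicate points to be the second paragraph --- reconciling the non-isometric $\HH^2$-action with the metric definitions of $\hh_G$, $\widetilde{Z}_t$, and $\widetilde{a}_e$ --- and the bookkeeping in the first paragraph that $\bar g$ is literally the Section~\ref{sec:Haction} action rather than a deck-twisted version of it.
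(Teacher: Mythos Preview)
Your proposal is correct and follows essentially the same approach as the paper's proof: both recognize that the non-isometric $\Gamma_H$-action on $\HH^2$ fails to preserve $\hh_G$ on the nose but does preserve the relevant combinatorial data (the pattern of crossings with $p^{-1}(\alpha)$, equivalently the ideal-endpoint/limit-set data) up to isotopy rel these crossings, and that $g$ descends to $\Phi_*(g)$ on $A_e$ and $Y_t$. Your write-up is considerably more detailed than the paper's---you spell out the limit-set argument for edges, invoke the $\frakh_i$ to build the $p^{-1}(\alpha)$-preserving isotopy, and justify why $\bar g$ coincides with the Section~\ref{sec:Haction} action via self-normalizing subgroups---whereas the paper compresses the edge case to the single observation that boundary geodesics of $\hh_G$ are determined by which components of $p^{-1}(\alpha)$ they cross, and handles the vertex case in one sentence.
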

\begin{proof}
    Given $g \in G$, $g$ does not necessarily preserve $\partial \hh_G$. On the other hand, $g$ does map each geodesic of $\partial \hh_G$ to a bi-infinite path that is homotopic, rel the ideal endpoints, to a geodesic in $\partial \hh_G$. This is because geodesics are completely determined by the components of $p^{-1}(\alpha)$ that are intersected. Since $g$ descends to a homeomorphism isotopic to the lift of $\Phi_*(g)$ on each $A_e = A_{g(e)}$, the first equation follows.

    For the second equation, let $\widetilde{\gamma} \subset \widetilde{Z}_t$ be any geodesic arc with endpoints in $\partial_\alpha \widetilde{Z}_t$ and let $\gamma$ be its image path in $Y_t$. We observe that $g$ descends to the restriction of $\Phi_*(g)$ to $Y_t = Y_{g(t)}$, and so maps $\gamma$ to a path $\Phi_*(g)(\gamma)$ homotopic rel $\partial Y_{g(t)}$ to the image of a geodesic in $\widetilde{Z}_{g(t)}$. Therefore, the restriction of $\Phi_*(g)$ to $Y_t$ maps the finite set of homotopy classes of paths defining $\dec{t}$ to those defining $\dec{g(t)}$, and hence sends $\dec{t}$ to $\dec{g(t)}$.
\end{proof}

\begin{corollary} \label{cor:B0}
    There exists a constant $B_0 > 0$ so that $$ \diam(\dec{e}),\diam(\dec{t}) \leq B_0, $$ for all vertices $t$ and edges $e$.
\end{corollary}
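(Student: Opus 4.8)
The plan is to combine the $G$-equivariance of the decorations (Lemma \ref{lem:decaction}) with the cocompactness of the $G$-action on $T^G$ (Lemma \ref{lem:free}). To begin, note that for each \emph{fixed} edge $e$ and vertex $t$ the sets $\dec e$ and $\dec t$ already have finite diameter: $\dec e$ is the union of the images of exactly two geodesics of $\partial\hh_G$ under $\HH^2\to A_e$, hence is a set of at most two vertices of $\ac{A_e}$; and the paragraph preceding the corollary shows $\dec t$ has finite diameter in $\acc{Y_t}$, since distance there is controlled by geometric intersection number with the finite family $\gamma_1,\dots,\gamma_{r(t)}$.

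The key observation is that the diameter of a decoration depends only on the $G$-orbit of the underlying edge or vertex. Indeed, given $g\in G$ and an edge $e$, the mapping class $\Phi_*(g)\in H<\mcg S$ lifts to a homeomorphism of the annular cover inducing a simplicial isomorphism $\ac{A_e}\to\ac{A_{g(e)}}$, which is an isometry after the identification $A_e=A_{g(e)}$ of Section \ref{sec:gammaaction}; likewise $\Phi_*(g)$ restricts to a homeomorphism $Y_t\to Y_{g(t)}$ inducing an isometry $\acc{Y_t}\to\acc{Y_{g(t)}}$. Since Lemma \ref{lem:decaction} gives $\dec{g(e)}=\Phi_*(g)(\dec e)$ and $\dec{g(t)}=\Phi_*(g)(\dec t)$, applying these isometries yields $\diam(\dec{g(e)})=\diam(\dec e)$ and $\diam(\dec{g(t)})=\diam(\dec t)$.

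Finally, by Lemma \ref{lem:free} the quotient $T^G/G$ has exactly $E$ edges and $V$ vertices, so $T^G$ has only finitely many $G$-orbits of edges and of vertices. Choosing orbit representatives $e_1,\dots,e_E$ and $t_1,\dots,t_V$ and setting
\[
B_0\ :=\ \max\Big(\max_{1\le i\le E}\diam(\dec{e_i}),\ \max_{1\le i\le V}\diam(\dec{t_i})\Big),
\]
which is finite as a maximum of finitely many finite quantities, the previous paragraph gives $\diam(\dec e)\le B_0$ and $\diam(\dec t)\le B_0$ for every edge $e$ and vertex $t$. There is no serious obstacle here; the only point requiring care is the claim that $\Phi_*(g)$ induces genuine isometries of the relevant arc and arc-and-curve complexes, which rests on these complexes being defined purely combinatorially from intersection data while mapping classes (and the lifts of Section \ref{sec:gammaaction}) act by simplicial automorphisms.
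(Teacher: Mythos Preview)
Your proof is correct and follows essentially the same approach as the paper: use Lemma \ref{lem:decaction} together with the fact that $\Phi_*(g)$ acts by simplicial automorphisms to conclude that decoration diameters are constant on $G$-orbits, then invoke cocompactness (Lemma \ref{lem:free}) to take a finite maximum over orbit representatives. You supply a bit more detail than the paper does, explicitly noting why each individual $\dec e$ and $\dec t$ has finite diameter and why $\Phi_*(g)$ induces genuine isometries, but the argument is the same.
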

\begin{proof}
    For any $g \in G$, $\Phi_*(G)$ acts by simplicial automorphisms on $\ac{A_e}$ and $\acc{Y_t}$ for every edge $e$ and vertex $t$. Combining this with Lemma \ref{lem:decaction}, we have
    \begin{gather*}
        \diam(\dec{g(e)}) = \diam(\Phi_*(g)(\dec{e})) = \diam(\dec{e}), \\
        \diam(\dec{g(t)}) = \diam(\Phi_*(g)(\dec{t})) = \diam(\dec{t}).
    \end{gather*}
    In other words, the diameters of decorations are shared among $G$-orbits of edges and vertices. Since there are only finitely many $G$-orbits of edges and vertices, it suffices to take $B_0$ to be the maximum diameter of $\dec{e}$ and $\dec{t}$ taken over a finite set of $G$-orbit representatives of edges $e$ and vertices $t$.
\end{proof}

Since $\dec{e} = \dec{e'}$ and $\dec{t} = \dec{t'}$ for $e,e'$ or $t,t'$ in the same $G_0$-orbit, these decorations on edges and vertices of $T^G$ descend to decorations on the edges and vertices of $\sigma_0 = T^G/G_0$. We denote these by $\dec{\varepsilon}$ and $\dec{\tau}$ for an edge $\varepsilon$ or vertex $\tau$ in $\sigma_0$. The action of $G/G_0$ on $\sigma_0$ induces an action on the decorations. As a result of Lemma \ref{lem:decaction}, this action satisfies the following analogous formulae,
\begin{gather*}
    \dec{\overline{g}(e)} = \phi(g)(\dec{e}), \\
    \dec{\overline{g}(t)} = \phi(g)(\dec{t}),
\end{gather*}
for every edge $\varepsilon$ and vertex $\tau$ in $\sigma_0$ and every $\overline{g} \in G/G_0$.

\subsection{Subsurface projections} \label{sec:ssp}
Given a multicurve $v \subset \cc{S}$, Masur and Minsky defined a projection of $v$ to the arc and curve graphs of subsurfaces and annular covers of $S$ \cite{MR1791145}. We describe these projections in the special cases of $A_e$ and $Y_t$.

For each vertex $t \in T^G$, the multicurve $v$ intersects $Y_t$ in a collection of disjoint curves and arcs, producing a (possibly empty) simplex of $\acc{Y_t}$. Let $\ssp{t}{v}$ be this simplex. We observe that $\ssp{t}{v}$ is precisely the set of essential arcs and curves that are in the image of $p^{-1}(v) \cap \widetilde{Y}_t$ under the covering map $\widetilde{Y}_t \to Y_t$. Since $Y_t = Y_{t'}$ if $t,t'$ are in the same $G$-orbit, we have $\ssp{t}{v} = \ssp{t'}{v}$ in this case. Similarly, for any two $Y_k$-dual vertices $t,t'$, we have $Y_t = Y_{t'}$ and $\ssp{t}{v} = \ssp{t'}{v}$. Thus, continuing with our previous notation we define $\ssp{k'}{v} = \ssp{t}{v}$ for any $Y_k$-dual $t$.

For each edge $e \subset T^G$, we define $\ssp{e}{v} \subset \ac{A_e}$ to be the set of essential arcs in the preimage of $v$ under the covering map $A_e \to S$. As in the case of $\ssp{t}{v}$, we note that $\ssp{e}{v}$ is precisely the essential arcs in the image of $p^{-1}(v)$ under the covering map $\HH^2 \to A_e$. Since $v$ is a collection of disjoint curves, $\ssp{e}{v}$ is a simplex of $\ac{A_e}$. Since the core curve of $A_e$ is a lift of a curve $\alpha_j \subset \alpha$, we have $\ssp{e}{v} \neq \emptyset$ if and only if $i(v,\alpha_j) \neq 0$. Since $A_e = A_{e'}$ if $e,e'$ are in the same $G$-orbit, we have $\ssp{e}{v} = \ssp{e'}{v}$ in this case. Similarly, for any two $\alpha_j$-dual edges $e,e'$, we have $A_e = A_{e'}$ and $\ssp{e}{v} = \ssp{e'}{v}$. Thus, we define $\ssp{j}{v} = \ssp{e}{v}$ for any $\alpha_j$-dual $e$.

We also define $\calS(v) = \set{\alpha_j \mid \ssp{j}{v} \neq \emptyset} \cup \set{Y_k \mid \ssp{k'}{v} \neq \emptyset}.$ In other words, $\calS(v)$ is the set of reducing curves and subsurfaces which intersect $v$.

Since $A_e$ and $Y_t$ are determined by the $G$-orbit of the edge or vertex, we can define projection for vertices and edges in $\sigma_0$ by
\begin{gather*}
    \ssp{\varepsilon}{v} = \ssp{e}{v}, \\
    \ssp{\tau}{v} = \ssp{t}{v},
\end{gather*}
where $\varepsilon = p_0(e)$ and $\tau = p_0(t)$. If $\varepsilon,\varepsilon'$ are edges in the same $G/G_0$-orbit, then $\ssp{\varepsilon}{v} = \ssp{\varepsilon'}{v}$; if $\tau,\tau'$ are vertices in the same $G/G_0$-orbit, then $\ssp{\tau}{v} = \ssp{\tau'}{v}$.

Given an edge $e$ or vertex $t$ of $T^G$, we let $d(\dec{e}, \ssp{e}{v})$ and $d(\dec{t}, \ssp{t}{v})$ denote the diameter of $\dec{e} \cup \ssp{e}{v}$ and $\dec{t} \cup \ssp{t}{v}$ in $\ac{A_e}$ and $\acc{Y_t}$, respectively. We also use similar notation for edges $\varepsilon$ and vertices $\tau$ in $\sigma_0$.

Given $B > 0$, we define in $T^G$ the following sets
\[
\begin{aligned}
    \widetilde{\mathcal{E}}(v,B) & = \{ e \subset T^G \mid e \text{ is twist}, \ssp{e}{v} \neq \emptyset, d(\dec{e}, \ssp{e}{v}) \leq B \}, \\
    \widetilde{\mathcal{V}}(v,B) & = \{ t \in T^G \mid t \text{ is pseudo-Anosov}, \ssp{t}{v} \neq \emptyset, d(\dec{t}, \ssp{t}{v}) \leq B \}.
\end{aligned}
\]

Now since being twist/pseudo-Anosov, $\ssp{e}{v}$/$\ssp{t}{v}$, and $\dec{e}$/$\dec{t}$ are shared by $G_0$-orbits of edges/vertices, we can also define in $\sigma_0$
\[
\begin{aligned}
    \mathcal{E}(v,B) & = \{ \varepsilon \subset \sigma_0 \mid \varepsilon \text{ is twist}, \ssp{\varepsilon}{v} \neq \emptyset, d(\dec{\varepsilon}, \ssp{\varepsilon}{v}) \leq B \}, \\
    \mathcal{V}(v,B) & = \{ \tau \in \sigma_0 \mid \tau \text{ is pseudo-Anosov}, \ssp{\tau}{v} \neq \emptyset, d(\dec{\tau}, \ssp{\tau}{v}) \leq B \},
\end{aligned}
\]
and notice
\[
\begin{aligned}
    \mathcal{E}(v,B) & = p_0(\widetilde{\mathcal{E}}(v,B)), \\
    \mathcal{V}(v,B) & = p_0(\widetilde{\mathcal{V}}(v,B)).
\end{aligned}
\]

In the $n=1$ case, $\sigma_0$ is quasi-isometric to $\ZZ$, and Leininger and Russell show that $\mathcal{E}(v,B)$ and $\mathcal{V}(v,B)$ are bounded sets, akin to the unions of intervals on a line. Unfortunately in the $n \geq 2$ case, $\sigma_0$ is quasi-isometric to $\ZZ^n$, and $\mathcal{E}(v,B)$ and $\mathcal{V}(v,B)$ are not bounded, but rather appear as something like ``strips in a plane'' in the case $n=2$. To handle this more complicated situation, we decompose $\mathcal{E}(v,B)$ and $\mathcal{V}(v,B)$ into appropriate pieces, then prove that the images of these pieces in appropriate quotients are again bounded.

Define
\[
\begin{aligned}
    \mathcal{E}_j(v,B) & = \{ \varepsilon \subset \mathcal{E}(v,B) \mid \varepsilon \text{ is } \alpha_j \text{-dual} \}, \\
    \mathcal{V}_k(v,B) & = \{ \tau \in \mathcal{V}(v,B) \mid \tau \text{ is } Y_k \text{-dual} \},
\end{aligned}
\]
and notice
\[
\begin{aligned}
    \mathcal{E}(v,B) & = \bigcup_{j=1}^m \mathcal{E}_j(v,B), \\
    \mathcal{V}(v,B) & = \bigcup_{k=1}^l \mathcal{V}_k(v,B).
\end{aligned}
\]

We similarly partition $\widetilde{\mathcal{E}}(v,B)$ and $\widetilde{\mathcal{V}}(v,B)$ into pieces $\widetilde{\mathcal{E}}_j(v,B)$ and $\widetilde{\mathcal{V}}_k(v,B)$. We will sometimes suppress the notation $(v,B)$ when the context is clear.

Recall that if an edge $\varepsilon$ is $\alpha_j$-dual for $j \in J$, $\varepsilon$ is always twist; if a vertex $\tau$ is $Y_k$-dual for $k \in K$, $\tau$ is always pseudo-Anosov. Further recall that $E,V$ is the number of $G$-orbits of edges and vertices in $T^G$, respectively. Let $E_j$ be the number of $G$-orbits of $\alpha_j$-dual edges in $T^G$; equivalently, $E_j$ is the number of $G/G_0$-orbits of $\alpha_j$-dual edges in $\sigma_0$. Let $V_k$ be the number of $G$-orbits of $Y_k$-dual vertices in $T^G$; equivalently, $V_k$ is the number of $G/G_0$-orbits of $Y_k$-dual vertices in $\sigma_0$. Note that $E = \sum_{j=1}^m E_j$ and $V = \sum_{k=1}^l V_k$.

\begin{lemma} \label{lem:pjkbounded}
    For any $B > 0$, there exists $M > 0$, such that the following hold for each simplex $v \subset \cc{S}$.
    \begin{enumerate}
        \item For each $j \in J$, $p_j(\mathcal{E}_j(v,B))$ is a union of at most $E_j$ sets of diameter at most $M$.
        \item For each $k \in K$, $p_{k'}(\mathcal{V}_k(v,B))$ is a union of at most $V_k$ sets of diameter at most $M$.
    \end{enumerate}
\end{lemma}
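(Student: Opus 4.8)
The plan is to prove (1) and (2) by the same argument, run with the data $(\alpha_j,\ac{A_j},\tau_j,H_j,\overline G_j,\sigma_j)$ for $j\in J$ in the first case and $(Y_k,\acc{Y_k},\widehat{\psi}_k,H_{k'},\overline G_{k'},\sigma_{k'})$ for $k\in K$ in the second; I describe the first. Every edge of $\mathcal{E}_j(v,B)$ is $\alpha_j$-dual, and there are exactly $E_j$ orbits of $\alpha_j$-dual edges under $G/G_0$ acting on $\sigma_0$, so $\mathcal{E}_j(v,B)$ is a union of at most $E_j$ subsets, each contained in a single $G/G_0$-orbit $\mathcal{O}$; it therefore suffices to bound $\diam\bigl(p_j(\mathcal{O}\cap\mathcal{E}_j(v,B))\bigr)$ by a constant independent of $v$ and $\mathcal{O}$. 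Fix such an $\mathcal{O}$, a representative $\varepsilon_0\in\mathcal{O}$, and a vertex $\xi_0\in\dec{\varepsilon_0}$ (which is nonempty); assume $\mathcal{O}\cap\mathcal{E}_j(v,B)\neq\emptyset$, which forces $\ssp{j}{v}\neq\emptyset$, and fix $\beta_0\in\ssp{j}{v}$. For every $\overline g\in G/G_0$ the edge $\overline g(\varepsilon_0)$ is again $\alpha_j$-dual, hence $A_{\overline g(\varepsilon_0)}=A_j$, $\ssp{\overline g(\varepsilon_0)}{v}=\ssp{j}{v}$, and $\dec{\overline g(\varepsilon_0)}=\phi(\overline g)(\dec{\varepsilon_0})$ with $\phi(\overline g)\in H$ acting by a simplicial automorphism of $\ac{A_j}$. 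Thus $\overline g(\varepsilon_0)\in\mathcal{E}_j(v,B)$ forces $d_{\ac{A_j}}(\phi(\overline g)(\xi_0),\beta_0)\le B$, and so any two edges of $\mathcal{O}\cap\mathcal{E}_j(v,B)$ satisfy $d_{\ac{A_j}}(\phi(\overline g)(\xi_0),\phi(\overline g')(\xi_0))\le 2B$.

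The crux is a matching lower bound, showing the points $\phi(\overline g)(\xi_0)$ march along a quasi-geodesic axis of $\tau_j$ in $\ac{A_j}$ as $\overline g$ varies. Write $n\colon G/G_0\to(G/G_0)/\overline G_j\cong\ZZ$ for the quotient map, fix $\overline g_*$ with $n(\overline g_*)=1$, put $h_*=\phi(\overline g_*)$, and let $d_j\neq 0$ be the $\tau_j$-exponent of $h_*$ in decomposition \eqref{eq:decomposition}. Since $H$ is abelian, any $\overline g$ with $n(\overline g)=N$ has $\phi(\overline g)=h_*^{\,N}h''$ with $h''\in H_j$, and $h_*^{\,N}=\tau_j^{\,Nd_j}w^{N}$ inside the free abelian group $\widehat{H}$, where $w$ is the product of the remaining generators occurring in $h_*$. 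Both $h''$ and $w^{N}$ have trivial $\tau_j$-exponent, so each is a product of at most $m+l-1$ powers of generators $\tau_i$ ($i\neq j$) and $\widehat{\psi}_k$, whose lifts fix a vertex of $\ac{A_j}$; by Claim \ref{clm:coarseidentity} and the triangle inequality over the finitely many generators, $h''$ and $w^{N}$ act on $\ac{A_j}$ with sup-distance to the identity bounded by a uniform constant $b_0$. Hence $d_{\ac{A_j}}(\phi(\overline g)(\xi_0),\tau_j^{\,Nd_j}(\xi_0))\le 2b_0$. Since $\tau_j$ is loxodromic on $\ac{A_j}$ (Lemma \ref{lem:diagonalaction}(1)), there are $\lambda_j>0$ and $c_j\ge 0$, depending only on $\alpha_j$, with $d_{\ac{A_j}}(\xi,\tau_j^{M}(\xi))\ge\lambda_j|M|-c_j$ for all $\xi$ and $M\in\ZZ$. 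As $\phi(\overline g),\phi(\overline g')$ are isometries, combining with the previous paragraph gives, for $N=n(\overline g')-n(\overline g)$,
$$ \lambda_j|d_j|\,|N|-c_j-2b_0\ \le\ d_{\ac{A_j}}\bigl(\phi(\overline g)(\xi_0),\phi(\overline g')(\xi_0)\bigr)\ \le\ 2B, $$
so $\{\,n(\overline g):\overline g(\varepsilon_0)\in\mathcal{E}_j(v,B)\,\}\subset\ZZ$ has diameter at most $(2B+c_j+2b_0)/(\lambda_j|d_j|)=:D_j$, independent of $v$ and $\mathcal{O}$.

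Finally, the $\ZZ$-action of $(G/G_0)/\overline G_j$ on $\sigma_j$ is cellular and geometric and $p_j$ is $G/G_0$-equivariant, so $p_j(\overline g(\varepsilon_0))=n(\overline g)\cdot p_j(\varepsilon_0)$ and the orbit map $\ZZ\to\sigma_j$ is a quasi-isometry (Milnor--Schwarz); pushing $D_j$ through it gives $\diam\bigl(p_j(\mathcal{O}\cap\mathcal{E}_j(v,B))\bigr)\le M_j$ for a constant $M_j$ depending only on $B$ and $\alpha_j$. This proves (1). Statement (2) is identical with $Y_k,\acc{Y_k},\widehat{\psi}_k,H_{k'},\overline G_{k'},\sigma_{k'}$ in the roles of $\alpha_j,\ac{A_j},\tau_j,H_j,\overline G_j,\sigma_j$; there $H_{k'}$ and the complementary factors act \emph{exactly} as the identity on $\acc{Y_k}$ by Lemma \ref{lem:diagonalaction}, so one may take $b_0=0$, and one uses loxodromicity of $\widehat{\psi}_k$ on $\acc{Y_k}$. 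Taking $M$ to be the maximum of the $M_j$ ($j\in J$) and the analogous constants for $k\in K$ completes the proof. I expect the middle step — splitting the loxodromic $\tau_j$-contribution off from the uniformly-bounded complementary factors, while working with $H$ rather than $\widehat{H}$ (so that an element of $H$ need not literally decompose as a $\tau_j$-power times an element of $H_j$, only up to the bounded action of the complementary factors) — to be the only real obstacle; the rest is orbit-map bookkeeping built on Lemma \ref{lem:diagonalaction}.
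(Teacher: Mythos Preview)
Your proof is correct and follows essentially the same route as the paper's: partition $\mathcal{E}_j(v,B)$ into its $E_j$ many $G/G_0$-orbits, use Lemma~\ref{lem:decaction} and the coarse-identity action of $H_j$ on $\ac{A_j}$ to reduce membership in $\mathcal{E}_j(v,B)$ to a constraint on the $\ZZ$-coordinate $n(\overline g)\in(G/G_0)/\overline G_j$, bound that coordinate via loxodromicity, and then push through the geometric $\ZZ$-action on $\sigma_j$. The only cosmetic difference is that the paper appeals directly to the fact (recorded after Lemma~\ref{lem:diagonalaction}) that $\phi(\overline g)\notin H_j$ acts loxodromically on $\ac{A_j}$, whereas you unpack this by explicitly splitting off the $\tau_j^{Nd_j}$-factor and bounding the complementary part via Claim~\ref{clm:coarseidentity}; both arguments yield the same interval bound on $N$.
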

\begin{proof}
    Fix $B > 0$ and a multicurve $v \subset \cc{S}$. We first prove (1).
    
    Fix $j \in J$ and an $\alpha_j$-dual edge $\varepsilon$ in $\sigma_0$. Choose $\overline{g} \in G/G_0$ such that the coset $\overline{g} \overline{G}_j \in (G/G_0)/\overline{G}_j$ maps to a generator of $\ZZ$ via the isomorphism $(G/G_0)/\overline{G}_j \cong \ZZ$. Then, $G/G_0 \cdot \varepsilon$ can be partitioned into coset orbits.
    \begin{equation*}
        G/G_0 \cdot \varepsilon = \bigsqcup_{n \in \ZZ} \overline{g}^n\overline{G}_j \cdot \varepsilon.
    \end{equation*}
    We claim that there is some finite interval $I_\varepsilon \subset \ZZ$ such that
    \begin{equation} \label{eq:Ejsetinterval}
        (G/G_0 \cdot \varepsilon) \cap \mathcal{E}_j(v,B) \subset \bigsqcup_{n \in I_\varepsilon} \overline{g}^n\overline{G}_j \cdot \varepsilon.
    \end{equation}
    The interval $I_\varepsilon$ can be computed as follows.
    
    Recall that $\ssp{\varepsilon'}{v} = \ssp{\varepsilon}{v}$ for all edges $\varepsilon'$ in $G/G_0 \cdot \varepsilon$. If $\ssp{\varepsilon}{v} = \emptyset$, then $(G/G_0 \cdot \varepsilon) \cap \mathcal{E}_j(v,B) = \emptyset$, and the containment \ref{eq:Ejsetinterval} holds for the empty interval. Suppose $\ssp{\varepsilon}{v} \neq \emptyset$. Since every $\alpha_j$-dual edge is twist, an arbitrary element
    $$\overline{g}^n\overline{g}_j(\varepsilon) \in (G/G_0 \cdot \varepsilon) \cap \mathcal{E}_j(v,B)$$
    satisfies
    $$ d(\dec{\overline{g}^n\overline{g}_j(\varepsilon)}, \ssp{\overline{g}^n\overline{g}_j(\varepsilon)}{v}) \leq B. $$
    From Lemma $\ref{lem:decaction}$, we know $\dec{\overline{g}^n\overline{g}_j(\varepsilon)} = \phi(\overline{g}^n\overline{g}_j)(\dec{\varepsilon}) = \phi(\overline{g})^n\phi(\overline{g}_j)(\dec{\varepsilon})$. Also $\ssp{\overline{g}^n\overline{g}_j(\varepsilon)}{v} = \ssp{\varepsilon}{v}$, and so we have
    $$ d(\phi(\overline{g})^n\phi(\overline{g}_j)(\dec{\varepsilon}), \ssp{\varepsilon}{v}) \leq B. $$
    By Lemma \ref{lem:diagonalaction}, we know $H_j = \phi(\overline{G}_j)$ acts coarsely as the identity on $\ac{A_j}$ with uniform bound $2$, and so $\diam(H_j \cdot a) \leq 2$ for any vertex $a \in \ac{A_j}$. Therefore, we have
    $$ d(\phi(\overline{g})^n(\dec{\varepsilon}), \ssp{\varepsilon}{v}) \leq B + 2. $$
    Notice that we chose $\overline{g} \not \in \overline{G}_j$, so $\phi(\overline{g})$ acts loxodromically on $\ac{A_j}$. Since $\dec{\varepsilon}$ is bounded, the set of integers $n$ for which $\phi(\overline{g}^n)(\dec{\varepsilon})$ can lie inside the $(B+2)$-neighborhood of $\ssp{\varepsilon}{v}$ is contained in some finite interval $I_\varepsilon \subset \ZZ$. The width $W_\varepsilon := |I_\varepsilon|$ depends only on $B+2$ and the loxodromic constants of the action of $\phi(\overline{g})$ on $\ac{A_j}$; in particular, it is independent of $v$.

    Recall that there are only $E_j$ distinct $G/G_0$-orbits of $\alpha_j$-dual edges in $\sigma_0$. If we pick a set $X$ containing a representative of each distinct $G/G_0$-orbit of $\alpha_j$-dual edges, then we have
    \begin{equation*}
        \mathcal{E}_j(v,B) = \bigcup_{\varepsilon \in X} (G/G_0 \cdot \varepsilon) \cap \mathcal{E}_j(v,B) \subset \bigcup_{\varepsilon \in X} \bigcup_{n \in I_\varepsilon} \overline{g}^n\overline{G}_j \cdot \varepsilon.
    \end{equation*}
    We consider the image under $p_j: \sigma_0 \to \sigma_j$
    \begin{equation*}
    \begin{aligned}
        p_j \big( \mathcal{E}_j(v,B) \big) & \subset p_j \Big( \bigcup_{\varepsilon \in X} \bigcup_{n \in I_\varepsilon} \overline{g}^n\overline{G}_j \cdot \varepsilon \Big) \\
        & = \bigcup_{\varepsilon \in X} \bigcup_{n \in I_\varepsilon} p_j(\overline{g}^n\overline{G}_j \cdot \varepsilon) \\
        & = \bigcup_{\varepsilon \in X} \bigcup_{n \in I_\varepsilon} \overline{g}^n(\overline{G}_j\varepsilon) \subset \sigma_j.
    \end{aligned}
    \end{equation*}
    where $\overline{G}_j\varepsilon$ is now thought of as an edge in $\sigma_j = \sigma_0 / \overline{G}_j$. The set $\bigsqcup_{n \in I_\varepsilon} \overline{g}^n(\overline{G}_j\varepsilon)$ now has diameter at most $W_\varepsilon$ times the distance in $\sigma_j$ between $\overline{G}_j\varepsilon$ and $\overline{g}(\overline{G}_j\varepsilon)$. Take $M$ to be the maximum such diameter over all $\varepsilon \in X$, and statement (1) follows.

    The proof of (2) is nearly identical, using a $Y_k$-dual vertex $\tau$ instead of an $\alpha_j$-dual edge $\varepsilon$, $\overline{G}_{k'}$ instead of $\overline{G}_j$, and $\acc{Y_k}$ instead of $\ac{A_j}$. In fact, it is slightly simpler because any $h \in H$ restricted to $Y_k$ is either pseudo-Anosov or the identity. For $h \in H_{k'}$, the restriction cannot be pseudo-Anosov, so we obtain the uniform bound $\diam(H_{k'} \cdot a) = 0$ (rather than 2) for any vertex $a \in \acc{Y_k}$.
\end{proof}

\subsection{Bound on Image of Parallel Subtrees}
We move to bounding the image of parallel subtrees. We will need the following lemma, which is \cite[Lemma 5.6]{MR4632569}.

\begin{lemma} \label{lem:Bparallel}
    There exists a constant $B^{||} \geq 0$ such that the following holds for any simplex $u \subset \ccs{S^z}$.
    \begin{enumerate}
        \item Let $l$ be an edge path of length $2$ in $T^u \cap T^G$, $t$ be the middle vertex of $l$ and $v = \Phi(u)$. If each vertex of $l$ is of parallel-type, then $d(\dec{t}, \ssp{t}{v}) \leq B^{||}$.
        \item Let $l$ be an edge path of length $3$ in $T^u \cap T^G$, $e$ be the middle edge of $l$ and $v = \Phi(u)$. If each vertex of $e$ is of parallel-type, then $d(\dec{e}, \ssp{e}{v}) \leq B^{||}$.
    \end{enumerate}
\end{lemma}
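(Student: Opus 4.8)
The plan is to reproduce the proof of \cite[Lemma 5.6]{MR4632569} essentially verbatim: nothing in it depends on the rank of $H$, as it draws only on the parallel structure of Lemma \ref{lem:parallelstructure}, the definitions of the decorations $\dec{t},\dec{e}$ and of the projections $\ssp{t}{v},\ssp{e}{v}$, and the uniform diameter bound of Corollary \ref{cor:B0}. First I would note that an edge path of length $\geq 2$ in $T^u\cap T^G$ cannot lie inside a single vertex or a single edge of $T$, so by Lemma \ref{lem:multilemma} the hypothesis of either part forces $T^u$ to have infinite diameter; hence Lemma \ref{lem:parallelstructure} applies and the degenerate shapes of $T^u$ are excluded. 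For part (1) I would apply Lemma \ref{lem:parallelstructure} to the length-$2$ path with vertices $t_0,t,t_1$, obtaining geodesics $\delta_u\subset\partial\hh_u$ and $\delta_G\subset\partial\hh_G$ that cross the dual geodesics $\widetilde{\alpha}_1,\widetilde{\alpha}_2$ transversely and are disjoint inside $\widetilde{Y}_t$; for part (2) I would apply it to the edge $e$, whose endpoints $t_1,t_2$ are parallel-type, obtaining $\delta_u,\delta_G$ that cross $\widetilde{\alpha}_e$ transversely and are disjoint inside both $\widetilde{Y}_{t_1}$ and $\widetilde{Y}_{t_2}$.

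For part (1): the interior of $\widetilde{Y}_t$ contains no component of $p^{-1}(\alpha)$, so $\delta_G\cap\widetilde{Y}_t$ and $\delta_u\cap\widetilde{Y}_t$ are disjoint geodesic arcs, each running from $\widetilde{\alpha}_1$ to $\widetilde{\alpha}_2$. Since $\delta_G\subset\partial\hh_G$ and $\widetilde{Z}_t=\widetilde{Y}_t\cap\hh_G$, the arc $\delta_G\cap\widetilde{Y}_t$ lies in $\widetilde{Z}_t$ with endpoints in $\partial_\alpha\widetilde{Z}_t$, so its image $\gamma_G$ in $Y_t$ is one of the finitely many arc classes $\gamma_1,\dots,\gamma_{r(t)}$ defining $\dec{t}$; thus $\dec{t}$ contains every $\beta\in\acc{Y_t}$ with $i(\beta,\gamma_G)\leq 2\II(\gamma_G)$, and $\diam(\dec{t})\leq B_0$. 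On the other side, $\delta_u\subset\partial\hh_u$ is a lift of a simple closed geodesic component $\beta$ of $v=\Phi(u)$ --- this follows from the description of $p(\hh_u^\circ)$ recalled after Theorem \ref{thm:tree-curve-complex} when $\Phi|_u$ is injective, and the remaining case is handled as in \cite[Lemma 5.6]{MR4632569} --- so the image $\gamma_u$ of $\delta_u\cap\widetilde{Y}_t$ in $Y_t$ is a component of the image of $p^{-1}(v)\cap\widetilde{Y}_t$, which lies in $\ssp{t}{v}$ when essential; and $\ssp{t}{v}$, being a simplex, has uniformly bounded diameter. It remains to bound the distance between $\gamma_u$ and $\gamma_G$ in $\acc{Y_t}$. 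Realizing $\gamma_G$ by its geodesic representative, each self-crossing of $\gamma_G$ corresponds to a crossing of two $K_t$-translates of $\delta_G\cap\widetilde{Y}_t$, and each crossing of $\gamma_u$ with $\gamma_G$ lifts to a crossing of the embedded arc $\delta_u\cap\widetilde{Y}_t$ (embedded because $\gamma_u$ is an arc of the simple curve $\beta$) with a nontrivial $K_t$-translate of $\delta_G\cap\widetilde{Y}_t$. Since $\delta_u\cap\widetilde{Y}_t$ and $\delta_G\cap\widetilde{Y}_t$ are disjoint and share the boundary geodesics $\widetilde{\alpha}_1,\widetilde{\alpha}_2$, they cobound an embedded quadrilateral $Q\subset\widetilde{Y}_t$ whose only obstruction to embedding under $\widetilde{Y}_t\to Y_t$ (which would make $\gamma_u$ and $\gamma_G$ isotopic) is the self-intersections of $\gamma_G$; an elementary count in $\widetilde{Y}_t$ then bounds $i(\gamma_u,\gamma_G)$ by a linear function of $\II(\gamma_G)$. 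Combined with the distance-versus-intersection-number estimate in $\acc{Y_t}$ (\cite{MR1714338}), the bound $\diam(\dec{t})\leq B_0$, and the diameter bound on $\ssp{t}{v}$, and after maximizing over the finitely many $G$-orbits of vertices, this produces a uniform bound for part (1).

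Part (2) is the verbatim annular analogue: pushing $\delta_G,\delta_u$ into $A_e=\HH^2/K_e$, the construction of $\dec{e}$ from the two geodesics of $\partial\hh_G$ that meet $\widetilde{\alpha}_e$ places the image of $\delta_G$ within $B_0$ of $\dec{e}$, the image of $\delta_u$ lies in $\ssp{e}{v}$ when that projection is nonempty, and the disjointness of $\delta_u,\delta_G$ inside $\widetilde{Y}_{t_1}$ and $\widetilde{Y}_{t_2}$ bounds their intersection number, hence their distance in $\ac{A_e}$. The degenerate cases are harmless: when $\ssp{t}{v}=\emptyset$ (resp.\ $\ssp{e}{v}=\emptyset$) one has $d(\dec{t},\ssp{t}{v})=\diam(\dec{t})\leq B_0$ (resp.\ the analogue for $e$); and when $\Phi|_u$ fails to be injective near $t$, or $\gamma_u$ is inessential, one reduces to the generic case exactly as in \cite[Lemma 5.6]{MR4632569}, using that $\delta_u$ still crosses the relevant dual geodesics and so contributes an essential arc in a neighboring subsurface or annulus. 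Letting $B^{||}$ be the maximum of the bounds from parts (1) and (2) completes the proof. The only step that requires real work --- and the one I expect would be the main obstacle, were it not that we may simply cite \cite{MR4632569} --- is the planarity/linking estimate bounding $i(\gamma_u,\gamma_G)$, and its annular counterpart, by the self-intersection number built into the decoration; everything else is bookkeeping plus appeals to Lemma \ref{lem:parallelstructure}, Corollary \ref{cor:B0}, and \cite{MR1714338}.
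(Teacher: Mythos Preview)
Your proposal is correct and matches the paper's approach exactly: the paper does not prove this lemma but simply cites it as \cite[Lemma 5.6]{MR4632569}, noting (as you do) that the argument there is independent of the rank of $H$. Your sketch of the cited proof is accurate and the observation that only Lemma \ref{lem:parallelstructure}, the decoration definitions, and Corollary \ref{cor:B0} are needed is precisely why the citation suffices.
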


The consequence is that away from the leaves of any parallel subtree, the distance between the decoration and subsurface projection of an edge or vertex is uniformly bounded. In practice, we can ensure any geodesic avoids the leaves simply by removing the edges on either end.

\begin{lemma} \label{lem:D0parallel}
    There exists a constant $D_0^{||} \geq 0$ such that for any simplex $u \subset \cc{S^z}$, if $T^{||}$ is a parallel subtree of $T^u \cap T^G$, then $\diam(p_0(T^{||})) \leq D_0^{||}$.
\end{lemma}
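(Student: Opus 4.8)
The plan is to bound $d_{\sigma_0}\big(p_0(a),p_0(b)\big)$ for two arbitrary vertices $a,b$ of $T^{||}$, after first clearing the degenerate cases. If $u$ contains a non-surviving curve, or more generally if $T^u$ has finite diameter, then by Lemma \ref{lem:multilemma} the set $T^u\cap T^G\subseteq T^u$ is a single vertex or edge, so $\diam(p_0(T^{||}))\le 1$ and we are done. So I would assume $u\subset\ccs{S^z}$ and $T^u$ has infinite diameter, set $v=\Phi(u)$ and $B=B^{||}$, and fix a geodesic edge path $\gamma=t_0e_1t_1\cdots e_nt_n$ in $T^{||}$; it then suffices to bound $d_{\sigma_0}\big(p_0(t_1),p_0(t_{n-1})\big)$ along the subpath $\gamma''=t_1e_2t_2\cdots e_{n-1}t_{n-1}$ (the two end edges cost only an additive constant, and if $n\le 3$ there is nothing to prove).

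The first substantive step is to show that along $\gamma$ the subsurface projections are nonempty and the decorations are controlled. Since every vertex of $\gamma$ lies in the parallel subtree $T^{||}$ it is parallel-type, so Lemma \ref{lem:parallelstructure} supplies geodesics $\delta_u\subset\partial\hh_u$ and $\delta_G\subset\partial\hh_G$ crossing every $\widetilde{\alpha}_i$ transversely. The geodesic $\delta_u$ projects into $v$ under $p$ (it is a boundary geodesic of $\hh_u$, whose interior maps to a component of $S\setminus v$; see the discussion following Theorem \ref{thm:tree-curve-complex} and \cite{MR4632569}). Hence $p(\delta_u)$ meets each curve $\alpha_{j(e_i)}$, which forces $\ssp{e_i}{v}\neq\emptyset$ for every edge $e_i$ of $\gamma$; and for each interior vertex $t_i$ with $1\le i\le n-1$, $\delta_u$ crosses both $\widetilde{\alpha}_i$ and $\widetilde{\alpha}_{i+1}$ on $\partial\widetilde{Y}_{t_i}$, so $\delta_u\cap\widetilde{Y}_{t_i}$ projects to an essential arc of $v$ in $Y_{t_i}$ and $\ssp{t_i}{v}\neq\emptyset$. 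Feeding this into Lemma \ref{lem:Bparallel} — whose length-$2$ and length-$3$ parallel paths are furnished by subpaths of $\gamma$ inside $T^{||}$ — I obtain that every twist edge of $\gamma''$ lies in $\widetilde{\mathcal{E}}(v,B)$ and every pseudo-Anosov vertex of $\gamma''$ lies in $\widetilde{\mathcal{V}}(v,B)$; in particular, every $\alpha_j$-dual edge of $\gamma''$ with $j\in J$ lies in $\widetilde{\mathcal{E}}_j(v,B)$ and every $Y_k$-dual vertex of $\gamma''$ with $k\in K$ lies in $\widetilde{\mathcal{V}}_k(v,B)$.

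Next I would pass to $\sigma_{JK}=\prod_{j\in J}\sigma_j\times\prod_{k\in K}\sigma_{k'}$: by Lemma \ref{lem:qitosigma0} it is enough to bound $d(p_jp_0(t_1),p_jp_0(t_{n-1}))$ in each $\sigma_j$ and $d(p_{k'}p_0(t_1),p_{k'}p_0(t_{n-1}))$ in each $\sigma_{k'}$. Fix $j\in J$. The $\alpha_j$-dual edges occurring along $\gamma''$ have $p_jp_0$-images lying in $p_j(\mathcal{E}_j(v,B))$, which by Lemma \ref{lem:pjkbounded} is a union of at most $E_j$ sets of diameter at most $M$; and by Corollary \ref{cor:Lbound} any two consecutive such edges (and the first one after $t_1$, the last one before $t_{n-1}$) lie within distance $L_0$ in $T^G$, so these images form a connected $L_0$-chain. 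An elementary observation — a connected $L_0$-chain inside a union of $N$ sets of diameter $\le M$ has diameter $\le N(2M+L_0)$ — together with the fact that $p_jp_0(t_1),p_jp_0(t_{n-1})$ each lie within $L_0+1$ of the chain (or $\gamma''$ has no $\alpha_j$-dual edge and hence length $<L_0$), bounds the $\sigma_j$-distance by a constant depending only on $B,M,L_0,E_j$. For $k\in K$ the argument is identical, using $Y_k$-dual vertices, vertex decorations, and Lemma \ref{lem:pjkbounded}(2); the only new ingredient is the vertex analogue of Corollary \ref{cor:Lbound}: any geodesic in $T^G$ of length $\ge L_0+2$ contains a $Y_k$-dual vertex, which holds because $\partial Y_k\neq\emptyset$ forces the geodesic to contain an interior edge dual to some $\alpha_j\subset\partial Y_k$, whose endpoints include a $Y_k$-dual vertex lying on the geodesic. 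Summing the $n$ factor bounds and applying the quasi-isometry constants of $p_{JK}$ yields $d_{\sigma_0}(p_0(t_1),p_0(t_{n-1}))\le D'$ for a uniform $D'$, whence $\diam(p_0(T^{||}))$ is bounded by a uniform constant $D_0^{||}$.

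The heart of the matter — and the genuine divergence from the $n=1$ case — is the third step: the sets $\mathcal{E}_j(v,B)$ and $\mathcal{V}_k(v,B)$ are unbounded in $\sigma_0$, and become bounded only after projecting to $\sigma_j$ (resp. $\sigma_{k'}$), so one must show a parallel geodesic cannot wander through too many of the finitely many bounded pieces. The periodicity statement of Corollary \ref{cor:Lbound} and its vertex analogue, together with the nonemptiness of all subsurface projections along a parallel geodesic established in the second step, are precisely what force the relevant images into a single connected chain, hence into a uniformly bounded set. I expect the subtlest point to be verifying that \emph{every} edge and interior vertex along a parallel geodesic has nonempty subsurface projection — this rests on Lemma \ref{lem:parallelstructure} and on $\delta_u$ projecting into $v$ — since without it the edges and vertices of $\gamma''$ need not lie in $\widetilde{\mathcal{E}}_j$ or $\widetilde{\mathcal{V}}_k$ at all, and the whole chain argument collapses.
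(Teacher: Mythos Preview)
Your proposal is correct and follows essentially the same approach as the paper: dispose of the finite-diameter case via Lemma~\ref{lem:multilemma}, use Lemma~\ref{lem:parallelstructure} to get nonempty projections along the parallel geodesic, apply Lemma~\ref{lem:Bparallel} to place the interior $\alpha_j$-dual edges and $Y_k$-dual vertices into $\widetilde{\mathcal{E}}_j(v,B^{||})$ and $\widetilde{\mathcal{V}}_k(v,B^{||})$, invoke Corollary~\ref{cor:Lbound} for periodicity, and then bound each factor of $\sigma_{JK}$ via Lemma~\ref{lem:pjkbounded} before pulling back through the quasi-isometry of Lemma~\ref{lem:qitosigma0}. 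The only cosmetic difference is that the paper packages the argument as a containment $p_{JK}(p_0(\gamma'))\subset \prod_{j\in J} N_{L_0}(p_j(\mathcal{E}_j))\times\prod_{k\in K} N_{L_0}(p_{k'}(\mathcal{V}_k))$ and bounds each factor directly, while you run an explicit $L_0$-chain argument inside each $\sigma_j$; your version is arguably more transparent about why connectedness forces the image into a single bounded cluster of the $E_j$ pieces.
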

\begin{proof}
    Fix a simplex $u \subset \cc{S^z}$. Lemma \ref{lem:multilemma}(2) and (3) show that if $T^u$ has finite diameter, then it actually has diameter at most $1$, and the conclusion is trivial. We thus suppose $T^u$ has infinite diameter; in particular, $u$ only contains surviving curves. Thus, we have $u \subset \ccs{S^z}$, and we let $v = \Phi(u)$.
    
    Let $\gamma \subset T^{||}$ be a geodesic in a parallel subtree. Let $\gamma' \subset \gamma$ be the subsegment obtained by removing the edges on either end. By Lemma \ref{lem:Bparallel}, for each edge $e \subset \gamma'$, we have $d(\dec{e}, \ssp{e}{v}) \leq B^{||}$. Similarly, for each $t \in \gamma'$, we have $d(\dec{t}, \ssp{t}{v}) \leq B^{||}$. Since we are in a parallel subtree, Lemma \ref{lem:parallelstructure} guarantees for each $e \subset \gamma'$, $\ssp{e}{v} \neq \emptyset$ and for each $t \in \gamma'$, $\ssp{t}{v} \neq \emptyset$.
    
    Fix some $j \in J$. For each $\alpha_j$-dual edge $e \subset \gamma'$, we have that $e$ is twist, $\ssp{e}{v} \neq \emptyset$, and $d(\dec{e}, \ssp{e}{v}) \leq B^{||}$, meaning $e \in \widetilde{\mathcal{E}}_j(v,B^{||})$. By Corollary \ref{cor:Lbound}, every length $L_0$ subsegment of $\gamma'$ must contain an $\alpha_j$-dual edge. Combining these two facts, the $L_0$-neighborhood of $\widetilde{\mathcal{E}}_j$ covers $\gamma'$. This argument holds for all $j \in J$, so
    
    $$ \gamma' \subset \bigcap_{j \in J} N_L(\widetilde{\mathcal{E}}_j). $$
    
    Similarly, fix some $k \in K$. For each $Y_k$-dual vertex $t \in \gamma'$, we have that $t$ is pseudo-Anosov, $\ssp{t}{v} \neq \emptyset$, and $d(\dec{t}, \ssp{t}{v}) \leq B^{||}$, meaning $t \in \widetilde{\mathcal{V}}_k(v,B^{||})$. Choosing any curve $\alpha_j \subset \partial Y_k$, every length $L_0$ subsegment of $\gamma'$ must contain an $\alpha_j$-dual edge and thus a $Y_k$-dual vertex. The $L_0$-neighborhood of $\widetilde{\mathcal{V}}_k$ covers $\gamma'$. This argument holds for all $k \in K$, so
    
    $$ \gamma' \subset \bigcap_{k \in K} N_L(\widetilde{\mathcal{V}}_k). $$
    
    Combining the statements,
    $$ \gamma' \subset \bigcap_{j \in J} N_L(\widetilde{\mathcal{E}}_j) \cap \bigcap_{k \in K} N_L(\widetilde{\mathcal{V}}_k) \subset T^G. $$
    
    We first consider the image under $p_0$,
    \begin{equation*}
    \begin{aligned}
        p_0(\gamma') & \subset p_0 \Big( \bigcap_{j \in J} N_L(\widetilde{\mathcal{E}}_j) \cap \bigcap_{k \in K} N_L(\widetilde{\mathcal{V}}_k) \Big) \\
        & \subset \bigcap_{j \in J} p_0 \big( N_L(\widetilde{\mathcal{E}}_j) \big) \cap \bigcap_{k \in K} p_0 \big( N_L(\widetilde{\mathcal{V}}_k) \big) \\
        & \subset \bigcap_{j \in J} N_L(\mathcal{E}_j) \cap \bigcap_{k \in K} N_L(\mathcal{V}_k) \subset \sigma_0.
    \end{aligned}    
    \end{equation*}
    
    Finally, we consider the image under $p_{JK}$,
    \begin{equation*}
    \begin{aligned}
        p_{JK} \big( p_0(\gamma') \big) & \subset p_{JK} \Big( \bigcap_{j \in J} N_L(\mathcal{E}_j) \cap \bigcap_{k \in K} N_L(\mathcal{V}_k) \Big) \\
        & \subset \bigcap_{j \in J} p_{JK} \big( N_L(\mathcal{E}_j) \big) \cap \bigcap_{k \in K} p_{JK} \big( N_L(\mathcal{V}_k) \big) \\
        & \subset \prod_{j \in J} p_j \big( N_L(\mathcal{E}_j) \big) \times \prod_{k \in K} p_{k'} \big( N_L(\mathcal{V}_k) \big) \\
        & \subset \prod_{j \in J} N_L \big( p_j(\mathcal{E}_j) \big) \times \prod_{k \in K} N_L \big( p_k(\mathcal{V}_k) \big) \subset \sigma_{JK}.
    \end{aligned}    
    \end{equation*}
    
    By Lemma \ref{lem:pjkbounded}, $N_L \big( p_j(\mathcal{E}_j) \big)$ can have diameter at most $E_j(M+2L)$ in $\sigma_j$, and $N_L \big( p_{k'}(\mathcal{V}_k) \big)$ can have diameter at most $V_k(M+2L)$ in $\sigma_k$, so the image
    \begin{equation*}
    \begin{aligned}
        p_{JK} \big( p_0(\gamma') \big) & \subset p_{JK} \Big( \bigcap_{j \in J} N_L(\mathcal{E}_j) \cap \bigcap_{k \in K} N_L(\mathcal{V}_k) \Big) \\
        & \subset \prod_{j \in J} N_L \big( p_j(\mathcal{E}_j) \big) \times \prod_{k \in K} N_L \big( p_{k'}(\mathcal{V}_k) \big) \subset \sigma_{JK}
    \end{aligned}    
    \end{equation*}
    can have diameter at most $(E+V)(M+2L)$ in $\sigma_{JK}$. Since $p_{JK}$ is a quasi-isometry for some quasi-isometric constants $(\kappa,\lambda)$, the set
    $$ p_0(\gamma') \subset \bigcap_{j \in J} N_L(\mathcal{E}_j) \cap \bigcap_{k \in K} N_L(\mathcal{V}_k) $$
    can have diameter at most $\kappa(E+V)(M+2L) + \lambda$ in $\sigma_0$. Setting $D_0^{||} = \kappa(E+V)(M+2L) + \lambda + 2$, we have
    $$ \diam(p_0(\gamma)) \leq D_0^{||}. $$
    Since $\gamma$ was an arbitrary geodesic in $T^{||}$,
    $$ \diam(p_0(T^{||})) \leq D_0^{||}. $$
\end{proof}

\subsection{Bound on Image of Hull Subtree}
We move to bounding the image of the hull subtree. For this case, we will need the following lemma, which is \cite[Lemma 5.9]{MR4632569}. Recall the constant $B_0$ from Corollary \ref{cor:B0}.
\begin{lemma} \label{lem:Bhull}
    There exists a constant $B^\mathfrak{H} \geq B_0$ such that for any simplex $u \subset \ccs{S^z}$, any hull-type vertex $t$, and any hull-type edge $e$, we have the following. Let $v = \Phi(u)$.
    \begin{enumerate}
        \item If $d(\dec{t}, \ssp{t}{v}) > B^\mathfrak{H}$, then $t$ is a valence $1$ vertex of the hull subtree.
        \item If $d(\dec{e}, \ssp{e}{v}) > B^\mathfrak{H}$, then the hull subtree is just the single edge $e$.
    \end{enumerate}
\end{lemma}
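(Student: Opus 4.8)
The plan is to prove both statements in contrapositive form, the substantive content being: if $t$ is not a leaf of $T^\hh$ (it has at least two incident $T^\hh$-edges) then $d(\dec{t},\ssp{t}{v})$ is uniformly bounded, and if $T^\hh$ strictly contains $e$ then $d(\dec{e},\ssp{e}{v})$ is uniformly bounded. First I would dispose of two easy cases. If $\ssp{t}{v}=\emptyset$ (resp.\ $\ssp{e}{v}=\emptyset$) then $d(\dec{t},\ssp{t}{v})=\diam\dec{t}\le B_0$ (resp.\ $d(\dec{e},\ssp{e}{v})=\diam\dec{e}\le B_0$) by Corollary~\ref{cor:B0}, so any $B^\hh\ge B_0$ suffices; and if $T^u$ has finite diameter then $T^u\cap T^G$, hence $T^\hh$, is a single vertex or edge by Lemma~\ref{lem:multilemma}(2) and the statements are immediate. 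So I assume $\ssp{t}{v}\ne\emptyset$ (resp.\ $\ssp{e}{v}\ne\emptyset$) and $T^u$ has infinite diameter. The only feature of $\hh_u$ I will use is that $\hh_u^\circ$ is disjoint from $p^{-1}(v)$ while $\hh_u$ meets $p^{-1}(v)$ only along $\partial\hh_u$, which consists of lifts of curves of $v$. The mechanism is that an interior cell of $T^\hh$ is crossed by a geodesic of $\hh_u\cap\hh_G$ which continues past it, and the relevant sub-arc of that geodesic is simultaneously one of the arcs defining the decoration and essentially disjoint from the lift of $v$.

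For part (1), I take $t$ with two distinct $T^\hh$-edges $e_-,e_+$ incident to it, with far endpoints $t_-\ne t_+$ (so $t_-,t,t_+$ are all vertices of $T^\hh$, hence hull-type), and choose $x_\pm\in\hh_u\cap\hh_G\cap\widetilde{Y}_{t_\pm}^\circ$. By convexity of $\hh_u$ and of $\hh_G$ the geodesic $[x_-,x_+]$ lies in $\hh_u\cap\hh_G$; as $t$ lies on the $T$-geodesic from $t_-$ to $t_+$, this geodesic crosses $\widetilde{\alpha}_{e_-}$, runs through $\widetilde{Y}_t^\circ$, and crosses $\widetilde{\alpha}_{e_+}$. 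Set $\ell:=[x_-,x_+]\cap\widetilde{Y}_t$: a geodesic arc of $\widetilde{Z}_t=\widetilde{Y}_t\cap\hh_G$ with endpoints on $\widetilde{a}_{e_-},\widetilde{a}_{e_+}\subset\partial_\alpha\widetilde{Z}_t$, so its image $\gamma$ in $Y_t$ represents one of the finitely many classes $\gamma_1,\dots,\gamma_{r(t)}$ defining $\dec{t}$. Since $\ell\subset\hh_u$, either the interior of $\ell$ misses $p^{-1}(v)$ (so $\gamma$ is disjoint from $\ssp{t}{v}$) or $\ell$ runs along a boundary geodesic of $\hh_u$ (so $\gamma$ runs along a component of $\ssp{t}{v}$ and has self-intersection number $0$). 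Either way the surgered arc $\beta\in\dec{t}$ associated to $\gamma$, with $i(\beta,\gamma)\le 2\II(\gamma)$, can be taken disjoint from $\ssp{t}{v}$, so $d(\beta,\ssp{t}{v})\le 1$; combined with $\diam\dec{t}\le B_0$ and $\diam\ssp{t}{v}\le 1$ this gives $d(\dec{t},\ssp{t}{v})\le B_0+2$.

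For part (2), I take a hull-type edge $e$ with $T^\hh\supsetneq\{e\}$; since $T^\hh$ is a tree, one endpoint $t_2$ of $e$ carries a second $T^\hh$-edge $e'$ (far endpoint $t_3$), while the other endpoint $t_1$ of $e$ is also a vertex of $T^\hh$. Choosing $x_1\in\hh_u\cap\hh_G\cap\widetilde{Y}_{t_1}^\circ$ and $x_3\in\hh_u\cap\hh_G\cap\widetilde{Y}_{t_3}^\circ$, the geodesic $[x_1,x_3]$ lies in $\hh_u\cap\hh_G$ and crosses $\widetilde{\alpha}_e$ exactly once, with definite room on each side: it reaches into $\widetilde{Y}_{t_1}^\circ$ on one side and runs all the way through $\widetilde{Y}_{t_2}^\circ$ and beyond on the other. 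Completing it to a bi-infinite geodesic still crossing $\widetilde{\alpha}_e$ once yields an essential arc $\bar\ell$ of the compactified annular cover $\overline{A}_e$. Near $\widetilde{a}_e=\widetilde{\alpha}_e\cap\hh_G$, the sides of $\widetilde{Z}_{t_1}$ and $\widetilde{Z}_{t_2}$ adjacent to $\widetilde{a}_e$ lie on the two boundary geodesics of $\hh_G$ that cross $\widetilde{\alpha}_e$ --- exactly the ones whose images form $\dec{e}$ --- and $\bar\ell$ passes between them, so $\bar\ell$ is boundedly close to $\dec{e}$ in $\ac{A_e}$; likewise, since $\ssp{e}{v}\ne\emptyset$ some lift of a curve of $v$ crosses $\widetilde{\alpha}_e$, and near $\widetilde{\alpha}_e$ the geodesic runs between the lifts of $v$-curves bounding $\hh_u$, so $\bar\ell$ is also boundedly close to $\ssp{e}{v}$. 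Hence $d(\dec{e},\ssp{e}{v})\le B'$ for a uniform $B'$, and I would take $B^\hh=\max\{B_0+2,B'\}$.

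I expect the genuine obstacle to lie entirely in part (2): promoting ``$\bar\ell$ passes between the two boundary geodesics near $\widetilde{\alpha}_e$'' to an actual bound on geometric intersection number in $\overline{A}_e$ requires controlling how many times $\bar\ell$ winds around the core, equivalently controlling the $K_e$-translates of $\dec{e}$ and of the relevant lift of a $v$-curve, and this forces one to make the ``definite room'' on the two sides of the crossing \emph{uniform} (using that both $t_1$ and $t_2$ carry hull data and that $t_2$ extends further). The remaining points --- that $\ell$ meets $p^{-1}(\alpha)$ only in its endpoints, and the $\ell\subset\partial\hh_u$ subcase in part (1) --- are routine. This is exactly what is done in \cite[Lemma~5.9]{MR4632569} (with \cite[Lemma~5.6]{MR4632569} treating the parallel-subtree analogue); since those arguments use only the convex-hull geometry near a single component of $p^{-1}(\alpha)$, they are insensitive to the rank of $H$ and apply verbatim in our setting.
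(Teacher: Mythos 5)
Your proposal matches the paper's treatment: the paper offers no argument for this lemma beyond the citation ``which is \cite[Lemma 5.9]{MR4632569},'' together with the observation (made in the introduction) that the Leininger--Russell proof is insensitive to the rank of $H$, which is precisely the conclusion you reach. Your geometric sketch of the mechanism --- a geodesic of $\hh_u\cap\hh_G$ crossing an interior cell of $T^\hh$ produces an arc that simultaneously realizes a decoration class and is essentially disjoint from (resp.\ boundedly linked with) $p^{-1}(v)$ --- is a faithful account of that cited argument, and your identification of the winding/intersection-number control in part (2) as the only nontrivial point is accurate.
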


Similar to the parallel subtree case, this lemma implies that if we avoid the leaves of the hull subtree, the decoration-subsurface projection distance is uniformly bounded. Complications arise, however, since there is no analog of Lemma \ref{lem:parallelstructure} to ensure non-empty subsurface projections. To remedy this issue, the following lemma shows that in the case of empty subsurface projections, the hull subtree geodesics are bounded.

\begin{lemma} \label{lem:Lbarbound}
    There is a constant $\overline{L} > 0$ such that for any simplex $u \subset \ccs{S^z}$, if there exists some geodesic edge path $\gamma \subset T^\hh \subset T^u \cap T^G$ with length $\geq \overline{L}$, then for each component $\alpha_j$ of $\alpha$, we have $\ssp{j}{v} \neq \emptyset$, where $v = \Phi(u)$.
\end{lemma}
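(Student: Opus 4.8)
The plan is to argue by contrapositive. Fix a component $\alpha_j$ of $\alpha$ with $\ssp{j}{v}=\emptyset$, equivalently with $i(v,\alpha_j)=0$; I will bound the length of every geodesic edge path contained in the hull subtree $T^\hh$ of $T^u\cap T^G$ by a constant depending only on the fixed data, and then take $\overline L$ one larger. By Lemma \ref{lem:multilemma}(2)--(3) we may assume $T^u$ has infinite diameter, so $u\subset\ccs{S^z}$ and $v$ is realized as a geodesic multicurve; set $W=p(\hh_u^\circ)$, a component of $S\setminus v$ (in the degenerate case where $\Phi|_u$ is not injective, $\hh_u$ is a single lift of a curve of $v$ and $T^u$ is the axis of the cyclic group $K_u$, which is treated exactly like the first case below). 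Since $v\neq\emptyset$ we have $W\neq S$. Because $i(v,\alpha_j)=0$, the geodesic $\alpha_j$ is either disjoint from $W$ or is one of the finitely many components of the multicurve $\alpha\cap W$ inside the finite-type surface $W$.

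If $\alpha_j$ is disjoint from $W$, then no component of $p^{-1}(\alpha_j)$ crosses $\hh_u^\circ$; but an $\alpha_j$-dual edge $e$ of $T^u$ would, by Lemma \ref{lem:multilemma}(4), have both complementary regions dual to its endpoints meeting the connected set $\hh_u^\circ$, forcing the dual geodesic $\widetilde\alpha_e\in p^{-1}(\alpha_j)$ to cross $\hh_u^\circ$. Hence $T^u$, and so $T^\hh\subset T^u\cap T^G\subset T^G$, contains no $\alpha_j$-dual edge, and Corollary \ref{cor:Lbound} bounds the length of any geodesic edge path in $T^\hh$ by $L_0$. This also disposes of the degenerate case, since there $T^u$ is the axis of $K_u$ and its edges are dual to lifts of $\alpha$-curves crossing a component of $v$, none of which can be $\alpha_j$.

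The substantive case is $\alpha_j\subset W$, and here I would suppose for contradiction that $\gamma\subset T^\hh$ is a geodesic edge path of length $\geq\overline L$ with $\overline L$ large. Choosing points of $\hh_u\cap\hh_G$ in the complementary regions dual to the endpoints of $\gamma$ and joining them, convexity produces a geodesic segment $[x,x']\subset\hh_u\cap\hh_G$ which, after adjusting the equivariant map $\HH^2\to T$, maps onto $\gamma$; as that map is Lipschitz and $[x,x']$ crosses the $\geq\overline L$ components of $p^{-1}(\alpha)$ dual to the edges of $\gamma$, pairwise at distance $\geq2$, the length of $[x,x']$ grows linearly in $\overline L$. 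Now $K_u=\pi_1W$ acts cocompactly on $T^u$ (the Bass--Serre tree of $W$ dual to the finite multicurve $\alpha\cap W$) and $G$ acts cocompactly on $T^G$ (Lemma \ref{lem:free}), so, running a pigeonhole argument in the finite quotient $T^u/K_u$ as in the proof of Lemma \ref{lem:Gorbitsubsegments}, once $\overline L$ is large enough $\gamma$ contains an arbitrarily long subsegment lying on the axis in $T$ of some loxodromic $k\in K_u<\pi_1S$, and a second such argument in $T^G/G$ produces $g\in G$ carrying a long subsegment of that axis onto another subsegment of itself. Since $k\in\pi_1W$, the associated closed geodesic $c_k\subset W$ is disjoint from the curves of $v=\partial W$, hence is not filling in $S$, so by Kra's theorem (Theorem \ref{thm:kra}) $k$ represents a reducible element of $\mcg{S^z,z}$. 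On the other hand the fellow-travelling forces $g$ to act like a power of $k$ along this long subsegment; chasing this through — using that $G$ acts freely on $T^G$ and that distinct maximal cyclic subgroups of $\pi_1S$ intersect trivially — one extracts a nontrivial relation equating a power of $k$ with a power of $g$. But $g\in G$ is pseudo-Anosov, so every power of $g$ is pseudo-Anosov, contradicting the reducibility of $k$. Hence no such $\gamma$ exists once $\overline L$ is large enough.

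The main obstacle is this last extraction: converting the simultaneous $K_u$- and $G$-periodicity of a long geodesic in $T^u\cap T^G$ into an honest algebraic relation between a pseudo-Anosov $g\in G$ and the reducible $k\in K_u$. The delicate point is to arrange the extracted $G$-element to lie in $G_0=G\cap\pi_1S$, so that $\Phi_*$ annihilates it and the comparison takes place entirely inside $\pi_1S$ acting on $T_\alpha$; an equivalent and perhaps cleaner packaging is to show directly that the limit sets $\Lambda(\pi_1W)$ and $\Lambda(G)$ in $\partial\HH^2$ meet in at most finitely many points, since $\hh_u\cap\hh_G$ is automatically bounded once those limit sets are disjoint. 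In every formulation the conclusion rests on Kra's theorem: a pseudo-Anosov mapping class of $(S^z,z)$ cannot share a nontrivial power with an element whose associated geodesic is confined to a proper essential subsurface $W\subsetneq S$.
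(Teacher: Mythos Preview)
Your Case 1 is correct, but the gap you flag in Case 2 is fatal as stated, and neither of your proposed resolutions closes it. Sharing an axis and translation length in the Bass--Serre tree $T$ forces no algebraic relation between $k\in K_u<\pi_1S$ and $g\in G$: the stabilizer in $\Gamma_H$ of a bi-infinite geodesic in $T$ is far from cyclic (for each vertex $t$ the stabilizer in $\Gamma_H$ surjects onto $H$, so the geodesic stabilizer contains a copy of $\ZZ^n$), hence $gk^{-1}$ can be any axis-preserving element and you cannot conclude that $g$ fixes a curve in $\cc{S^z}$. Forcing $g\in G_0$ would repair this, since axis stabilizers in $\pi_1S$ are cyclic, but there is no pigeonhole available: $G/G_0\cong\ZZ^n$ is infinite. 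Your limit-set alternative is circular: bounding $\hh_u\cap\hh_G$ via \cite{MR2599078} requires a finitely generated purely pseudo-Anosov subgroup of $\pi_1S$, and $G_0$ is infinitely generated (a nontrivial normal subgroup of infinite index in the free group $G$ of rank $\geq 2$). The paper only obtains such a finitely generated $G_1<\pi_1S$ after constructing the compact $\Sigma_1$ in \S\ref{sec:constructsigma1}, and that construction already consumes the present lemma through Proposition \ref{prop:bounddownstairs}.

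The paper's resolution is to weaken the target. Rather than $\Phi_*(g)=1$, it arranges only that $\Phi_*(g)$ act trivially on those annuli $A_j$ and subsurfaces $Y_k$ that $v$ actually meets. This \emph{is} a finite pigeonhole: Lemma \ref{lem:Bhull} bounds $d(\dec{e},\ssp{e}{v})$ and $d(\dec{t},\ssp{t}{v})$ on interior hull edges and vertices, and via Lemma \ref{lem:decaction} together with the loxodromic actions of Lemma \ref{lem:diagonalaction} this confines each relevant coordinate of $V(\Phi_*(g_i))$ to an interval of width independent of $v$. Applying Lemma \ref{lem:Gorbitsubsegments} with $N$ larger than the number of such tuples produces two $g_i$'s with matching coordinates, so $g=g_{i_1}g_{i_0}^{-1}$ has $\Phi_*(g)$ preserving $p^{-1}(v)$. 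An $\alpha_j$-dual edge with $\ssp{j}{v}=\emptyset$ then lets one thread a $g$-invariant bi-infinite path through $\HH^2$ disjoint from $p^{-1}(v)$, so $g$ stabilizes a component of $\HH^2\setminus p^{-1}(v)$ and hence fixes a simplex of $\cc{S^z}$---contradicting pseudo-Anosov without ever requiring $g\in\pi_1S$.
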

\begin{proof}
    We first give the quantity $\overline{L}$, then argue the result. The construction of $\overline{L}$ follows.
    
    Given any $g \in G$, recall that $\Phi_*(g) \in H$ has a vector representation
    \[
    V(\Phi_*(g)) = \begin{bmatrix} q_1 \\ \vdots \\ q_m \\ q_{m+1} \\ \vdots \\ q_{m+l} \end{bmatrix}.
    \]
    Certain conditions on $g$ translate to numerical limitations on the entries of $V(\Phi_*(g))$ as we will explain. Even with no condition on $g$, recall that for each non-twist $\alpha_j$ and each identity component $Y_k$, we have $q_j = q_{k'} = 0$.

    Suppose that $g$ satisfies the condition that $g(e) = e'$ for some edges $e,e'$ of $T^G$ that are both $\alpha_j$-dual with $\ssp{j}{v} \neq \emptyset$ and
    \begin{equation*}
    \begin{gathered}
        d(\dec{e}, \ssp{j}{v}) \leq B^\hh, \\
        d(\dec{e'}, \ssp{j}{v}) \leq B^\hh,
    \end{gathered}
    \end{equation*}
    for $B^\hh$ from Lemma \ref{lem:Bhull}. Applying Lemma \ref{lem:decaction}, the second equation becomes
    $$ d(\Phi_*(g)(\dec{e}), \ssp{j}{v}) \leq B^\mathfrak{H}. $$
    We can decompose $\Phi_*(g) = \tau_j^{q_j} \circ h_j$, where $h_j \in H_j$. By Lemma \ref{lem:diagonalaction}, we know the action of $H_j$ on $\ac{A_j}$ moves points a distance at most $2$, so
    \begin{equation*}
    \begin{gathered}
        d((\tau_j^{q_j} \circ h_j)(\dec{e}), \ssp{j}{v}) \leq B^\mathfrak{H}, \\
        d(\tau_j^{q_j}(\dec{e}), \ssp{j}{v}) \leq B^\mathfrak{H} + 2.
    \end{gathered}
    \end{equation*}
    As in the proof of Lemma \ref{lem:pjkbounded}, since $\tau_j$ acts loxodromically on $\ac{A_j}$, it follows that $q_j$ is contained in some interval $I_e \subset \ZZ$ whose width $W_e$ depends only on $B^\mathfrak{H} + 2$ and the loxodromic constants of the action of $\tau_j$ on $\ac{A_j}$. In particular, the width $W_e$ is bounded independent of $v$. We set $W_j$ to be the maximum $W_e$ over representatives $e$ of the $E_j$ distinct $G$-orbits of $\alpha_j$-dual edges.

    Similarly, suppose that $g$ satisfies the condition that $g(t) = t'$ for some vertices $t,t$ of $T^G$ that are both $Y_k$-dual with $\ssp{k'}{v} \neq \emptyset$ and
    \begin{equation*}
    \begin{gathered}
        d(\dec{t}, \ssp{k'}{v}) \leq B^\mathfrak{H}, \\
        d(\dec{t'}, \ssp{k'}{v}) \leq B^\mathfrak{H}.
    \end{gathered}
    \end{equation*}
    By Lemma \ref{lem:decaction}, the second equation becomes
    $$ d(\Phi_*(g)(\dec{t}), \ssp{k'}{v}) \leq B^\mathfrak{H}. $$
    We decompose $\Phi_*(g) = \widehat{\psi}_k^{q_{k'}} \circ h_{k'}$, where $h_{k'} \in H_{k'}$. By Lemma \ref{lem:diagonalaction}, we know the action of $H_{k'}$ on $\acc{Y_k}$ is trivial, so
    \begin{equation*}
    \begin{gathered}
        d((\widehat{\psi}_k^{q_{k'}} \circ h_{k'})(\dec{t}), \ssp{k'}{v}) \leq B^\mathfrak{H}, \\
        d(\widehat{\psi}_k^{q_{k'}}(\dec{t}), \ssp{k'}{v}) \leq B^\mathfrak{H}.
    \end{gathered}
    \end{equation*}
    Since $\widehat{\psi}_k$ acts loxodromically on $\acc{Y_k}$, $q_{k'}$ is contained in some interval $I_t \subset \ZZ$ whose width $W_t$ depends only on $B^\mathfrak{H}$ and the loxodromic constants of the action of $\widehat{\psi}_k$ on $\acc{Y_k}$. In particular, the width $W_t$ is bounded independent of $v$. We set $W_{k'}$ to be the maximum $W_t$ over representatives $t$ of the $V_k$ distinct $G$-orbits of $Y_k$-dual vertices.

    We set $W$ to be the maximum width $W_j$ or $W_{k'}$ over all $\alpha_j$ and $Y_k$. We then set $N = W^{m+l} + 1$ and stress that this $N$ is independent of the choice of $v$. Now take $L$ to be the constant obtained from Corollary \ref{cor:Lbound}, and take $\mathfrak{L}(N,L)$ from Lemma \ref{lem:Gorbitsubsegments}. We claim that setting $\overline{L} = \mathfrak{L}(N,L) + 2$ will suffice, and remains independent of $v$.
    
    We now prove the result for $\overline{L} = \mathfrak{L}(N,L) + 2$. Suppose $\gamma \subset T^\hh$ is a geodesic edge path of length $\overline{L}$. Let $\gamma' \subset \gamma$ be the subsegment obtained by removing the edges on either end. Since the length of $\gamma'$ is at least $\mathfrak{L}(N,L)$, Lemma \ref{lem:Gorbitsubsegments} states that we can find $N+1$ disjoint subsegments $\gamma_0, \ldots, \gamma_{N} \subset \gamma'$ each of which are length $L$ and lie in the same $G$-orbit. Since each subsegment lies in the same $G$-orbit, we can find group elements $g_1, \ldots, g_N \in G$, such that $g_i(\gamma_0) = \gamma_i$ for $i \in \oneto{N}$. Since $\gamma_0$ has length $L$, by Corollary \ref{cor:Lbound} it must contain at least one edge dual to each $\alpha_j$ and by extension must also contain at least one vertex dual to each $Y_k$.

    Recall $\calS(v)$ is the set of reducing curves and subsurfaces $v$ intersects. Let $V'(h)$ be the tuple obtained from $V(h)$ by omitting all entries except the ones corresponding to curves or subsurfaces in $\calS(v)$.

    Fix an $\alpha_j \in \calS(v)$. If $\alpha_j$ is non-twist, the $\alpha_j$ entry of $V'(h)$ for any $h$ is $0$. If $\alpha_j$ is twist, let $e_0 \subset \gamma_0$ be an $\alpha_j$-dual edge. Now for each $i \in \oneto{N}$, we have $e_i = g_i(e_0) \subset \gamma_i$, and by Lemma \ref{lem:Bhull},
    \begin{equation*}
    \begin{gathered}
        d(\dec{e_0}, \ssp{j}{v}) \leq B^\mathfrak{H}, \\
        d(\dec{e_i}, \ssp{j}{v}) \leq B^\mathfrak{H}.
    \end{gathered}
    \end{equation*}
    Thus, for every $i$, the $\alpha_j$ entry of $V'(\Phi_*(g_i))$ lies in an interval of width $\leq W$.

    Similarly, fix a $Y_k \in \calS(v)$. If $Y_k$ is an identity component, the $Y_k$ entry of $V'(h)$ for any $h$ is $0$. If $Y_k$ is pseudo-Anosov, let $t_0 \in \gamma_0$ be a $Y_k$-dual edge. Now for each $i \in \oneto{N}$, we have $t_i = g_i(t_0) \in \gamma_i$, and
    \begin{equation*}
    \begin{gathered}
        d(\dec{t_0}, \ssp{k'}{v}) \leq B^\mathfrak{H}, \\
        d(\dec{t_i}, \ssp{k'}{v}) \leq B^\mathfrak{H}.
    \end{gathered}
    \end{equation*}
    Thus, for every $i$, the $Y_k$ entry of $V'(\Phi_*(g_i))$ lies in an interval of width $\leq W$.

    In summary, there are at most $W$ possible integer-values that each entry of $V'(\Phi_*(g_i))$ could take, and there are $|\calS(v)| \leq m+l$ total entries. Thus, there are at most $W^{m+l}$ possible tuple-values that each $V'(\Phi_*(g_i))$ could take. However, there are $N = W^{m+l} + 1$ total $g_i$, so the pigeonhole principle implies that there must be some $i_0 \neq i_1$ such that $V'(\Phi_*(g_{i_0})) = V'(\Phi_*(g_{i_1}))$.

    Let $g = g_{i_1}g_{i_0}^{-1}$; we have $g(\gamma_{i_0}) = \gamma_{i_1}$. Observe that
    \begin{align*}
        V'(\Phi_*(g)) & = V'(\Phi_*(g_{i_1}g_{i_0}^{-1})) \\
        & = V'(\Phi_*(g_{i_1})\Phi_*(g_{i_0}^{-1})) \\
        & = V'(\Phi_*(g_{i_1})) + V'(\Phi_*(g_{i_0}^{-1})) \\
        & = V'(\Phi_*(g_{i_1})) - V'(\Phi_*(g_{i_0})) \\
        & = 0.
    \end{align*}
    Thus, $\Phi_*(g)$ acts as the identity on each curve and subsurface in the $\calS(v)$. In particular, $p^{-1}(v)$ is $g$-invariant in $\HH^2$.
    
    Suppose for contradiction that for some component $\alpha_j \subset \alpha$, we have $\ssp{j}{v} = \emptyset$. Let $e_0$ be an $\alpha_j$-dual edge in $\gamma_0$. Then $e_{i_0} = g_{i_0}(e_0)$ and $e_{i_1} = g_{i_1}(e_0)$ are $\alpha_j$-dual edges in $\gamma_{i_0}$ and $\gamma_{i_1}$, respectively. Further, $g(e_{i_0}) = e_{i_1}$.

    We will now produce a $g$-invariant axis in $\HH^2$ disjoint from $p^{-1}(v)$. Recall that we have embedded $T^G$ into $\hh_G \subset \HH^2$, $G$-equivariantly on the vertices. Using this embedding, let $\gamma'' \subset \gamma'$ be the subsegment which starts with $e_{i_0}$ and ends with $e_{i_1}$. Let $\nu \subset \gamma''$ be the subpath starting from $e_{i_0} \cap \widetilde{\alpha}_{e_{i_0}}$ and ending at $e_{i_1} \cap \widetilde{\alpha}_{e_{i_1}}$. Note $\widetilde{\alpha}_{e_{i_0}}, \widetilde{\alpha}_{e_{i_1}}$ are both disjoint from $p^{-1}(v)$, so either we can homotope $\nu$ disjoint from $p^{-1}(v)$ or some component $\widetilde{v} \subset p^{-1}(v)$ separates $\widetilde{\alpha}_{e_{i_0}}$ from $\widetilde{\alpha}_{e_{i_1}}$. The second case cannot occur however, since then $\hh_G \cap \hh_u$ must lie on one side or the other of $\widetilde{v}$, which contradicts the fact that both $e_{i_0}$ and $e_{i_1}$ are in $T^\hh$. Thus, there is some $\nu'$ homotopic rel endpoints to $\nu$ in $\HH^2$ that is disjoint from $p^{-1}(v)$.

    Let $\nu''$ be the path obtained by concatenating $\nu'$ with an arc of $\widetilde{\alpha}_{e_{i_1}}$ from the terminal endpoint of $\nu'$ to the $g$-image of the initial endpoint. Since $\widetilde{\alpha}_{e_{i_1}}$ is disjoint from $p^{-1}(v)$, $\nu''$ remains disjoint from $p^{-1}(v)$. Now set
    $$ \widetilde{\nu} = \bigcup_{n \in \ZZ} g^n(\nu''), $$
    which is a bi-infinite, $g$-invariant path that again remains disjoint from $p^{-1}(v)$. Thus, $\widetilde{\nu}$ is contained in a single component of $\HH^2 \setminus p^{-1}(v)$ and witnesses the fact that $g$ is contained in the stabilizer of this component. The closure of this component is $\hh_{u_0}$ for some curve $u_0$ in $S^z$ with $\Phi(u_0) = v$, and therefore $g$ fixes $u_0$. Now by Theorem \ref{thm:kra}, $g$ cannot be pseudo-Anosov, so $G$ cannot purely pseudo-Anosov, a contradiction. It must have been that $\ssp{j}{v} \neq \emptyset$.
\end{proof}

We can now prove the bound on the image of the hull subtree.
\begin{lemma} \label{lem:D0hull}
    There exists a constant $D_0^{\hh} \geq 0$ such that for any simplex $u \subset \cc{S^z}$, if $T^{\hh}$ is the hull subtree of $T^u \cap T^G$, then $\diam(p_0(T^{\hh})) \leq D_0^{\hh}$.
\end{lemma}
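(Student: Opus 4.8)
The plan is to run the argument of Lemma~\ref{lem:D0parallel} essentially verbatim; the single new ingredient needed is Lemma~\ref{lem:Lbarbound}, which compensates for the fact that, unlike in a parallel subtree, a hull subtree carries no \emph{a priori} guarantee that the subsurface projections $\ssp{j}{v}$ and $\ssp{k'}{v}$ are nonempty.

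First I would dispose of the degenerate case: if $T^u$ has finite diameter then Lemma~\ref{lem:multilemma}(2),(3) give $\diam(T^u)\le 1$, hence $\diam(T^\hh)\le 1$, and the bound is trivial for $D_0^\hh\ge 1$; so assume $u\subset\ccs{S^z}$ and set $v=\Phi(u)$. Fix a geodesic edge path $\gamma\subset T^\hh$, and let $\overline L$ be the constant of Lemma~\ref{lem:Lbarbound}. If $\len(\gamma)<\overline L$ we are done. If $\len(\gamma)\ge\overline L$, Lemma~\ref{lem:Lbarbound} yields $\ssp{j}{v}\ne\emptyset$ for \emph{every} component $\alpha_j$ of $\alpha$. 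I would then upgrade this to all complementary subsurfaces: since $v$ crosses every reducing curve, it crosses every boundary curve of every $Y_k$, so a geodesic arc of $v$ enters $Y_k$; such an arc is essential, because a geodesic arc with endpoints on the geodesic boundary of a hyperbolic surface cannot be boundary-parallel (that would force a geodesic bigon), and hence $\ssp{k'}{v}\ne\emptyset$ for every $Y_k$ as well.

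With all projections nonempty the rest is the proof of Lemma~\ref{lem:D0parallel}. Pass to $\gamma'\subset\gamma$ by removing the two end edges; every vertex of $\gamma'$ is then interior to $\gamma$, so has valence $\ge 2$ in $T^\hh$ and is not a leaf, whence Lemma~\ref{lem:Bhull}(1) gives $d(\dec{t},\ssp{t}{v})\le B^\hh$ for each vertex $t$ of $\gamma'$, and since $T^\hh$ properly contains the long path $\gamma'$ it is not a single edge, so Lemma~\ref{lem:Bhull}(2) gives $d(\dec{e},\ssp{e}{v})\le B^\hh$ for each edge $e$ of $\gamma'$. Thus for $j\in J$ every $\alpha_j$-dual edge of $\gamma'$ lies in $\widetilde{\mathcal{E}}_j(v,B^\hh)$ and for $k\in K$ every $Y_k$-dual vertex of $\gamma'$ lies in $\widetilde{\mathcal{V}}_k(v,B^\hh)$; Corollary~\ref{cor:Lbound} (applied to $\alpha_j$ directly, and to a chosen boundary curve $\alpha_j\subset\partial Y_k$) shows each of these sets meets every length-$L_0$ subsegment of $\gamma'$, so
$$\gamma'\ \subset\ \bigcap_{j\in J}N_{L_0}(\widetilde{\mathcal{E}}_j)\ \cap\ \bigcap_{k\in K}N_{L_0}(\widetilde{\mathcal{V}}_k).$$
Pushing forward by $p_0$ and then by the quasi-isometry $p_{JK}$ of Lemma~\ref{lem:qitosigma0}, and applying Lemma~\ref{lem:pjkbounded} with $B=B^\hh$ to bound each factor image $p_j(\mathcal{E}_j)$, $p_{k'}(\mathcal{V}_k)$ by a union of at most $E_j$, resp.\ $V_k$, sets of diameter $\le M$, I get $\diam\bigl(p_{JK}(p_0(\gamma'))\bigr)\le (E+V)(M+2L_0)$, hence $\diam(p_0(\gamma))\le\kappa(E+V)(M+2L_0)+\lambda+2$ in $\sigma_0$, exactly as in Lemma~\ref{lem:D0parallel}. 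Setting $D_0^\hh=\max\{\overline L,\ \kappa(E+V)(M+2L_0)+\lambda+2\}$ then bounds $\diam(p_0(\gamma))$ for every geodesic $\gamma\subset T^\hh$, and therefore $\diam(p_0(T^\hh))$.

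The main obstacle here was already isolated and resolved in Lemma~\ref{lem:Lbarbound} — the pigeonhole/Kra argument producing a pseudo-Anosov-violating $g\in G$ fixing a curve $u_0$ with $\Phi(u_0)=v$ — so the present lemma is mostly bookkeeping. The only points requiring care in the write-up are the implication ``$\ssp{j}{v}\ne\emptyset$ for all $j$'' $\Rightarrow$ ``$\ssp{k'}{v}\ne\emptyset$ for all $k$'' above, and the remark that interior vertices of a geodesic in $T^\hh$ are automatically non-leaves, so that Lemma~\ref{lem:Bhull} applies along all of $\gamma'$.
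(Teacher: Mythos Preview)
Your proof is correct and follows essentially the same approach as the paper's: split on whether Lemma~\ref{lem:Lbarbound} forces $\len(\gamma)<\overline{L}$, and in the remaining case run the proof of Lemma~\ref{lem:D0parallel} with $B^\hh$ in place of $B^{||}$, using Lemma~\ref{lem:Bhull} on the trimmed segment $\gamma'$. Your extra justifications---that $\ssp{j}{v}\ne\emptyset$ for all $j$ forces $\ssp{k'}{v}\ne\emptyset$ for all $k$, and that interior vertices of $\gamma$ are non-leaves so Lemma~\ref{lem:Bhull} applies---are points the paper passes over quickly, so including them is fine.
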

\begin{proof}
    Fix a simplex $u \subset \cc{S^z}$. Lemma \ref{lem:multilemma}(2) and (3) show that if $T^u$ has finite diameter, then it actually has diameter at most $1$, and the conclusion is trivial. We thus suppose $T^u$ has infinite diameter; in particular, $u$ only contains surviving curves. Thus, we have $u \subset \ccs{S^z}$, and we let $v = \Phi(u)$.

    Let $\gamma \subset T^{\hh}$ be a geodesic in the hull subtree. If there is some $\alpha_j \subset \alpha$ with $\ssp{j}{v} = \emptyset$, then Lemma \ref{lem:Lbarbound} implies that the length of $\gamma$ is strictly bounded above by $\overline{L}$, so
    $$ \diam(p_0(\gamma)) < \overline{L}. $$
    We proceed with the other case, where we have $\ssp{j}{v} \neq \emptyset$ for all $\alpha_j$, which also implies $\ssp{k'}{v} \neq \emptyset$ for all $Y_k$.
    
    Let $\gamma' \subset \gamma$ be the subsegment obtained by removing the edges on either end. By Lemma \ref{lem:Bhull}, for each edge $e \subset \gamma'$, we have $d(\dec{e}, \ssp{e}{v}) \leq B^\mathfrak{H}$. Similarly, for each $t \in \gamma'$, we have $d(\dec{t}, \ssp{t}{v}) \leq B^\mathfrak{H}$.

    Fix some $j \in J$. For each $\alpha_j$-dual edge $e \subset \gamma'$, we have that $e$ is twist, $\ssp{e}{v} \neq \emptyset$, and $d(\dec{e}, \ssp{e}{v}) \leq B^\hh$, meaning $e \in \widetilde{\mathcal{E}}_j(v,B^\hh)$. By Corollary \ref{cor:Lbound}, every length $L$ subsegment of $\gamma'$ must contain an $\alpha_j$-dual edge. Combining these two facts, the $L$-neighborhood of $\widetilde{\mathcal{E}}_j$ covers $\gamma'$. This argument holds for all $j \in J$, so
    $$ \gamma' \subset \bigcap_{j \in J} N_L(\widetilde{\mathcal{E}}_j). $$
    
    Similarly, fix some $k \in K$. For each $Y_k$-dual vertex $t \in \gamma'$, we have that $t$ is pseudo-Anosov, $\ssp{t}{v} \neq \emptyset$, and $d(\dec{t}, \ssp{t}{v}) \leq B^\hh$, meaning $t \in \widetilde{\mathcal{V}}_k(v,B^\hh)$. Choosing any curve $\alpha_j \subset \partial Y_k$, every length $L$ subsegment of $\gamma'$ must contain an $\alpha_j$-dual edge and thus a $Y_k$-dual vertex. The $L$-neighborhood of $\widetilde{\mathcal{V}}_k$ covers $\gamma'$. This argument holds for all $k \in K$, so
    $$ \gamma' \subset \bigcap_{k \in K} N_L(\widetilde{\mathcal{V}}_k). $$

    From here an identical argument to the one in Lemma \ref{lem:D0parallel} gives
    $$ p_0(\gamma) \leq \kappa(E+V)(M+2L) + \lambda + 2, $$
    where $(\kappa,\lambda)$ were the quasi-isometry constants of $p_{JK}$.

    In either case, setting $D_0^{\hh} = \max \Big( \overline{L},\kappa(E+V)(M+2L) + \lambda + 2 \Big)$ gives
    $$ \diam(p_0(\gamma)) \leq D_0^{\hh}. $$

    Since $\gamma$ was an arbitrary geodesic in $T^\hh$,
    $$ \diam(p_0(T^\hh)) \leq D_0^{\hh}. $$
\end{proof}

\subsection{Proof of Proposition \ref{prop:bounddownstairs}}
We will show that the proposition holds for $D_0 = D_0^\hh + 2 D_0^{||} + 2$.

Let $\tau,\tau' \in p_0(T^u \cap T^G)$ be any two vertices. There must be some $t,t' \in T^u \cap T^G$ such that $p_0(t) = \tau$ and $p_0(t') = \tau'$. Let $\gamma \subset T^u \cap T^G$ be the geodesic edge path between $t$ and $t'$. The geodesic $\gamma$ decomposes into at most 5 segments: 1 segment contained in the hull subtree $T^\hh$, 2 segments contained (different) parallel subtrees $T^{||}_1,T^{||}_2$, and 2 edges joining the segments in $T^{||}_1,T^{||}_2$ with the segment in $T^\hh$. Now $p_0(\gamma)$ is an edge path between $\tau$ and $\tau'$, and it decomposes into at most 5 segments: 1 segment contained $p_0(T^\hh)$, 2 segments contained in $p_0(T^{||}_i)$, and 2 edges joining the segments in $p_0(T^{||}_i)$ with the segment in $p_0(T^\hh)$. It follows from Lemma \ref{lem:D0hull} that the length of the segment in $p_0(T^\hh)$ is at most $D^\hh$, and it follows from Lemma \ref{lem:D0parallel} that the lengths of the segments in $p_0(T^{||}_i)$ are at most $D^{||}$. Thus, $\diam(p_0(\gamma)) \leq D_0^\hh + 2 D_0^{||} + 2 = D_0$. Since $\tau,\tau'$ were arbitrary, we have that $\diam(p_0(T^u \cap T^G)) \leq D_0$.

\bibliographystyle{amsalpha}
\bibliography{refs}

\end{document}